\documentclass[12pt]{elsarticle}

\usepackage[utf8]{inputenc}
\usepackage{xcolor}
\usepackage{multicol}
\usepackage[table]{xcolor}
\usepackage{subfigure}
\usepackage[utf8]{inputenc}
\usepackage{graphicx}
\usepackage{titlesec}
\usepackage[bookmarks,breaklinks,colorlinks=true,allcolors=blue]{hyperref}
\usepackage{listings}
\usepackage{inconsolata}
\usepackage{float}
\usepackage{arydshln}
\usepackage{enumitem}
\usepackage[normalem]{ulem}

\usepackage{geometry}
\usepackage{amsmath}
\usepackage{parskip}
\usepackage[official]{eurosym}
\usepackage{todonotes}
\usepackage{csquotes}
\usepackage[]{amsthm}
\usepackage[]{mathtools}
\usepackage[]{bm}
\usepackage[]{thmtools}
\usepackage{amssymb}
\usepackage{tikz}
\usetikzlibrary{automata,positioning,arrows} 
\titleformat{\section}[block]{\normalfont\Large\bfseries}{\thesection.}{.5em}{\Large}

\definecolor{USred}{cmyk}{0,1.00,0.65,0.34}

\newcommand{\cabeceraespecial}{%
    
    \normalfont\bfseries}
\declaretheoremstyle[
    spaceabove=\medskipamount,
    spacebelow=\medskipamount,
    headfont=\cabeceraespecial,
    notefont=\cabeceraespecial,
    notebraces={(}{)},
    bodyfont=\normalfont\itshape,
    postheadspace=1em,
    numberwithin=section,
    headindent=0pt,
    headpunct={.}
    ]{importante}
\declaretheoremstyle[
    spaceabove=\medskipamount,
    spacebelow=\medskipamount,
    headfont=\cabeceraespecial,
    notefont=\cabeceraespecial,
    notebraces={(}{)},
    bodyfont=\normalfont\itshape,
    postheadspace=1em,
    numberwithin=section,
    headindent=0pt,
    headpunct={.}
    ]{semiimportante}
\declaretheoremstyle[
    spaceabove=\medskipamount,
    spacebelow=\medskipamount,
    headfont=\normalfont\itshape,
    notefont=\normalfont,
    notebraces={(}{)},
    bodyfont=\normalfont,
    postheadspace=1em,
    numberwithin=section,
    headindent=0pt,
    headpunct={.}
    ]{normal}
\declaretheoremstyle[
    spaceabove=\medskipamount,
    spacebelow=\medskipamount,
    headfont=\normalfont\cabeceraespecial,
    notefont=\normalfont,
    notebraces={(}{)},
    bodyfont=\normalfont,
    postheadspace=1em,
    headindent=0pt,
    headpunct={.},
    numbered=no,
    qed=$\square$
    ]{demostracion}

\declaretheorem[name=Remark, style=semiimportante]{remark}
\declaretheorem[name=Corollary, style=semiimportante, numberlike=remark]{corollary}
\declaretheorem[name=Proposition, style=semiimportante, numberlike=remark]{proposition}

\declaretheorem[name=Example, style=normal, numberlike=remark]{example}

\declaretheorem[name=Theorem, style=importante, numberlike=remark]{theorem}
\declaretheorem[name=Definition, style=importante, numberlike=remark]{definition}

\declaretheorem[name=Proof, style=demostracion]{proof}

\def \dis {\displaystyle}

\def \into {\int_\Omega}
\def \confai {-\kern -.5em\rightharpoonup}

\def \ga {\gamma}
\def \Ga {\Gamma}

\def \ep {\varepsilon}
\def \om {\omega}
\def \Om {\Omega}
\def \la {\lambda}

\def \NN {\mathbb N}

\def \RR {\mathbb R}

\def \beq {\begin{equation}}
\def \eeq {\end{equation}}
\def \ba {\begin{array}}
\def \ea {\end{array}}

\def \ecart {\noalign{\medskip}}

\begin{document}

\begin{frontmatter}
	\title{Strong maximum principle for nonlocal diffusion in measure spaces: new characterization and population dynamics applications}
	%Competition model with interface]
	\author[1]{Ana Casado-Sánchez}\ead{acasado2@us.es}
	\author[2]{M\'onica Molina-Becerra} \ead{monica@us.es}
	\author[1]{Antonio Su\'arez\corref{cor}}
	\ead{suarez@us.es}\cortext[cor]{Corresponding author}
	
	\address[1]{Dpto. EDAN and IMUS. Univ. de Sevilla, Avd. Reina Mercedes, s/n, 41013,  Sevilla, Spain} 
	\address[2]{Dpto. Matem\'atica Aplicada II, Escuela Polit\'ecnica Superior and IMUS. Univ. de Sevilla, C/ Virgen de \'Africa, 7, 41011, Sevilla, Spain}

\begin{abstract}
We study the existence of nontrivial solutions to nonlocal semilinear diffusion equations of the form
	\[
	a_0(x)u(x)-\int_\Omega k(x,y)\big(u(y)-u(x)\big)\,d\mu(y)=F(x,u), \quad u\geq 0,
	\]
	on a general measure space $(\Omega,\mathcal{M},\mu)$, where the kernel $k$ is nonnegative and symmetric. In this framework, a major challenge is establishing the strict positivity of solutions. To this end, we provide a novel complete characterization of the strong maximum principle in terms of a connectivity structure induced by the kernel, which leads to a decomposition of the domain into independent components. We also develop the method of sub- and supersolutions in this setting, obtaining existence, uniqueness, and nonexistence results, as well as a regularity result that ensures the continuity of solutions under suitable assumptions. The main interest of working within an abstract measure space lies in its rich applicability, as it unifies and extends a wide variety of frameworks, including discrete, continuous, and hybrid models, or systems interacting across different dimensions. Finally, numerical experiments are presented to illustrate the potential of our theoretical results.
\end{abstract}

	\begin{keyword} Population dynamics, Nonlocal diffusion operators, Measure spaces, Strong maximum principle, Sub- and supersolutions, Principal eigenvalue
		\MSC[2026] 45C05, 45N05, 47J05, 92D25
	\end{keyword}
	
	%\keywords{Coupled systems, migration models, interchange of
		%flux, spatial heterogeneities}
\end{frontmatter}
\section*{Highlights}
\begin{itemize}
	\item We unify nonlocal diffusion models in a general measure space framework, including continuous, discrete, and hybrid cases as particular examples.
	\item The strong maximum principle is fully characterized by a kernel-induced \mbox{connectivity} partition.
	\item As applications, we study  some nonlocal semilinear equations arising in population dynamics and we perform numerical simulations.
\end{itemize}
\section{Introduction}
In this work, we study semilinear stationary nonlocal problems in which the diffusion operator is given by an integral kernel of the form
\[
\int_\Omega k(x,y)\big(u(y)-u(x)\big)\,d\mu(y),
\]
instead of a second-order elliptic differential operator, typically the Laplacian. More precisely, we analyze equations of the form
\begin{equation}\label{intro_problem}
	a_0(x)u(x)-\int_\Omega k(x,y)\big(u(y)-u(x)\big)\,d\mu(y)=F(x,u), 
	\quad u\geq 0,
\end{equation}
where $(\Omega,\mathcal{M},\mu)$ is a measure space, $k$ is a nonnegative and symmetric kernel, and $F$ is a Carathéodory function.

Problems of this type typically arise in population dynamics (see \cite{hutson}). In this context, $\Omega$ represents an ecosystem and $u(x)$ describes the density of individuals of a given population at the point $x$. Moreover, $\int_\Omega k(x,y)u(y)\,d\mu(y)$ represents the influx of individuals arriving at $x$, while $\int_\Omega k(x,y)u(x)\,d\mu(y)$ accounts for the individuals leaving $x$.

Our problem is stationary, corresponding to the equilibrium states of the associated parabolic evolution equation, which are of particular interest in understanding phenomena such as population persistence or extinction.

To deal with this type of problems and obtain nonnegative solutions, one of the most commonly used tools in the literature is the method of sub- and supersolutions. In the local case, fixed point techniques are also frequently employed; however, due to the lack of compactness of the linear operator, classical results such as Schauder’s fixed point theorem are not directly applicable in the nonlocal setting. Our goal is therefore to establish the method of sub- and supersolutions within our framework.

There is an extensive literature devoted to the nonlocal logistic equation in $\mathbb{R}^N$ with Lebesgue measure, both for convolution-type kernels (\cite{WenSun}, \cite{WenSun2}, \cite{Chinos}, \cite{Rossi}) and for more general kernels (see, for instance, \cite{Coville2}). In this work, we consider a more general nonlinearity within an abstract measure space setting. %This approach allows us to unify within a single framework several previously studied models. 
The primary interest of this approach lies in its capacity to unify very different scenarios under a single formulation. Although well-known techniques, such as the method of sub- and supersolutions, can be naturally adapted to this context, the main value of this generalization is its broad applicability. For instance, the following cases are included:
\begin{itemize}
	\item The classical continuous case in $\mathbb{R}^N$ with Lebesgue measure.
	\item The discrete case, where the kernel reduces to a matrix and the equation corresponds to a finite-dimensional system. In this setting, when the kernel is stochastic, the problem is related to stationary Markov chains with nonlinear dispersion.
	\item Hybrid models combining regions of different dimensions.
\end{itemize}
We also mention that measure spaces endowed with a metric have been considered in \cite{Silvia1}-\cite{Silvia3}. However, in our setting no distance is required. %Nevertheless, the scope of those works is different, since although they study the stationary linear problem, their main focus lies on the asymptotic behavior of evolutionary problems.

A fundamental aspect in the analysis of problems of the form \eqref{intro_problem} is the study of the associated linear operator. In the case of the nonlocal logistic equation, it is well known that the existence of positive solutions is determined by the sign of the principal eigenvalue (see, for instance, \cite{Coville2}-\cite{Rossi2}). Motivated by this fact, we study spectral properties in our general setting and analyze the existence of positive eigenfunctions associated with the principal eigenvalue in Theorems \ref{thespk0}, \ref{th:spectrumRa}, and \ref{thautovautof}.

In addition, in order to apply the method of sub- and supersolutions, maximum principles are required. We show that the weak maximum principle generally holds in our framework (see Theorem \ref{pmax}). However, to guarantee strict positivity of solutions, a strong maximum principle is needed.

One of the main contributions of this work is the characterization of the strong maximum principle in terms of the structure of the kernel $k$. To this end, we introduce a partition of the domain based on a notion of connectivity induced by the kernel, see Definitions \ref{eqrelmu}, \ref{omlN}, and \ref{defCEd1}. This construction allows us to decompose the global problem into independent subproblems on each equivalence class (see Theorem \ref{Thscdes}). These classes provide a notion of connectivity depending on whether individuals can move from one region to another.

In particular, we prove that the strong maximum principle holds if and only if the domain consists of a single equivalence class (see Corollary \ref{ppmafes}). Moreover, we show that this characterization improves standard assumptions in the literature, even in the case of a continuous kernel in $\RR^N$ with the Lebesgue measure (see Proposition \ref{hipgen} and Example~\ref{hipgen2}).

Once these tools are established, we develop the method of sub- and supersolutions in Theorem \ref{Teosubsup} and apply it to obtain existence and uniqueness results under suitable assumptions on the nonlinearity (see Theorem \ref{thexistence}). We also prove a nonexistence and a regularity result, showing that, under additional conditions, solutions are not only in $L^2(\Omega)$ but also continuous (Theorems \ref{thnoexistence} and \ref{thregularity}, respectively).

Finally, we apply our theory to the nonlocal logistic equation, obtaining existence and uniqueness results in each equivalence class. This leads to a rich structure of solutions, where regions with zero solution may coexist with regions where the solution is strictly positive.

To illustrate and validate the theoretical results, we include several numerical experiments in both the discrete and continuous settings. In the discrete case, we analyze the associated matrix problem and its relation to stationary Markov chains with nonlinear dispersion, observing how the structure of equivalence classes is reflected in the decomposition of the system and in the appearance of positive or null solutions depending on the sign of the principal eigenvalue.

In the continuous case, we implement the method of sub- and supersolutions, starting from a supersolution and constructing a monotone decreasing sequence converging to the solution. We also study the influence of different logistic-type nonlinearities, showing how the solution varies when modifying the exponent in models of the form $F(x,u)=\lambda u - \nu u^p$ with $\la\in\RR$, $\nu>0$, $p>1$.

Moreover, we present examples in hybrid domains of different dimensions, \mbox{illustrating} the potential of the proposed framework. In one example, we model a two-dimensional ecosystem separated by a one-dimensional structure, which can be interpreted as a river dividing a natural reserve. In another example, we consider two resource centers (cities) together with the territory connecting them, analyzing how the population distributes depending on the influence of each city and the support of the dispersal kernel.

Finally, we also include the Gompertz model within our framework. Although we have not found references addressing the nonlocal Gompertz equation, this model is widely used in biology to describe phenomena such as tumor growth (see, e.g., \cite{Gompertz1}, \cite{Gompertz2}). We show that it fits into our abstract setting and satisfies the hypotheses required to guarantee existence, uniqueness, and regularity of solutions. This further highlights the applicability of our approach beyond the classical logistic case.

To conclude, we briefly describe the structure of the paper. In Section 2 we introduce the functional framework and the main assumptions. In Section 3 we study the associated spectral problem. We analyze maximum principles, characterize the strong maximum principle via the kernel-induced partition and extend the method of sub- and supersolutions in Section 4. %In Section 5 we develop the method of sub- and supersolutions and obtain existence, uniqueness, and nonexistence results. 
Finally, we present numerical experiments illustrating the theoretical findings in Section 5.
\section{Problem setting}
Let $\Om$ be a set endowed with a $\sigma$-algebra $\cal{M}$ and a nonnegative measure $\mu$ defined on this $\sigma$-algebra. In the measure space $(\Om,\cal{M},\mu)$, we focus on the existence of a solution $u\in L_\mu^2(\Om)$ to the following a semilinear problem with a nonlocal diffusion term 
\beq\label{geneq}a_0(x)u(x)-\into k(x,y)\big(u(y)-u(x)\big)\,d\mu(y)=F(x,u),\quad u\geq0.\eeq
In this expression, $k$ satisfies
\beq\label{hipk}
\left\{\ba{l}
\dis k\in L_{\mu\otimes\mu}^2(\Om\times\Om),\quad k(x,\cdot)\in L^1_\mu(\Om)\ \hbox{ a.e. }x\in\Om, \\\ecart\dis
k(x,y)=k(y,x)\geq0,\ \hbox{ a.e. }x,y\in\Om,
\ea\right.
\eeq
$a_0\in L_\mu^\infty(\Om)$ is nonnegative and $F:\Om\times\RR\to\RR$ is a Carathéodory function, i.e. 
\beq\label{caratheodory}F(x,.)\in C^0(\RR),\ \hbox{a.e. }x\in\Om,\quad F(.,s)\ \hbox{ measurable }\forall\,s\in\RR,\eeq
verifying some further conditions that will be exposed later.

%These kinds of problems typically arise in population dynamics, where
%\begin{itemize}
%	\item $u$ represents the density of individuals in $\Omega$.
%	\item $k(x,y)$ measures the proportion of individuals moving from $y$ to $x$ per unit time.
%\end{itemize}
%As a result, the term
%$$\into k(x,y)\big(u(y)-u(x)\big)\,d\mu(y)$$
%corresponds to the difference between the inflow and outflow of individuals at almost everywhere point $x\in\Om$. 

By defining 
\beq\label{defa}\hat{k}:=\into k(x,y)\,d\mu(y),\quad a:=a_0+\hat k,\eeq
problem (\ref{geneq}) can be rewritten as
\beq\label{geneq2}a(x)u(x)-\into k(x,y)u(y)\,d\mu(y)=F(x,u),\quad u\geq0.\eeq
From now on, we denote by $L\in\mathcal{L}\big(L_\mu^2(\Om);L_\mu^2(\Om)\big)$ the operator
$$Lu:=a(x)u(x)-\into k(x,y)u(y)\,d\mu(y),\quad\forall u\in L_\mu^2(\Om).$$
\begin{remark}\label{rem:identk}
	The inverse of $L$ is not compact. As a consequence, Schauder's fixed point theorem does not apply. However, since $L$ is a particular case of Dirichlet form (see \cite{Fukushima}-\cite{Mosco}), it satisfies the maximum principle. Namely, this is a consequence of the following identity, which comes from the symmetry of $k$:
	\beq\label{identk}\ba{l}
	\dis\int_{\Om\times\Om}\hspace{-2mm}k(x,y)u(x)v(y)d\mu(x)d\mu(y)	=\into \hat kuv\,d\mu(x)\\\ecart\dis\quad\quad
	-\frac{1}{2}	\int_{\Om\times\Om}\hspace{-2mm}k(x,y)\big(u(x)-u(y)\big)\big(v(x)-v(y)\big)d\mu(x)d\mu(y),\quad\forall u,v\in L_\mu^2(\Om). 
	\ea
	\eeq
\end{remark} 
%In the next section, we focus on the linear case of (\ref{geneq2}), where we will be studying some results about spectral values, eigenvalues and eigenfunctions. Afterwards, the strong maximum principle will be studied to ensure the existence of strictly positive solutions. Finally, thanks to all these results we will be able to deal with the non linear case by applying the method of sub and supersolutions.
\section{The spectrum of the nonlocal operator $\boldmath L$}
In this section, we present some properties concerning the spectrum of the operator $L$ defined in the previous section. We start with the simple case where $k$ is the null function. In this case, the following result completely characterizes the spectrum of $L$.
%\begin{proposition}
%	There exists $\ga_0\in\RR$ such that for all $\ga>\ga_0$ it is satisfied:
%	\begin{itemize}
%		\item The operator $(L+\ga)u$ is coercitive, i.e. $\forall\, u\in L_\mu^2(\Om)^M$ it holds
%		\beq\label{hipcoe2} \alpha\into u^2d\mu(x)\leq \int_{\Om} (a+\ga)u^2 \,d\mu(x)-\int_{\Om\times \Om}k(x,y)u(x) u(y)\,d\mu(x)d\mu(y).\eeq
%		\item For every $f\in L_\mu^2(\Om)$, there exists a unique solution of
%		\beq\label{pblin}(L+\ga)u=f\ \hbox{ in }\Om.\eeq
%		%Moreover, if $f\geq0$ a.e. $\Om$, then $u\geq0$ a.e. $\Om$.
%		\item The operator $(L+\ga)^{-1}$ transforms nonnegative functions into nonnegative functions.
%	\end{itemize}
%\end{proposition}
%\begin{proof}
%	We start proving the first statement. Thanks to Remark \ref{rem:identk}, the inequality (\ref{hipcoe2}) is equivalent to
%	$$\alpha\into u^2d\mu(x)\leq \int_{\Om} (a+\ga-\hat k)u^2 \,d\mu(x)+\frac{1}{2}\int_{\Om\times \Om}\hspace{-2mm}k(x,y)\big(u(x)- u(y)\big)^2\,d\mu(x)d\mu(y).$$
%	For this to hold it suffices to take
%	$$\ga_0=\hbox{sup ess}\left\{\hat{k}(x)-a(x)\right\}.$$
%	This concludes the proof of coercitiveness. Thanks to said property, the existence and uniqueness of solution for (\ref{pblin}) just follows from Lax-Milgram's theorem.
%	
%	Finally, in order to prove the third statement, it suffices to prove that if $f$ is nonnegative in (\ref{pblin}), then the unique solution of the equation $u$ is also nonnegative. To show this, it is enough to multiply (\ref{pblin}) by $u^-$ and integrate in $\Om$, taking into account (\ref{identk}).
%\end{proof}
%The above result gives us a weak maximum principle. 
\begin{theorem}\label{thespk0}
	Assume that $k\equiv0$ and let $R_a$ (the range of $a$) denote the set defined as
	\beq\label{defrankA}
	\ba{l}
	R_a:=\Big\{\la\in\RR:\mu\Big(a^{-1}\big((\la-\ep,\la+\ep)\big)\Big)>0,\ \forall\, \ep>0\Big\}\\\ecart\dis\quad \ \ =\Big\{\la\in\RR:\frac{1}{|a-\la|}\notin L_\mu^\infty(\Om)\Big\}.
	\ea
	\eeq
	Then, $\sigma(L)$, the spectrum of $L$,  is equal to $R_a$. Moreover, for every $\la\in R_a$, we have:
	\begin{itemize}
		\item If $\mu(a^{-1}(\{\la\}))>0$, then $\la$ is an eigenvalue of $L$, whose eigenfunctions are the functions $u\in L^2_\mu(\Om)$ such that $u=0$ a.e. in $a^{-1}(\RR\setminus\{\la\})$.
		\item Otherwise, $\la$ belongs to the continuous spectrum of $L$.
	\end{itemize}
\end{theorem}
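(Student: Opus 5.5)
When $k\equiv0$ we have $\hat k=0$, so $a=a_0$ and $L$ is just multiplication by $a\in L^\infty_\mu(\Om)$. The plan is to treat $L-\la$ as the multiplication operator by $a-\la$ and use the standard description of the spectrum of a multiplication operator (the essential range), proving each piece directly.

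\textbf{Step 1: the two descriptions of $R_a$ agree.} Suppose $\mu\big(a^{-1}((\la-\ep,\la+\ep))\big)=0$ for some $\ep>0$. Then $|a-\la|\geq\ep$ a.e., so $1/|a-\la|\le 1/\ep$ and $1/|a-\la|\in L^\infty_\mu(\Om)$. Conversely, if $1/|a-\la|\le M$ a.e., then $|a-\la|\ge 1/M$ a.e., and the preimage of $(\la-1/M,\la+1/M)$ is null.

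\textbf{Step 2: $\RR\setminus R_a\subset\rho(L)$.} If $\la\notin R_a$, then $(a-\la)^{-1}\in L^\infty_\mu(\Om)$. Multiplication by $(a-\la)^{-1}$ is a bounded inverse of $L-\la$, so $\la$ is in the resolvent set. For nonreal $\la$ (if complex spectrum is meant) the same argument works with $|a-\la|\ge|\mathrm{Im}\,\la|$. Thus $\sigma(L)\subset R_a$.

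\textbf{Step 3: $R_a\subset\sigma(L)$, with the split into point and continuous spectrum.} Let $\la\in R_a$ and set $E=a^{-1}(\{\la\})$.

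If $\mu(E)>0$, then $(L-\la)u=(a-\la)u=0$ a.e. holds exactly when $u=0$ a.e. on $\{a\ne\la\}=a^{-1}(\RR\setminus\{\la\})$. A nonzero such $u\in L^2_\mu$ exists because $\mu(E)>0$: take $u=\chi_F$ with $F\subset E$ of finite positive measure. This gives the eigenvalue claim and the stated description of the eigenfunctions.

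If $\mu(E)=0$, then $L-\la$ is injective. To show it is not surjective with bounded inverse, set $A_n=a^{-1}\big((\la-1/n,\la+1/n)\big)$, which has positive measure. Take $u_n=\chi_{F_n}/\mu(F_n)^{1/2}$ with $F_n\subset A_n$ of finite positive measure. Then $\|u_n\|_2=1$ and $\|(L-\la)u_n\|_2\le 1/n$, so $L-\la$ has no bounded inverse and $\la\in\sigma(L)$.

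To place $\la$ in the continuous spectrum, I also show the range is dense. $L-\la$ is self-adjoint, so the orthogonal complement of its range is its kernel, which is $\{0\}$. Hence the range is dense, and since the operator is not invertible the range is not closed.

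\textbf{Main obstacle.} The main technical point is extracting subsets $F\subset E$ or $F_n\subset A_n$ of finite positive measure, so that the indicator functions lie in $L^2_\mu$. This needs $\mu$ to be semifinite, for instance $\sigma$-finite, which I will assume as implicit in the setting.

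Without this assumption, a set $A$ of positive but "infinite-only" measure gives no $L^2$ functions supported in it. The statement could then fail: for example, with $\mu(A)=\infty$ on an atom, $L^2$ sees nothing there. So I would add the standing $\sigma$-finiteness of $\mu$. Granting this, the rest is routine.
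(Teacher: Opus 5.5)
Your proposal is correct and follows essentially the paper's route: multiplication by $(a-\lambda)^{-1}$ inverts $L-\lambda$ off $R_a$, normalized indicators of positive-measure subsets of $a^{-1}\big((\lambda-\varepsilon,\lambda+\varepsilon)\big)$ rule out a bounded inverse on $R_a$, and the kernel of multiplication by $a-\lambda$ gives the eigenfunctions. Your semifiniteness assumption and your dense-range argument via self-adjointness make explicit two points the paper leaves implicit. The paper's choice $u=\chi_{A_\varepsilon}\in L^2_\mu$ silently needs $\mu(A_\varepsilon)<\infty$, and the continuous-spectrum claim is only gestured at through self-adjointness.
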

\begin{proof}
	Since $L$ is a continuous self-adjoint operator, we know that $R_a$ is a compact subset of $\RR$ and that the essential spectrum is empty (see \cite{Yosida}). 
	
	Let us prove that $R_a\subseteq\sigma(L)$.
	We consider $\la\in R_a$. Arguing by contradiction, if $\la$ does not belong to the spectrum of $L$, then $L
	-\la I$ is an isomorphism from $L_\mu^2(\Om)$ to $L_\mu^2(\Om)$ whose inverse is a continuous operator, i.e. there exists $C>0$ such that for all $u\in L_\mu^2(\Om)$ we have
	$$\|u\|_{L_\mu^2(\Om)}\leq C\big\|(L-\la I)u\big\|_{L_\mu^2(\Om)},\quad \forall\, u\in L_\mu^2(\Om).$$
	In particular, taking $u=\chi_{A_\ep}$ with 
	$$ A_\ep\subset a^{-1}\big((\la-\ep,\la+\ep)\big),\quad  \mu(A_\ep)>0,\ \ep>0$$
	then we obtain
	\beq\label{ineqcontra}\|u\|_{L_\mu^2(\Om)}^2=\mu(A_\ep)\leq C^2\int_{A_\ep} |a(x)-\la|^2d\mu(x)\eeq
	As a result,
	$$1\leq 2C^2\ep^2$$
	for all $\ep>0$, which leads to a contradiction, as $\varepsilon\to0$. 
	
	To prove the reverse inclusion, we have to prove that if $\la\in\RR\setminus R_a$, then $\la$ belongs to the resolvent of $L$, but this just follows from 
	$$\frac{1}{|a-\la|}\in L^\infty_\mu(\Om).$$
	To finish the proof, we assume $\la$ an eigenvalue of $L$ and $u\in L^2_\mu(\Om)\setminus\{0\}$ an eigenfunction corresponding to $L-\la I$. Then
	$$(a-\la)u=0\ \hbox{ a.e. }\Om$$
	and therefore
	$$u=0\ \hbox{ a.e. }a^{-1}(\RR\setminus\{\la\}).$$
	Since $u$ is not the null function, $\mu(a^{-1}(\la))>0$. Moreover, this shows that the \mbox{eigenfunctions} are the functions $u$ vanishing in $a^{-1}(\RR\setminus\{\la\})$.
\end{proof}
In the case where $k$ is not the null function, we have
\begin{theorem}\label{th:spectrumRa}
	The set $R_a$ defined in (\ref{defrankA})
	is contained in the spectrum of $L$. The \mbox{remaining} elements of the spectrum are composed by a family of eigenvalues $\la$ at most countable such that the corresponding eigenspace has finite dimension equal to the codimension of the range of $L-\la I$.
\end{theorem}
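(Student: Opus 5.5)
Split $L=M_a-K$, where $M_a u:=a u$ is the multiplication operator and
$$Ku(x):=\into k(x,y)u(y)\,d\mu(y).$$
Since $k\in L^2_{\mu\otimes\mu}(\Om\times\Om)$, $K$ is a Hilbert--Schmidt operator on $L^2_\mu(\Om)$. It is therefore compact, and self-adjoint by the symmetry of $k$. The operator $M_a$ is bounded and self-adjoint because $a\in L^\infty_\mu(\Om)$; note that $\hat k\in L^\infty_\mu(\Om)$ is implicitly needed for $L$ to be bounded, as the paper assumes. By Theorem \ref{thespk0} applied to the zero kernel, $\sigma(M_a)=R_a$. Moreover, every $\la\in R_a$ lies in the essential (Weyl) spectrum of $M_a$. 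Indeed, if $\mu(a^{-1}(\{\la\}))>0$ and the eigenspace $\{u:u=0 \hbox{ on } a^{-1}(\RR\setminus\{\la\})\}$ is infinite dimensional, this is immediate. In the other cases we build a Weyl sequence. We take sets $A_n\subset a^{-1}\big((\la-\ep_n,\la+\ep_n)\big)$ of positive measure that are pairwise disjoint (or, in the atom case, we handle things directly), and set $u_n=\chi_{A_n}/\mu(A_n)^{1/2}$. These are orthonormal, hence converge weakly to $0$, and $\|(M_a-\la)u_n\|\le\ep_n\to0$.

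The first claim, $R_a\subseteq\sigma(L)$, follows from Weyl's theorem: the essential spectrum is invariant under compact perturbations. Concretely, with $u_n$ as above we have $Ku_n\to0$ strongly because $K$ is compact and $u_n\rightharpoonup0$. Hence $\|(L-\la)u_n\|\le \|(M_a-\la)u_n\|+\|Ku_n\|\to0$ with $\|u_n\|=1$, and so $L-\la I$ cannot have a bounded inverse.

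For $\la\notin R_a$, the operator $M_a-\la I$ is invertible with bounded inverse $(a-\la)^{-1}$, by the characterization in (\ref{defrankA}). We write
$$L-\la I=(M_a-\la I)\big(I-(M_a-\la I)^{-1}K\big).$$
Here $T_\la:=(M_a-\la I)^{-1}K$ is compact. By the Riesz--Schauder (Fredholm alternative) theory, $I-T_\la$ is Fredholm of index zero. Consequently $L-\la I$ is Fredholm of index zero, so $\dim\ker(L-\la I)=\operatorname{codim}\operatorname{Ran}(L-\la I)<\infty$, and $\la\in\sigma(L)$ if and only if $\la$ is an eigenvalue. (Self-adjointness also gives $\operatorname{Ran}(L-\la I)^\perp=\ker(L-\la I)$ directly.)

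Countability is the step I expect to be the main obstacle. The plan is to use the analytic Fredholm theorem on each connected component $U$ of the open set $\mathbb{C}\setminus R_a$. On $U$, the map $\la\mapsto T_\la$ is analytic with compact values. Each real component meets the non-real half-plane, where $L-\la I$ is invertible because $L$ is self-adjoint, so $I-T_\la$ is invertible somewhere in $U$. The analytic Fredholm theorem then says the set of $\la\in U$ where $I-T_\la$ fails to be invertible is discrete in $U$. An open subset of $\mathbb{C}$ has countably many components, and a discrete subset of an open set is countable. Hence the eigenvalues outside $R_a$ form an at most countable set, which can accumulate only at $R_a$. Alternatively, one can avoid complex analysis: eigenvectors of the self-adjoint $L$ for distinct eigenvalues are orthogonal, and $L^2_\mu(\Om)$ is separable when, for instance, $\mathcal M$ is countably generated. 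The analytic route, however, needs no separability assumption.
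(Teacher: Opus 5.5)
\textbf{The gap.} The step that fails is your parenthetical ``in the atom case, we handle things directly''. It cannot be repaired, because in that case the first assertion of the theorem is false.

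Suppose $\lambda$ is an isolated point of $R_a$ and $a^{-1}(\{\lambda\})$ is, up to a null set, a finite union of atoms. Then $\lambda$ is an isolated eigenvalue of finite multiplicity of $M_a$. So it is not in $\sigma_{\mathrm{ess}}(M_a)$, and a compact perturbation can push it out of the spectrum.

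A concrete example: take $\Omega=\{1,2\}$ with counting measure, $k(1,2)=k(2,1)=1$, $k(1,1)=k(2,2)=0$ and $a_0=0$. All of (\ref{hipk}) holds and $a=\hat k\equiv 1$, so $R_a=\{1\}$. But $Lu=(u_1-u_2,\,u_2-u_1)$ has spectrum $\{0,2\}$. The same failure occurs generically in the finite-dimensional setting the paper itself advertises: there $R_a=\{a_1,\dots,a_n\}$, while $\sigma(L)$ consists of the eigenvalues of $\operatorname{diag}(a)-k$.

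What your Weyl-sequence argument actually proves is $\sigma_{\mathrm{ess}}(M_a)\subseteq\sigma(L)$. This equals $R_a$ when $\mu$ is atomless, and you should state the result in that form rather than claim the atomic case can be done by hand. The paper's proof has exactly the same hidden restriction. It needs sets $A_\varepsilon\subset a^{-1}((\lambda-\varepsilon,\lambda+\varepsilon))$ with $0<\mu(A_\varepsilon)\to 0$, and these do not exist in the example above.

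\textbf{Comparison with the paper.} Apart from this, your argument is correct and takes a different route.

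For $R_a\subseteq\sigma(L)$, the paper tests $L-\lambda I$ directly on $\chi_{A_\varepsilon}$ and controls the kernel term by Cauchy--Schwarz, which forces test sets of small measure. Incidentally, the resulting integral there should be over $\Omega\times A_\varepsilon$, not $A_\varepsilon\times A_\varepsilon$. Your Weyl sequences need only weak convergence to $0$ and compactness of $K$. They therefore also cover situations the written argument misses, for example infinitely many unit-mass atoms whose $a$-values accumulate at $\lambda$.

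The Fredholm step for $\lambda\notin R_a$ is the same idea in both proofs.

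For countability, the paper only cites that a self-adjoint operator has at most countably many eigenvalues, which relies on separability of $L^2_\mu(\Omega)$. Your analytic Fredholm argument gives discreteness of $\sigma(L)\setminus R_a$ in $\mathbb{C}\setminus R_a$ without that assumption. One simplification: since $R_a$ is a compact subset of $\mathbb{R}$, $\mathbb{C}\setminus R_a$ is connected, so you need not discuss components.
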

\begin{proof}
To prove that $R_a\subseteq\sigma(L)$, we argue by contradiction in a similar way than in the proof of Theorem \ref{thespk0} in order to obtain \eqref{ineqcontra}. In this case, however, an additional term appears, and we arrive at
	$$1\leq 2C^2\Big(\ep^2+\int_{A_\ep\times A_\ep}|k(x,y)|^2d\mu(x)d\mu(y)\Big)$$
	Taking $A_\ep$ such that $\mu(A_\ep)\to0$, the right-hand side of the inequality tends to zero when $\ep\to0$, which leads to a contradiction. This shows that $R_a$ is contained in the spectrum of $L$. 
	
	If $\la\in\sigma(L)\setminus R_a$, as in the proof of Theorem \ref{thespk0}, $\exists\ep>0$ such that $|a(x)-\la|\geq \ep$ a.e. $\Om$. This implies that the operator $L-\la I$ 
	consists of a compact perturbation of the operator $u\to (a-\la)u$, which is bijective and continuous with continuous inverse in $L_\mu^2(\Om)$. Thanks to this, Fredholm's alternative theorem applies to obtain that $\la$ is an eigenvalue whose associated eigenspace has finite dimensional equal to the codimension of the range. 
	
	On the other hand, since $L$ is self-adjoint, then it is known (see \cite{Yosida}) the amount of eigenvalues is at most countable.
\end{proof}
From now on, we denote the principal spectral value of $L$ by
\beq\label{defautovppal}\ba{l}
\dis\la_p(L):= \operatorname*{ess\,inf}_{u\not \equiv0}\, \frac{\dis\left <Lu,u\right >_{L^2_\mu(\Om)}}{\|u\|_{L_\mu^2(\Om)}^2}\\\ecart\dis
\qquad\quad=\operatorname*{ess\,inf}_{u\not \equiv0}\, {\dis\into au^2d\mu(x)-\int_{\Om\times\Om} k(x,y)u(y)d\mu(y) \,u(x)d\mu(x)\over \|u\|_{L_\mu^2(\Om)}^2}.\ea\eeq
As a consequence of Theorem \ref{th:spectrumRa}, the following result holds
\begin{theorem}\label{thautovautof}
	The following inequality is satisfied:
		$$\la_p(L)\leq\operatorname*{ess\,inf}\, a.$$
	Moreover, if $\la_p(L)<\operatorname*{ess\,inf}\, a$, then $\la_p(L)$ is an eigenvalue. Furthermore, for every eigenfunction $u\in L^2_\mu(\Om)$ corresponding to $\la_p(\Om)$, we have $|u|$, $u^+$ and $u^-$ are also eigenfunctions associated with $\la_p(\Om)$
\end{theorem}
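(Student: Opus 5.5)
The plan is to obtain the three assertions from the variational characterization \eqref{defautovppal}, Theorem \ref{th:spectrumRa}, and the identity \eqref{identk}.

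\textbf{Inequality $\la_p(L)\le\operatorname*{ess\,inf} a$.} Here I use the same test functions as in the proof of Theorem \ref{thespk0}. Let $m=\operatorname*{ess\,inf} a$. For $\ep>0$ choose $A_\ep\subset a^{-1}((m-\ep,m+\ep))$ with $0<\mu(A_\ep)<\infty$, and put $u=\chi_{A_\ep}$. Then
$$\frac{\left<Lu,u\right>}{\|u\|^2}\le m+\ep-\frac{1}{\mu(A_\ep)}\int_{A_\ep\times A_\ep}k\,d\mu\,d\mu\le m+\ep,$$
because $k\ge0$. Letting $\ep\to0$ gives the inequality. (If $\mu$ is not $\sigma$-finite on the relevant set, I take a subset of finite positive measure; this is a minor technical point.)

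\textbf{$\la_p(L)$ is an eigenvalue when $\la_p(L)<m$.} Since $L$ is bounded and self-adjoint, $\la_p(L)=\inf\sigma(L)$ and belongs to $\sigma(L)$. The hypothesis $\la_p(L)<m$ means $\la_p(L)\notin R_a$, since $R_a\subset[m,\infty)$. By Theorem \ref{th:spectrumRa}, every point of $\sigma(L)\setminus R_a$ is an eigenvalue with finite-dimensional eigenspace, so $\la_p(L)$ is an eigenvalue. A more robust alternative I can use in parallel goes through the minimizing sequence directly. Write $L=(a-\la_p)I - K$ plus a shift, where $K$ is the integral operator, which is Hilbert–Schmidt and hence compact since $k\in L^2$. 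Take a normalized minimizing sequence $u_n\rightharpoonup u$. The quadratic form with the compact part converges, and $a-\la_p\ge m-\la_p>0$ gives coercivity, so $u\ne0$ is a minimizer. The Euler–Lagrange equation then shows that $u$ is an eigenfunction.

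\textbf{$|u|$, $u^\pm$ are eigenfunctions.} Let $u$ be an eigenfunction. By \eqref{identk}, the Rayleigh numerator equals
$$Q(u)=\into (a-\hat k)u^2\,d\mu+\frac12\int_{\Om\times\Om}k(x,y)(u(x)-u(y))^2\,d\mu\,d\mu.$$
Since $\big||u(x)|-|u(y)|\big|\le|u(x)-u(y)|$ and $k\ge0$, we get $Q(|u|)\le Q(u)=\la_p\|u\|^2=\la_p\||u|\|^2$. So $|u|$ is also a minimizer of the Rayleigh quotient. A minimizer of the Rayleigh quotient of a bounded self-adjoint operator at the bottom of the spectrum satisfies $Lv=\la_p v$. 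This follows from the first variation: $\frac{d}{dt}\big[Q(v+tw)-\la_p\|v+tw\|^2\big]_{t=0}=0$ for all $w$ gives $\left<Lv-\la_p v,w\right>=0$. Hence $|u|$ is an eigenfunction. By linearity, $u^+=(|u|+u)/2$ and $u^-=(|u|-u)/2$ lie in the eigenspace. If one of them is zero it is not strictly an eigenfunction, and I would phrase that case as "belongs to the eigenspace".

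\textbf{Main obstacle.} The delicate step is the eigenvalue claim, because it needs $\la_p(L)$ to lie outside $R_a$ and the Fredholm argument of Theorem \ref{th:spectrumRa} to apply. If the citation of Theorem \ref{th:spectrumRa} feels too thin, I would fall back on the compactness and minimizing-sequence argument described above.
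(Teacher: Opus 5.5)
Your proof is correct. For the eigenvalue claim and for $|u|$, $u^{\pm}$ you follow the paper. The eigenvalue claim comes from $\la_p(L)\notin R_a$ together with the Fredholm part of Theorem~\ref{th:spectrumRa}, and the claim about $|u|$ comes from $Q(|u|)\le Q(u)$ via \eqref{identk}. Your first-variation argument, and your caveat that $u^{+}$ or $u^{-}$ may vanish (and then only lies in the eigenspace), make explicit what the paper leaves implicit.

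The real difference is the inequality $\la_p(L)\le\operatorname*{ess\,inf} a$. The paper obtains it from $\la_p(L)=\min\sigma(L)$ and the inclusion $R_a\subseteq\sigma(L)$ of Theorem~\ref{th:spectrumRa}. You instead test the Rayleigh quotient on $\chi_{A_\ep}$ and drop the kernel term using only $k\ge0$. The paper's route is a one-liner once Theorem~\ref{th:spectrumRa} is available; yours is self-contained and more robust.

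The extra robustness matters because the proof of $R_a\subseteq\sigma(L)$ needs sets $A_\ep$ with $0<\mu(A_\ep)\to0$. Such sets need not exist when $\mu$ has atoms, and the inclusion can then genuinely fail. Take $\Om=\{1,2\}$ with counting measure, $k_{12}=k_{21}=1$, $k_{11}=k_{22}=0$ and $a_0=0$. Then $a\equiv1$ and $R_a=\{1\}$, but $\sigma(L)=\{0,2\}$. Your inequality argument remains valid in such discrete or hybrid settings. So does your eigenvalue argument, which only uses $\la_p(L)\in\sigma(L)$ and the Fredholm half of Theorem~\ref{th:spectrumRa}.

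One correction to your parenthetical: picking a subset of finite positive measure is a semifiniteness assumption, not a mere technicality. The paper makes it tacitly too. Without it the inequality itself can fail. Take $k\equiv0$, $\Om=\{1,2\}$, $\mu(\{1\})=\infty$, $\mu(\{2\})=1$ and $a_0=(0,5)$. Then membership in $L^2_\mu(\Om)$ forces $u(1)=0$, so $\la_p(L)=5>0=\operatorname*{ess\,inf}a$.
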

\begin{proof}
	Thanks to Theorem \ref{th:spectrumRa}, we know that $R_a$ is contained in the spectrum of $L$, which implies that
	$$\la_p(L)=\min\left\{\hbox{sp}(L)\right\}\leq \min\left\{R_a\right\}= \operatorname*{ess\,inf}\left\{a\right\}.$$
	Moreover, if $\la_p(L)<\operatorname*{ess\,inf}\, a$, then $\la_p(L)\notin R_a$. Hence, Theorem \ref{th:spectrumRa} 
	shows that $\la_p(L)$ is an eigenvalue.
	
	We observe that for every $u\in L^2_\mu(\Om)$, we have
	$$\ba{l}
	\dis \into L|u||u|d\mu(x)
	=\into a_0|u|^2 d\mu(x)+\frac{1}{2}\int_{\Om\times\Om}k(x,y)\big(|u(x)|-|u(y)|\big)^2d\mu(x)d\mu(y)\\ \ecart\dis
	\leq \into a_0|u|^2 d\mu(x)+\frac{1}{2}\int_{\Om\times\Om}k(x,y)\big(u(x)-u(y)\big)^2d\mu(x)d\mu(y)
	= \into Luu\,d\mu(x).
	\ea$$
	Then, taking into account that $u$ is an eigenfunction if and only if it attains the minimum in (\ref{defautovppal}), we conclude that $|u|$ is also an eigenfunction, since it yields the same minimum value. 
	
	To conclude, it is enough to use that
	$$u^+=\frac{|u|+u}{2},\quad u^-=\frac{|u|-u}{2},$$
	which implies that $u^+$ and $u^-$ are eigenfunctions if $u$ is an eigenfunction.
\end{proof}

We have finished our study of the spectrum of $L$ with the result above. In the next section, we focus on characterizing the strong maximum principle which will be of the utmost importance to ensure the existence of strictly positive solutions.

\section{A full characterization of the strong maximum principle}
In this section, we prove the existence of a partition of $\Om$ into regions where problem (\ref{geneq2}) can be solved independently and where the strong maximum principle holds. This improves the results obtained by other authors (for example \cite{Rossi2},\cite{Coville}) even in the case where $\Om=\RR^N$, $k$ is continuous and $\mu$ is the Lesbesgue measure, as assumed in these papers. From a modeling perspective, it also enables us to consider different areas of the ecosystem whose connectivity depends on $k$.

First, we recall again that the weak maximum principle holds for the operator $L$. Although the result is well known, we include a short proof for the reader’s convenience:
\begin{theorem}\label{pmax}
	If $\la_p(L)>0$, it holds
	\beq\label{ec:pmax}u\in L^2_\mu(\Om), \quad Lu\geq0\ \hbox{a.e. }\Om\Rightarrow u\geq0\ \hbox{a.e. }\Om.\eeq
\end{theorem}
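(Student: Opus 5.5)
The plan is to test the inequality $Lu\geq 0$ against the negative part $u^-=\max\{-u,0\}$ and combine the result with the variational characterization (\ref{defautovppal}) of $\la_p(L)$. Write $u=u^+-u^-$; both parts lie in $L^2_\mu(\Om)$ and $u^+u^-=0$ a.e. Since $Lu\geq0$ and $u^-\geq0$ a.e., we have
$$0\leq\into (Lu)\,u^-\,d\mu(x)=\into (Lu^+)\,u^-\,d\mu(x)-\into (Lu^-)\,u^-\,d\mu(x).$$

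Next, compute the cross term. Because $u^+u^-=0$ a.e., the multiplicative part $a u^+u^-$ vanishes. What remains is
$$\into (Lu^+)u^-\,d\mu(x)=-\int_{\Om\times\Om}k(x,y)u^+(y)u^-(x)\,d\mu(x)d\mu(y)\leq 0,$$
using $k\geq0$ from (\ref{hipk}). This integral is finite since $k\in L^2_{\mu\otimes\mu}$ and $u^\pm\in L^2_\mu$. Substituting into the first inequality gives
$$\into (Lu^-)u^-\,d\mu(x)\leq \into (Lu^+)u^-\,d\mu(x)\leq 0.$$

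Finally, use the definition of $\la_p(L)$ as the infimum of the Rayleigh quotient. It yields
$$\la_p(L)\|u^-\|^2_{L^2_\mu(\Om)}\leq \into (Lu^-)u^-\,d\mu(x)\leq0.$$
Since $\la_p(L)>0$, this forces $\|u^-\|_{L^2_\mu(\Om)}=0$, i.e. $u\geq0$ a.e. in $\Om$.

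No step is a genuine obstacle; the only point requiring care is the sign of the cross term, which rests on the disjointness of the supports of $u^\pm$ and the nonnegativity of $k$. As an alternative, one could use the identity (\ref{identk}) to write $\into (Lu)u^-\,d\mu$ as a Dirichlet form. That route requires the elementary inequality $(u(x)-u(y))(u^-(x)-u^-(y))\leq-(u^-(x)-u^-(y))^2$ and reaches the same conclusion.
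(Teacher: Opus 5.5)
Your proof is correct. It shares the paper's skeleton: test $Lu\geq0$ against $u^-$, then use the Rayleigh quotient to get $\la_p(L)\|u^-\|^2_{L^2_\mu(\Om)}\leq\into (Lu^-)u^-\,d\mu\leq0$. The middle step, however, is done differently.

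The paper follows the route you list as your alternative. It rewrites $-\into (Lu)u^-\,d\mu$ through the energy identity (\ref{identk}) and applies the pointwise inequality to the kernel term. It then recognizes the result as $\into (Lu^-)u^-\,d\mu$ by a second use of (\ref{identk}), as in the proof of Theorem \ref{thautovautof}.

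Expanding with $u=u^+-u^-$ shows what that inequality step does: it discards the nonnegative quantity $-\into (Lu^+)u^-\,d\mu=\int_{\Om\times\Om}k(x,y)u^+(y)u^-(x)\,d\mu(x)d\mu(y)$, which is exactly your cross term. So both proofs drop the same term, and they differ only in how its sign is certified.

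You certify it directly from linearity, the disjoint supports of $u^\pm$ (which kill $au^+u^-$), and $k\geq0$. This is shorter and never uses the symmetry of $k$. Your argument therefore goes through verbatim for nonnegative nonsymmetric kernels, with $\la_p(L)$ still defined by the quadratic form. The paper's version needs symmetry through (\ref{identk}). In exchange, it stays within the Dirichlet-form framework of Remark \ref{rem:identk} and reuses the energy computation already used for $|u|$ in Theorem \ref{thautovautof}.
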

\begin{proof}
	Let $u$ be in the conditions of the left-hand side of (\ref{ec:pmax}). By (\ref{identk}) and the definition (\ref{defautovppal}) of $\la_p(\Om)$, similarly to the proof of Theorem \ref{thautovautof}, we have
	$$\ba{l}
	\dis0\geq -\into Luu^-d\mu(x)\\\ecart\dis
	=-\into a_0uu^- d\mu(x)-\frac{1}{2}\int_{\Om\times\Om}k(x,y)\big(u(x)-u(y)\big)\big(u^-(x)-u^-(y)\big)d\mu(x)d\mu(y)\\ \ecart\dis
	\geq	\into Lu^-u^-d\mu(x)\geq \lambda_p(L) \into |u^-|^2d\mu(x).
	\ea$$
	This proves that $u^-$ is the null function and therefore $u$ is nonnegative.
\end{proof}
From now on, we assume $k$ satisfies (\ref{hipk}) together with the following assumption: there exists a family of measurable sets $E_i\in\cal{M}$, $i\in I\subseteq\NN$, and a set $J\subseteq\NN\times\NN$ such that it holds
\beq\label{hipkeqgen}
\left\{\ba{l}
 \mu\left(E_i\cap E_j\right)=0\hbox{ if }i\neq j,\quad\forall(i,j)\in\NN\times\NN,\\\ecart\dis
(i,j)\in J \Rightarrow(j,i)\in J,\\\ecart\dis
\forall i\in I \ \exists j\in\NN \hbox{ such that }(i,j)\in J, \\\ecart\dis
\mu\Big(\left\{(x,y)\in\Om\times\Om : k(x,y)>0\right\} \triangle \big(\cup_{(i,j)\in J}E_i\times E_j\big)\Big)=0,
\ea\right.
\eeq
where $\triangle$ denotes the symmetric difference of sets. In this framework, we introduce the following relation:
\begin{definition}\label{eqrelmu}
	Let $i,j\in\NN$. We say that $i$ is related to $j$, and we denote this by $i\stackrel{\cal R}\sim j$, if there exists a sequence of index $t_k\in\NN$, $0\leq k\leq n$ such that $t_0=i$, $t_n=j$ and $(t_k,t_{k+1})\in J$ for all $k\in \{0,...,n-1\}.$
\end{definition} 
We have:
\begin{proposition}\label{proeqrelmu}
	The relation $\mathcal{R}$ given by Definition \ref{eqrelmu} is an equivalence relation on $\NN$.
\end{proposition}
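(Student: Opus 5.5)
The plan is to check the three axioms directly from Definition \ref{eqrelmu}, using only the combinatorial part of (\ref{hipkeqgen}), namely the symmetry condition $(i,j)\in J\Rightarrow (j,i)\in J$. The measure-theoretic conditions on the sets $E_i$ and on $k$ play no role in this statement; they matter only later, when the classes are used to partition $\Om$.

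\emph{Reflexivity.} I would read Definition \ref{eqrelmu} as allowing chains of length $n=0$. For each $i\in\NN$ the one-term sequence $t_0=i=t_n$ then satisfies the definition, since the condition on consecutive pairs is vacuous. If one insists on $n\geq 1$, reflexivity still holds for every $i\in I$. Indeed, (\ref{hipkeqgen}) gives some $j$ with $(i,j)\in J$, and by symmetry $(j,i)\in J$. Hence $t_0=i$, $t_1=j$, $t_2=i$ is an admissible chain. The same argument works for any index appearing in some pair of $J$. For indices appearing in no pair of $J$, the convention $n=0$ is exactly what is needed, and these indices then form singleton classes. I expect this convention to be the only delicate point of the proof, and I would state it explicitly.

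\emph{Symmetry.} Suppose $i\stackrel{\cal R}\sim j$ through a chain $t_0=i,\dots,t_n=j$. Define the reversed chain $s_k:=t_{n-k}$ for $0\leq k\leq n$. Then $s_0=j$, $s_n=i$, and $(s_k,s_{k+1})=(t_{n-k},t_{n-k-1})$. This pair lies in $J$ because $(t_{n-k-1},t_{n-k})\in J$ and $J$ is symmetric by the second line of (\ref{hipkeqgen}). Therefore $j\stackrel{\cal R}\sim i$.

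\emph{Transitivity.} Suppose $i\stackrel{\cal R}\sim j$ via $t_0,\dots,t_n$ and $j\stackrel{\cal R}\sim l$ via $r_0,\dots,r_m$. Concatenate the two chains by setting $u_k=t_k$ for $0\leq k\leq n$ and $u_{n+k}=r_k$ for $0\leq k\leq m$. The two definitions agree at $k=n$, since $t_n=j=r_0$. Every consecutive pair of $(u_k)$ is a consecutive pair of one of the original chains, so it belongs to $J$. Hence $i\stackrel{\cal R}\sim l$, which completes the verification.
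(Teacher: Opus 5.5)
Your proof is correct and follows the paper's argument. The paper also gets reflexivity from the chain $i,j,i$ built from the third and second lines of \eqref{hipkeqgen}, symmetry by reversing a chain, and transitivity by concatenation. Your treatment of reflexivity is in fact more careful: the paper asserts a partner $j$ with $(i,j)\in J$ for every $i\in\NN$, whereas \eqref{hipkeqgen} only provides one for $i\in I$, and your length-zero chain convention is exactly what closes that gap.
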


\begin{proof}
	We need to prove that the following properties hold: 
	\begin{itemize}
		\item Reflexivity. Let $i\in\NN$. Then, there exists $j\in\NN$ such that $(i,j)\in J$, which implies $(j,i)\in J$. Then, we can choose the sequence of index $t_0=i$, $t_1=j$, $t_2=i$, showing that $i\stackrel{\cal R}\sim i$.
		\item Symmetry. If $i\stackrel{\cal R}\sim j$ and $t_0,\hdots,t_n$ is a sequence of index connecting $i$ and $j$, it is trivial again by symmetry of $J$ that the reversed sequence $t_n,\hdots,t_0$ connects $j$ and $i$. As a result, $j\stackrel{\cal R}\sim i$.
		\item Transitivity. If $i\stackrel{\cal R}\sim j$ through $t_0,..,t_n$ and $j\stackrel{\cal R}\sim z$ 
		through $q_0,..,q_m$, then $j\stackrel{\cal R}\sim z$ through the concatenated sequence $t_0,..,t_n,q_1,...,q_m$.
	\end{itemize}
	This implies that $\cal{R}$ is an equivalence relation on $\NN$ which concludes the proof.
\end{proof}

Thanks to Proposition \ref{proeqrelmu}, we can define
\begin{definition}\label{omlN}
	For every class $I_l$ of related index, we define
	$$\Om_l=\bigcup_{i\in I_l}E_i,\quad\forall l\geq1$$
	and we denote by $\mathcal{N}$ the following set
	$$\mathcal{N}=\Om\setminus\bigcup_{l\geq1}\Om_l.$$
\end{definition}

	The sets $\Om_l$ with $l\geq1$ form a partition of $\Om\setminus \mathcal{N}$ thanks to (\ref{hipkeqgen}) and Proposition~\ref{proeqrelmu}. We present a simple example where we can contruct a family of measurable sets and pairs of index of the form \eqref{hipkeqgen}:
	
\begin{example}\label{ejmonica}
	Assume $\Om=[0,3]$, $\mu$ the Lebesgue's measure and $k$ defined as
	
	\noindent\begin{minipage}{0.69\textwidth}
		\[
		k(x,y)=\left\{
		\begin{array}{l}
			1,\quad \hbox{if }(x,y)\in[2,3]\times[2,3],\\\ecart\dis
			1,\quad \hbox{if }(x,y)\in[0,1]\times[1,2]\cup[1,2]\times[0,1],\\\ecart\dis
			0,\quad \hbox{in other case}.
		\end{array}
		\right.
		\]
	\end{minipage}
	\hfill
	\begin{minipage}{0.3\textwidth}
		\begin{figure}[H] 
			\centering
			\includegraphics[width=\textwidth]{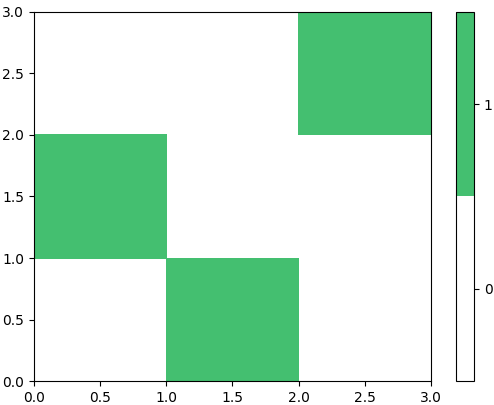}
			\vspace{-7mm} 
			\caption{$k(x,y)$.} 
			\end{figure}
	\end{minipage}
	In this case, we define $E_1=[0,1]$, $E_2=[1,2]$, $E_3=[2,3]$ and $J$ as
	$$J=\{(1,2),(2,1),(3,3)\}.$$
	We observe that these sets satisfy \eqref{hipkeqgen}. Therefore, the equivalence relation given by Proposition \ref{proeqrelmu}, allows us to define the following sets using Definition \ref{omlN}:
	$$\Om_1=[0,2],\quad\Om_2=[2,3],\quad\mathcal{N}=\emptyset.$$
\end{example}
	In Definition \ref{omlN}, each $\Om_l$ represents an isolated region with no interaction with the rest of the domain. This statement will be a consequence of Theorem \ref{Thscdes}, where we show that problems of the form
\beq\label{ecuesc}
au-\into k(x,y)u(y)\,d\mu(y)=f,\quad\hbox{a.e. }\Om,
\eeq
can be solved independently on each $\Om_l$ and in $\mathcal{N}$. 

%A similar result will hold in the semilinear case (\ref{geneq2})** mejor poner un remark:
\begin{theorem}\label{Thscdes}
	We consider $f\in L_\mu^2(\Om)$ and we assume $\la_p(L)>0$. For each $\Om_l$, $l\geq1$, we denote by $u_l$ the solution of the problem 
	\beq\label{pbCE}au_l-\int_{\Om_l} k(x,y)u_l(y)\,d\mu(y)=f,\quad\hbox{a.e. }\Om_l.\eeq
	Then, the solution of problem (\ref{ecuesc}) is given by the function $u$ defined by
	\beq\label{pbCEb} u(x)=\left\{\ba{ll}
	\dis u_l(x) &\hbox{ a.e. }x\in \Om_l,\quad\forall l\geq1\\ \ecart\dis 
	f(x)/a(x)&\hbox{ a.e. }x\in  \mathcal{N}.\ea\right.\eeq
\end{theorem}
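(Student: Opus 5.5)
We will show that the function $u$ defined in (\ref{pbCEb}) solves (\ref{ecuesc}). Since $\la_p(L)>0$, Theorem~\ref{pmax} (applied to $u$ and $-u$) shows that (\ref{ecuesc}) has at most one solution, so this is enough. Existence of $u$ in $L^2_\mu(\Om)$ also follows from $\la_p(L)>0$: the associated bilinear form is coercive, so Lax--Milgram gives a solution.

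\emph{Step 1: the kernel vanishes off the diagonal blocks.} From the last condition in (\ref{hipkeqgen}), up to a $\mu\otimes\mu$-null set, $k(x,y)>0$ only on $\bigcup_{(i,j)\in J}E_i\times E_j$. If $(i,j)\in J$, then $i\stackrel{\cal R}\sim j$ by Definition~\ref{eqrelmu}, so $i$ and $j$ lie in the same class $I_l$ and $E_i\times E_j\subseteq \Om_l\times\Om_l$. Also, the third condition of (\ref{hipkeqgen}) says every $i\in I$ has some $(i,j)\in J$, so each $E_i$ belongs to some $\Om_l$. It follows that
$$k=0\quad \mu\otimes\mu\hbox{-a.e. on }\ \Om\times\Om\setminus\bigcup_{l\geq1}(\Om_l\times\Om_l).$$
In particular $k=0$ a.e. on $\Om_l\times\Om_m$ for $l\neq m$, and on $(\mathcal N\times\Om)\cup(\Om\times\mathcal N)$. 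The sets $\Om_l$ are pairwise disjoint up to null sets, by the first condition of (\ref{hipkeqgen}) and the fact that the classes are disjoint. Hence, by Fubini, for a.e. $x\in\Om_l$,
$$\into k(x,y)u(y)\,d\mu(y)=\int_{\Om_l}k(x,y)u(y)\,d\mu(y),$$
and for a.e. $x\in\mathcal N$ this integral equals $0$.

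\emph{Step 2: solvability on each piece.} We must check that each subproblem (\ref{pbCE}) is uniquely solvable and that $a>0$ a.e. on $\mathcal N$. For the first point, take $v\in L^2_\mu(\Om_l)$ and extend it by zero to $\tilde v$. Then $\langle L\tilde v,\tilde v\rangle$ equals the quadratic form of the restricted operator on $\Om_l$, so $\la_p$ of the restricted operator is at least $\la_p(L)>0$. Lax--Milgram then gives a unique $u_l\in L^2_\mu(\Om_l)$. For the second point, since $\hat k=0$ on $\mathcal N$, testing with $\chi_A$ for $A\subseteq\mathcal N$ gives $\int_A a\,d\mu\geq\la_p(L)\mu(A)$. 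Hence $a\geq\la_p(L)>0$ a.e. on $\mathcal N$, so $f/a\in L^2_\mu(\mathcal N)$.

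\emph{Step 3: assembling the solution.} Define $u$ by (\ref{pbCEb}). We first need $u\in L^2_\mu(\Om)$. Coercivity gives $\|u_l\|_{L^2(\Om_l)}\leq \la_p(L)^{-1}\|f\|_{L^2(\Om_l)}$, and the same bound holds on $\mathcal N$, so summing over the countably many disjoint pieces yields $\|u\|\leq\la_p(L)^{-1}\|f\|$. Then Step 1 shows that $u$ satisfies (\ref{ecuesc}) a.e. on each $\Om_l$ and on $\mathcal N$, hence a.e. on $\Om$. Uniqueness gives the conclusion.

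\textbf{Main obstacle.} The delicate point is Step 1: turning the symmetric-difference condition, which only holds up to $\mu\otimes\mu$-null sets, into the pointwise-a.e. statement about $x\mapsto\int k(x,y)u(y)\,d\mu(y)$. This requires Fubini (or Tonelli applied to $k|u|$), together with the integrability $k(x,\cdot)\in L^1_\mu$ and $k\in L^2_{\mu\otimes\mu}$. It also requires handling the overlaps of the $E_i$, which are only null in measure.
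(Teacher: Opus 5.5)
Your proposal is correct and follows the same route as the paper: you define $u$ piecewise, use (\ref{hipkeqgen}) to see that $k$ vanishes a.e.\ outside $\bigcup_{l}\Om_l\times\Om_l$ so the equation decouples on each $\Om_l$ and on $\mathcal N$, and use $a\geq\la_p(L)>0$ on $\mathcal N$ (the paper takes this inequality from Theorem~\ref{thautovautof} rather than testing with $\chi_A$). You also supply details the paper leaves implicit: the restricted operators inherit $\la_p\geq\la_p(L)>0$, the coercivity bounds put the assembled $u$ in $L^2_\mu(\Om)$ even with countably many classes, and the Fubini step justifies the pointwise a.e.\ identities.
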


\begin{proof}
	Let us consider $u$ given by (\ref{pbCEb}), which is well defined thanks to Lax-Milgram's theorem. We show that $u$ is the solution of (\ref{ecuesc}).
	
	Fix a point $x\in \Om_{l_1} $, where $l_1\geq1$. If $y$ is a point belonging to $\Om_{l_2}\neq\Om_{l_1}$, thanks to (\ref{hipkeqgen}), we know that $k(x,y)=0$; otherwise, $\Om_{l_1}=\Om_{l_2}$ a.e. Moreover, $k(x,y)=0$ for all $y\in \mathcal{N}$. This proves that for almost every $x\in \Om_{l_1}$, we have
	$$\ba{c}\dis a(x)u(x)-\int_\Om k(x,y)u(y)\,d\mu(y) =a(x)u(x)-\int_{\Om_{l_1}} k(x,y)u(y)\,d\mu(y)\\ \ecart\dis=a(x)u_{l_1}(x)-\int_{\Om_{l_1}} k(x,y)u_{l_1}(y)\,d\mu(y)=f(x).\ea$$
	Furthermore, if $x\in \mathcal{N}$, then
	$$a(x)u(x)-\into k(x,y)u(y)\,d\mu(y)=a(x){f(x)\over a(x)}=f(x),$$
	where we recall that thanks to Theorem \ref{thautovautof}, $a\geq\la_p(L)>0$.
	This proves that $u$ is indeed the solution of (\ref{ecuesc}). 
\end{proof}
\begin{remark}
	In the nonlinear case a similar result holds if the nonlinear problem in each domain has a solution.
\end{remark}
Thanks to the results proved in this section, assuming $L$ in the conditions of Theorem~\ref{pmax}, we prove that the strong maximum principle holds in every equivalence class and, as consequence, we characterize when it holds in the whole $\Om$.

\begin{theorem} \label{ppmxc} Let $l\geq1$ such that
	$\la_p(L_{\Om_l})>0$, where $\la_p(L_{\Om_l})$ is the first spectral value of the operator $L$ restricted to $\Om_l$. Then, if 
	$f\in L_\mu^2(\Om_l)$ is nonnegative and not the null function, the solution $u_l$ of (\ref{pbCE}) satisfies 
	\beq\label{ppomaCE} u_l(x)>0,\ \hbox{ a.e. } x\in \Om_l.\eeq
\end{theorem}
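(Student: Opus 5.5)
By Theorem \ref{pmax}, applied to the operator restricted to $\Om_l$ (where $\la_p(L_{\Om_l})>0$), we already know $u_l\geq0$ a.e. in $\Om_l$. The plan is to set $Z:=\{x\in\Om_l: u_l(x)=0\}$ and show $\mu(Z)=0$, using the connectivity of $\Om_l$ encoded in the equivalence class $I_l$.

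First we extract information from the equation on $Z$. For a.e. $x\in Z$, since $a(x)u_l(x)=0$, the equation (\ref{pbCE}) gives
$$-\int_{\Om_l}k(x,y)u_l(y)\,d\mu(y)=f(x)\geq0.$$
The left-hand side is $\leq0$, so both sides vanish. Thus $f=0$ a.e. on $Z$, and $\int_{\Om_l}k(x,y)u_l(y)\,d\mu(y)=0$ for a.e. $x\in Z$. Since $k\geq0$ and $u_l\geq0$, integrating over $Z$ shows that $k(x,y)u_l(y)=0$ for $\mu\otimes\mu$-a.e. $(x,y)\in Z\times\Om_l$. In other words, up to null sets, $\{k>0\}\cap(Z\times P)$ has measure zero, where $P:=\Om_l\setminus Z=\{u_l>0\}$.

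Next we convert this into a statement about the index sets. By (\ref{hipkeqgen}), $\{k>0\}$ coincides a.e. with $\cup_{(i,j)\in J}E_i\times E_j$. Hence for every $(i,j)\in J$ with $i,j\in I_l$ we get $\mu(E_i\cap Z)\,\mu(E_j\cap P)=0$; this step needs a $\sigma$-finiteness remark for the product measure, which I would take as implicit in the setting. I then call an index $i\in I_l$ \emph{partly zero} if $\mu(E_i\cap Z)>0$ and \emph{partly positive} if $\mu(E_i\cap P)>0$. Every $i$ with $\mu(E_i)>0$ is at least one of the two.

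The key step, and the main obstacle, is propagating along chains. Suppose $(i,j)\in J$ and $i$ is partly zero. Then $\mu(E_j\cap P)=0$, so $E_j\subset Z$ a.e. Applying the same relation with $(j,i)\in J$ (symmetry of $J$) and $j$ partly zero (provided $\mu(E_j)>0$) gives $E_i\subset Z$ a.e. as well. Degenerate sets with $\mu(E_j)=0$ must be handled carefully. If $\mu(E_j)=0$ for every $j$ with $(i,j)\in J$, then $k=0$ a.e. on $E_i\times\Om$, and the chain breaks. For this reason I would discard null $E_i$ from the start, noting that they do not affect $\{k>0\}$ up to null sets. If $E_i$ has positive measure but all its partners are null, then on $E_i$ the equation reads $au_l=f$, and the zero set there is forced by $f$. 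I expect this degenerate case to need either an implicit nondegeneracy assumption, or to be vacuous by the construction of the classes; I would check it first.

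Assuming it is vacuous, induction along any chain $t_0,\dots,t_n$ in $I_l$ shows: if some $E_i$ meets $Z$ in positive measure, then $E_j\subset Z$ a.e. for every $j\in I_l$. Hence $u_l=0$ a.e. in $\Om_l$. Then (\ref{pbCE}) gives $f=0$ a.e. in $\Om_l$, contradicting that $f$ is not the null function. Therefore $\mu(Z)=0$, which is (\ref{ppomaCE}).
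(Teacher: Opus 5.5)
Your argument is correct provided every block $E_i$ with $i\in I_l$ has positive measure. It runs in the opposite direction to the paper's.

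The paper propagates positivity forward. It sets $A_0=\{f>0\}$ and $A_{i+1}=A_i\cup\{x\in\Om_l:\int_{A_i}k(x,y)\,d\mu(y)>0\}$. Using $u_l=\frac1a\bigl(f+\int_{\Om_l}k(\cdot,y)u_l(y)\,d\mu(y)\bigr)$ and $u_l\ge0$, it gets $u_l>0$ on each $A_i$ by induction. It then simply asserts that the $A_i$ exhaust $\Om_l$ ``thanks to Definitions \ref{eqrelmu} and \ref{omlN}''.

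You instead show that the zero set $Z$ carries no mass of $k(x,y)u_l(y)$. By \eqref{hipkeqgen}, $Z$ is then a union of whole blocks closed under $J$-adjacency, hence either null or all of $\Om_l$.

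The ingredients are the same: the weak maximum principle, the sign structure of the fixed-point form, and Fubini on $\{k>0\}$. So your $\sigma$-finiteness caveat is no worse than what the paper itself uses implicitly. The paper's sets $A_i$ are defined from $k$ alone, without reference to the $E_i$. Your version actually carries out the passage from index chains to measure-theoretic statements, which the paper leaves unproved. It also pinpoints where $\mu(E_j)>0$ is needed to continue a chain.

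Of your two alternatives for the degenerate case, it is the first one: the case is not vacuous. Discarding null $E_i$ is also not harmless: it preserves \eqref{hipkeqgen} but can split a class of $\mathcal R$.

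Here is a counterexample. Take $\Om=[0,2]$ with Lebesgue measure, $a_0\equiv1$ and $k=\chi_{[0,1]^2}+\chi_{[1,2]^2}$, so that $a\equiv2$. Let $E_1=[0,1]$, $E_2=\{1\}$, $E_3=[1,2]$ and $J=\{(1,1),(3,3),(1,2),(2,1),(2,3),(3,2)\}$. Then:
\begin{itemize}
\item \eqref{hipkeqgen} holds;
\item $\{1,2,3\}$ is a single class, with $\Om_1=[0,2]$ and $\mathcal N=\emptyset$;
\item $\la_p(L)\ge1$.
\end{itemize}
Yet for $f=\chi_{[0,1]}$ the solution is $u=\chi_{[0,1]}$, which vanishes on $(1,2]$.

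So the statement, and Corollary \ref{ppmafes}, are false without a nondegeneracy hypothesis such as $\mu(E_i)>0$ for all $i\in I_l$. The paper's proof uses this silently: in the example, $A_i=[0,1]$ for every $i$. With that hypothesis added, both of your degenerate cases disappear and your proof is complete.
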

\begin{proof} Taking into account that 
	\beq\label{crecu}u_l(x)={1\over a(x)}\Big(f(x)+\int_{\Om_l} k(x,y)u_l(y)\,d\mu(y)\Big),\quad \ \hbox{ a.e. } x\in \Om_l,\eeq
	and that by Theorem \ref{pmax} (applied to $\Om$ replaced by $\Om_l$) $u_l\geq 0$ in $\Om_l$, we have 
	$u_l\geq f/a$ and then $u_l>0 $  a.e. in $\{f>0\}$.\par
	We denote
	$$A_0=\big\{x\in \Om_l:\ f>0\}$$
	Then, assuming we have defined the set $A_i$, $i\geq 0$, we take
	$$A_{i+1}=\Big\{x\in \Om_l:\ \int_{A_i}k(x,y)d\mu(y)>0\Big\}\cup A_i,$$
	Thanks to Definitions \ref{eqrelmu} and \ref{omlN}, we have that $\{A_i\}$ is an increasing family of subsets of $\Om_l$ which fulfills $\Om_l$. Now, we observe that  $u_l>0$ a.e. in $A_0$, equality (\ref{crecu}) implies  $u_l>0$ a.e. in $A_1$. Iterating this reasoning, we deduce $u_l>0$ a.e. in every set $A_i$ and then the result.
\end{proof}

As a result of this theorem, the following corollary holds, which characterizes the strong maximum principle.
\begin{corollary} \label{ppmafes} 
	The strong maximum principle is satisfied if and only if $\mu(\mathcal{N})=0$ and the partition given in Definition \ref{omlN} is formed by a single element $\Om_1$ with $\la_p(\Om_1)>0$.
\end{corollary}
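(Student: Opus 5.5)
We interpret the strong maximum principle as in Theorem \ref{ppmxc}: for every nonnegative $f\in L^2_\mu(\Om)$ with $f\not\equiv0$, the solution $u$ of (\ref{ecuesc}) satisfies $u>0$ a.e. in $\Om$. We work under the standing assumption $\la_p(L)>0$ of Theorem \ref{pmax}, so that (\ref{ecuesc}) is uniquely solvable by Lax--Milgram. The proof splits into the two implications.

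\emph{Sufficiency.} Assume $\mu(\mathcal{N})=0$ and that the partition of Definition \ref{omlN} consists of the single class $\Om_1$ with $\la_p(L_{\Om_1})>0$. Then $\Om=\Om_1$ up to a null set. Since $k$ vanishes a.e. outside $\Om_1\times\Om_1$ by (\ref{hipkeqgen}), the operator $L$ coincides with its restriction $L_{\Om_1}$, and (\ref{pbCE}) with $l=1$ is exactly (\ref{ecuesc}). Theorem \ref{ppmxc} then gives $u>0$ a.e. in $\Om$ for any nonnegative, nontrivial $f$.

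\emph{Necessity.} We argue by contraposition and build, in each failing case, a nonnegative nontrivial $f$ whose solution vanishes on a set of positive measure.

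\begin{itemize}
\item If $\mu(\mathcal{N})>0$ and $\mu(\Om\setminus\mathcal{N})>0$, take $f=\chi_{B}$ with $B\subset\Om\setminus\mathcal{N}$, $0<\mu(B)<\infty$. By Theorem \ref{Thscdes}, $u=f/a=0$ a.e. on $\mathcal{N}$.
\item If $\mu(\mathcal{N})>0$ and $\mu(\Om\setminus\mathcal{N})=0$, then $k=0$ a.e. Take $f=\chi_B$ with $B\subset\mathcal{N}$ and $0<\mu(B)<\mu(\mathcal{N})$. Then $u=f/a$ vanishes on $\mathcal{N}\setminus B$.
\item If there are two classes $\Om_1,\Om_2$ of positive measure, take $f=\chi_B$ with $B\subset\Om_1$ of finite positive measure. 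By Theorem \ref{Thscdes} and uniqueness, the component of $u$ on $\Om_2$ solves (\ref{pbCE}) with zero right-hand side, so $u=0$ on $\Om_2$.
\end{itemize}

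Finally, $\la_p(L_{\Om_1})>0$ is automatic: for $v$ supported in $\Om_1$, the quotient in (\ref{defautovppal}) for $L$ equals that for $L_{\Om_1}$, so $\la_p(L_{\Om_1})\geq\la_p(L)>0$.

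\emph{Main obstacle.} The delicate step is the case where $\mathcal{N}$ has positive measure but cannot be split into two pieces of positive measure, namely an atom. Then $f$ supported in $\mathcal{N}$ gives a solution positive on all of $\mathcal{N}$, and the second bullet fails. I would handle this with a convention: regard $\mathcal{N}$, or each of its atoms, as further classes, i.e. interpret ``a single element'' as ``$\Om$ is a.e. a single connectivity class''. I would also need $\sigma$-finiteness to choose $B$ with finite positive measure, so that $\chi_B\in L^2_\mu$. I would make both conventions explicit at the start of the proof.
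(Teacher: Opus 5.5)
Your proposal is correct, and its first necessity bullet is exactly the paper's proof: assuming $\mu(\mathcal{N})>0$, the paper takes $f\geq0$ vanishing on $\mathcal{N}$ and uses $k=0$ there to get $u=f/a=0$ on $\mathcal{N}$. That is all the paper writes, and it leaves three things out:
\begin{itemize}
\item it tacitly needs $\mu(\Omega\setminus\mathcal{N})>0$ for such an $f$ to be nontrivial;
\item it never shows that two classes of positive measure violate the principle;
\item it leaves sufficiency implicit.
\end{itemize}
Your split of the $\mathcal{N}$ case, your two-class bullet via Theorem~\ref{Thscdes}, your appeal to Theorem~\ref{ppmxc} for sufficiency, and the observation $\lambda_p(L_{\Omega_1})\geq\lambda_p(L)>0$ fill exactly these gaps.

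Your atom caveat is a genuine defect of the statement, not of your argument. A one-point space with $k\equiv0$ and $a>0$ satisfies the strong maximum principle although $\mu(\mathcal{N})>0$, so some convention like yours is unavoidable.

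Since you are fixing conventions anyway, also require $\mu(E_i)>0$ in \eqref{hipkeqgen}. Otherwise a null set $E_j$ linked in $J$ to two genuinely disconnected regions merges them into one class without changing $\{k>0\}$. For example, take $k=\chi_{[0,1]^2}+\chi_{[1,2]^2}$ on $[0,2]$, $E_1=[0,1]$, $E_2=[1,2]$, and $E_3=\emptyset$ paired with both. This gives a single class with $\mathcal{N}=\emptyset$, yet $f=\chi_{[0,1]}$ produces $u=0$ on $(1,2]$. So Theorem~\ref{ppmxc}, on which your sufficiency step rests, fails unless null $E_i$ are excluded.
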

\begin{proof}
	We assume that the strong maximum principle holds. Arguing by contradiction, we assume that $\mu(\mathcal{N})>0$. Then, by Definition \ref{omlN} and (\ref{hipkeqgen}), $k(x,y)=0$ a.e. $x\in\mathcal{N}$, a.e. $y\in \Om$. In particular, let $f$ be nonnegative such that $f(x)=0$ a.e. $x\in\mathcal{N}$. We obtain
	$$0=f(x)=a(x)u(x)-\into k(x,y)u(y)d\mu(y)=a(x)u(x).$$
	Thus, $u(x)=0$, which contradicts our hypothesis.
\end{proof}
\begin{remark}
	We note that constructing a family of sets $E_i$ satisfying \eqref{hipkeqgen} may be difficult when the domain cannot be decomposed as a finite union of cubes.
	
	In such situations, even in simple domains such as a ball, a finite union of cubes can only approximate the domain: for any $\varepsilon>0$, one may construct a set $\Omega_\varepsilon$ such that 
	$$\mu(\Omega\setminus\Omega_\varepsilon)<\varepsilon.$$
	
	Nevertheless, it is important to emphasize that the validity of the results in the previous sections does not rely on having an explicit geometric description of the sets $E_i$, but only on the existence of such a decomposition.
	
	However, when $k$ is continuous and $\mu$ is a Radon measure, we provide an alternative construction which is simpler, applies to more general domains and also satisties our theoretical results. Therefore, it can be used in practical applications.
\end{remark}

We now consider the case where $\Omega$ is a locally compact Hausdorff space, $\mu$ is a Radon measure, and $k\in C^0(\Omega\times\Omega)$. This setting includes the classical case appearing in the literature (see \cite{Coville3}), where $\Omega$ is an open subset of $\mathbb{R}^N$ and $\mu$ is the Lebesgue measure. To the best of our knowledge, the results obtained in this section are new even in this framework.

In this setting, in order to apply Theorems~\ref{Thscdes} and \ref{ppmxc}, we introduce a simpler construction of a partition of $\Omega$ than that given in Definition~\ref{omlN}, which also applies to more general domains.
\begin{definition}
	\label{defCEd1} Let $x,y\in\Om$, we say that $x$ is related to $y$ (through $k$)and we denote this by $x\stackrel{\cal R'}\sim y$, if there exists a sequence of points $p_i\in\Om$, $0\leq i\leq n$ such that $p_0=x$, $p_n=y$ and $k(p_j,p_{j+1})>0$ for all $j\in \{0,...,n-1\}.$
	\par
	We define the following set
	$${\cal N'}=\Big\{ x\in \Om:\ k(x,y)=0,\quad\forall\,y\in\Om\Big\}.$$
\end{definition}
In this case, the equivalence classes are subsets $\Om_l\subset\Om$ instead of index. Observe that in this case these sets are open and $\mathcal{N}$ is closed in $\Om$.

We also emphasize that in this case the following regularity result is satisfied
\begin{proposition}
	We assume $\la_p(\Om)>0$. If $f\in C^0(\Om)$ and $a\in C^0(\Om)$ in (\ref{ecuesc}), then $u\in C^0(\Om)$
\end{proposition}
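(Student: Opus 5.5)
The idea is to use the equation itself as a fixed-point identity and show that its right-hand side is continuous. By Lax--Milgram (the coercivity is given by $\la_p(\Om)>0$), problem (\ref{ecuesc}) has a unique solution $u\in L^2_\mu(\Om)$. As in (\ref{crecu}), this solution satisfies
$$u(x)=\frac{1}{a(x)}\Big(f(x)+(Ku)(x)\Big),\qquad (Ku)(x):=\into k(x,y)u(y)\,d\mu(y),$$
for a.e. $x\in\Om$. I will prove that the right-hand side defines a continuous function on all of $\Om$. Then it is the continuous representative of $u$, which is what ``$u\in C^0(\Om)$'' means. Two things are needed: $a$ must not vanish on $\Om$, and $Ku$ must be continuous.

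\textbf{Step 1 ($a>0$ everywhere).} By Theorem \ref{thautovautof}, $a\geq\la_p(L)>0$ holds $\mu$-a.e. Since $a$ is continuous, the set $\{a<\la_p(L)\}$ is open and $\mu$-null, so it does not meet the support of $\mu$. Hence $a\geq \la_p(L)$ on $\operatorname{supp}\mu$. On $\Om\setminus\operatorname{supp}\mu$ the formula above is irrelevant for the $L^2_\mu$ class, so I will either restrict to $\operatorname{supp}\mu$ (implicitly assumed to be $\Om$) or note that $a>0$ pointwise follows from $a=a_0+\hat k$ with $a_0\ge 0$. In either case, $1/a$ is continuous where it is needed.

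\textbf{Step 2 (continuity of $Ku$).} Fix $x_0\in\Om$ and let $x\to x_0$. By Cauchy--Schwarz,
$$|(Ku)(x)-(Ku)(x_0)|\leq \|k(x,\cdot)-k(x_0,\cdot)\|_{L^2_\mu(\Om)}\,\|u\|_{L^2_\mu(\Om)},$$
so it suffices to show that $x\mapsto k(x,\cdot)$ is continuous into $L^2_\mu(\Om)$. Take a compact neighbourhood $C$ of $x_0$, which exists by local compactness. Given $\ep>0$, use the inner regularity of the Radon measure to choose a compact $K\subset\Om$ that carries all but $\ep$ of the mass of the relevant integrands. On the compact set $C\times K$, the function $k$ is uniformly continuous and bounded, and $\mu(K)<\infty$. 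Therefore
$$\int_K |k(x,y)-k(x_0,y)|^2\,d\mu(y)\to0 \quad\text{as } x\to x_0.$$

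\textbf{Main obstacle: the tail over $\Om\setminus K$.} Here I need $\int_{\Om\setminus K}|k(x,y)|^2d\mu(y)<\ep$ uniformly for $x\in C$. If $\Om$ is compact, or more generally $\mu(\Om)<\infty$ with $k$ bounded on $C\times\Om$, this follows from choosing $K$ with $\mu(\Om\setminus K)$ small, and the proof is complete. In the general case I would try the following:
\begin{itemize}
\item Exploit $k\in L^2_{\mu\otimes\mu}(\Om\times\Om)$ together with symmetry, and the fact that the functions $x\mapsto\int_{K}k(x,y)^2d\mu(y)$ increase, as $K$ grows, to $x\mapsto\int_\Om k(x,y)^2 d\mu(y)$.
\item Apply a Dini-type argument on the compact set $C$. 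This works provided the limit $x\mapsto\int_\Om k(x,y)^2d\mu(y)$ is continuous.
\end{itemize}
If that continuity cannot be derived from the hypotheses, I would add it as a mild assumption, for instance uniform $L^2_\mu$-integrability of $\{k(x,\cdot)\}_{x\in C}$, which holds in the classical compactly supported convolution case.

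With Steps 1 and 2 in hand, $(f+Ku)/a$ is continuous on $\Om$. Since it coincides with $u$ a.e., it gives the claimed continuous representative.
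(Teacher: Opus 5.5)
Your route is the paper's. Its entire proof is the identity $u=\frac1a\bigl(f+\into k(x,y)u(y)\,d\mu(y)\bigr)$ plus the bare assertion that the right-hand side is continuous. You are right that continuity of $Ku$ is the whole point, and your argument is complete when $\Om$ is compact, or when $\mu(\Om)<\infty$ and $k$ is bounded on $C\times\Om$.

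The obstacle you isolate is genuine, and it cannot be removed. Under the hypotheses actually in force (locally compact Hausdorff $\Om$, Radon $\mu$, $k$ continuous and satisfying (\ref{hipk})), the step fails, and so does the proposition. The paper's one-line justification is therefore incomplete as well. Here is a counterexample.

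\begin{itemize}
\item Take $\Om=\RR$ with Lebesgue measure. For $n\ge2$, let $\phi_n$ be a tent of height $1$ and width $n^{-6}$ centred at $1/n$. Let $\psi_n$ be a tent of height $n^4$ and width $n^{-4}$ centred at $n+10$. Then $\|\psi_n\|_{L^1}=\tfrac12$ and $\|\psi_n\|_{L^2}=n^2/\sqrt3$.
\item Set $k(x,y)=\sum_n\bigl(\phi_n(x)\psi_n(y)+\psi_n(x)\phi_n(y)\bigr)$. This is a locally finite sum, hence continuous. It is symmetric, nonnegative, in $L^2(\RR^2)$, satisfies (\ref{hipkeqgen}), and $\hat k\le 1$.
\item Take $a\equiv A$ large. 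Then $a_0=A-\hat k\ge0$ and $\la_p(L)\ge A-\|k\|_{L^2(\RR^2)}>0$.
\item Put
\[
v=\sum_n\frac{\psi_n}{n\|\psi_n\|_{L^2}},\qquad w=\frac1A\sum_n\frac{\|\psi_n\|_{L^2}}{n}\,\phi_n,\qquad u=v+w\in L^2(\RR).
\]
\item Because all supports are disjoint, $Ku=Aw+\sum_n c_n\psi_n$ with $c_n=\|\psi_n\|_{L^2}\|\phi_n\|_{L^2}^2/(nA)$. Hence $f:=Lu=Av-\sum_n c_n\psi_n$ is continuous and lies in $L^2$.
\item Yet $w(1/n)=n/(\sqrt3A)\to\infty$, while $u$ is continuous on $\RR\setminus\{0\}$. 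So the solution of (\ref{ecuesc}) has no continuous representative.
\end{itemize}

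In this example $\|k(1/n,\cdot)\|_{L^2}\to\infty$. So $x\mapsto\int k(x,y)^2\,dy$ is not even locally bounded, and the continuity your Dini argument requires fails.

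The extra hypothesis that does close your Step 2 is local boundedness of $x\mapsto\|k(x,\cdot)\|_{L^2_\mu}$. By the uniform boundedness principle, this is also necessary for $Ku$ to be continuous for every $u\in L^2_\mu$. Under it you need no uniform control of the tails of $k(x,\cdot)$. Apply inner regularity to $u$ rather than to $k$:
\begin{itemize}
\item Choose the compact $K$ with $\|u\|_{L^2_\mu(\Om\setminus K)}<\ep$, using inner regularity on the $\sigma$-finite set $\{u\neq0\}$.
\item Bound the tail by $2\ep\sup_{x\in C}\|k(x,\cdot)\|_{L^2_\mu}$.
\item Handle the integral over $K$ by your compactness and uniform-continuity argument, using $\mu(K)<\infty$.
\end{itemize}
This covers, for example, $k$ bounded with $\mu(\Om)<\infty$, which is the standing assumption of the applications section.

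Finally, in Step 1, $a_0\ge0$ only gives $a\ge0$, not $a>0$. The clean fix is to divide by $\max\{a,\la_p(L)\}$ instead. By Theorem \ref{thautovautof} this function is continuous, positive everywhere, and equal to $a$ $\mu$-a.e.
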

\begin{proof}
	This just follows from
	$$u=\frac{1}{a}\left(f+\into k(x,y)\,u(y) d\mu(y)\right),$$
	where the right-hand side is continuous.
\end{proof}

In \cite{Coville3}, in the case where $\Omega\subset\mathbb{R}^N$ is open and $\mu$ is the Lebesgue measure, the kernel $k$ is assumed to satisfy
\begin{equation}\label{hipcoville}
	\exists c_0>0,\ \exists \varepsilon_0>0,\quad
	\inf_{x\in\Omega}\left\{\inf_{y\in B(x,\varepsilon_0)} k(x,y)\right\}>c_0,
\end{equation}
which, thanks to the continuity of $k$ on $\Omega\times\Omega$, implies in particular that $k(x,x)>0$ in $\Omega$.

In the present work, we show that this assumption is not intrinsic to the validity of the strong maximum principle. More precisely, we provide a necessary and sufficient condition for the strong maximum principle to hold in Corollary \ref{ppmafes}. We note that assumptions such as \eqref{hipcoville} may naturally appear in works pursuing additional qualitative properties beyond the strong maximum principle. 

In particular, we prove 
%In particular, we prove that \eqref{hipcoville} implies the existence of a single equivalence class in $\Omega$ togeth
%er with $\mathcal N=\emptyset$, and therefore ensures the applicability of the strong maximum principle. 
\begin{proposition}\label{hipgen}
	Assumption \eqref{hipcoville} implies the existence of a single equivalence class in $\Omega$ together with $\mathcal N'=\emptyset$, and therefore, the strong maximum principle applies. 
\end{proposition}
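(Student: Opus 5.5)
Under \eqref{hipcoville}, I would first settle $\mathcal N'=\emptyset$ and then prove that any two points of $\Om$ are $\mathcal R'$-related, so that there is only one equivalence class.

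The first part is immediate. Every $x\in\Om$ lies in $B(x,\ep_0)\cap\Om$, so taking $y=x$ in \eqref{hipcoville} gives $k(x,x)>c_0>0$, and hence $x\notin\mathcal N'$. The same remark gives reflexivity of $\mathcal R'$ through the trivial chain $x,x$.

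For the second part, the key observation is that \eqref{hipcoville} makes $k(x,y)>0$ whenever $|x-y|<\ep_0$. The class $[x]$ of any point therefore contains $B(x,\ep_0)\cap\Om$, since each such point is reached by a one-step chain. So every equivalence class is open. The complement of a class is a union of classes, so it is open as well, and every class is therefore relatively closed in $\Om$. If $\Om$ is connected, it follows that the single nonempty class is all of $\Om$. More concretely, for $x,y\in\Om$ one can join them by a path $\ga\subset\Om$ (open connected sets in $\RR^N$ are path-connected), subdivide $\ga$ into points $p_0=x,\dots,p_n=y$ with $|p_j-p_{j+1}|<\ep_0$ using uniform continuity of $\ga$ on $[0,1]$, and obtain $k(p_j,p_{j+1})>c_0>0$ along the chain.

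The main obstacle is connectedness. The statement tacitly needs $\Om$ to be connected (a domain, as in \cite{Coville3}). If $\Om$ had components at distance at least $\ep_0$, \eqref{hipcoville} would not link them, and the conclusion could fail. I would therefore make explicit that $\Om$ is an open connected set, which is the standing framework of \cite{Coville3}. Under that assumption the clopen argument above suffices.

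Finally, to conclude that the strong maximum principle applies, I would invoke Corollary~\ref{ppmafes}, using the construction of Definition~\ref{defCEd1} in place of Definition~\ref{omlN}. With a single class $\Om_1=\Om$ and $\mathcal N'=\emptyset$, the strong maximum principle follows under $\la_p(L)>0$ via Theorem~\ref{ppmxc}, in which the iterated sets $A_i$ exhaust $\Om$ because of the $\ep_0$-chains constructed above.
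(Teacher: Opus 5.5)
Your proof is correct and has the same skeleton as the paper's. Both join $x$ and $y$ by a path $\ga$ in the (tacitly connected) open set $\Om$, use uniform continuity of $\ga$ to select $p_0=x,\dots,p_n=y$, and check that $k(p_j,p_{j+1})>0$.

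The difference is in how positivity on each step is secured:
\begin{itemize}
\item You use the uniform strength of \eqref{hipcoville} directly, so $|p_j-p_{j+1}|<\ep_0$ at once gives $k(p_j,p_{j+1})>c_0$.
\item The paper extracts from \eqref{hipcoville} only that $k(z,z)>0$. It then uses continuity of $k$ and compactness of $\Ga=\{(\ga(t),\ga(t)):t\in[0,1]\}$ to find a tube of radius $\rho$ around $\Ga$ on which $k>0$, and subdivides so that $|\ga(t_1)-\ga(t_2)|\le\rho$.
\end{itemize}
Your version is shorter and does not use continuity of $k$. It also shows that each class contains $B(x,\ep_0)\cap\Om$, which is what makes your clopen argument work. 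The paper's version in effect proves a stronger statement: a continuous kernel that is positive on the diagonal already gives a single class on a connected open set. That fits the paper's message that \eqref{hipcoville} is more than is needed.

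You are right that connectedness is indispensable; the paper assumes it inside its proof but not in the statement. Your explicit check that $\mathcal N'=\emptyset$, via $k(x,x)>c_0$, fills a step the paper leaves implicit.

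One caution on your last paragraph. Theorem~\ref{ppmxc} needs $\int_{A_i}k(x,y)\,d\mu(y)>0$, which concerns positive-measure overlaps, not point chains, so ``because of the $\ep_0$-chains'' does not by itself justify the exhaustion. Under \eqref{hipcoville} it does follow, from
\[
\int_{A_i}k(x,y)\,d\mu(y)\ge c_0\,\mu\big(A_i\cap B(x,\ep_0)\big),
\]
together with a clopen argument applied to the open set $\{x\in\Om:\mu\big((\cup_iA_i)\cap B(x,\ep_0)\big)>0\}$. The paper does not carry out this step either.
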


\begin{proof}
	We assume \eqref{hipcoville} and let $x,y\in\Om$. We need to prove that $x\stackrel{\cal R'}\sim y$. Taking into account that $\Om$ is an open and connected subset of $\RR^N$, we have that $\Om$ is path-connected. Then, there exists $\gamma\in C^0([0,1],\Om)$ such that $\ga(0)=x$, $\ga(1)=y$. We denote by
	$$\Ga=\left\{\big(\ga(t),\ga(t)\big): t\in[0,1]\right\}.$$
	Since $\Ga\subset\Om\times\Om$ is compact and $k$ is continuous in $\Om\times\Om$ (and hence uniformly continuous in compact subsets of $\Om\times\Om$) and positive in $\Ga$, we have that $\exists\rho>0$, $\rho<$distance$(\Ga,\partial(\Om\times\Om))$ such that defining by
	$$K=\left\{(x,y)\in\Om\times\Om:\hbox{ distance}\big((x,y),\Ga\big)<\rho\right\},$$
	it is satisfied that
	$$k(x,y)>0,\quad\forall(x,y)\in K.$$
	Moreover, using that $\ga$ is uniformly continuous, we have
	$$\exists n\in\NN \hbox{ such that distance}(\ga(t_1),\ga(t_2))\leq\rho,\quad\forall t_1,t_2\in[0,1],\hbox{ with } |t_1-t_2|\leq\frac{1}{n}.$$
	As a result, $k\big(\ga(t_1),\ga(t_2)\big)>0$, $\forall t_1,t_2\in[0,1],\hbox{ with } |t_1-t_2|\leq1/n.$ For this reason, $x\stackrel{\cal R'}\sim y$ through the sequence
	$$p_0=\ga(0)=x,\, p_1=\ga\left(\frac{1}{n}\right)\hspace{-1.2mm},\hdots, p_k=\ga\left(\frac{k}{n}\right)\hspace{-1.2mm},\hdots,p_{n-1}=\ga\left(\frac{n-1}{n}\right)\hspace{-1.2mm},\, p_n=\ga(1)=y.$$
\end{proof} 

Furthermore, we exhibit a simple example for which \eqref{hipcoville} fails, while $\mathcal N'=\emptyset$ still holds, and hence the strong maximum principle remains valid.

\begin{example}\label{hipgen2}
We present the following example of a function $k$ such that $\mathcal N'=\emptyset$ while $k(x,x)>0$ does not hold in $\Omega$. Let $\Omega=(0,+\infty)$. We first define $f:(0,+\infty)\times(0,+\infty)\to\mathbb R$ by
\[
f(x,y):=
\begin{cases}
	0, & x>1,\\
	1-x, & x\in(0,1].
\end{cases}
\]
We then define $k:(0,+\infty)\times(0,+\infty)\to\mathbb R$ by symmetrization,
\[
k(x,y):=f(x,y)+f(y,x).
\]
Clearly, for all $x\in[1,+\infty)$ we have $k(x,x)=0$. 

Nevertheless, for every $x\in(0,+\infty)$ there exists $y\in(0,+\infty)$ such that $k(x,y)>0$, since it suffices to take any $y\in(0,1)$ (and conversely by symmetry). As a consequence $\mathcal{N'}=\emptyset$ and any two points $x,z\in(0,+\infty)$ satisfy $x\stackrel{\cal R'}\sim z$, since $x\stackrel{\cal R}\sim y$ and $y\stackrel{\cal R'}\sim z$ for all $y\in(0,1)$.

To visually clarify the example, we represent the functions $f$ and $k$ in $\Om\times\Om$.
		\begin{figure}[H]
				\centering
				\includegraphics[width=0.48\textwidth]{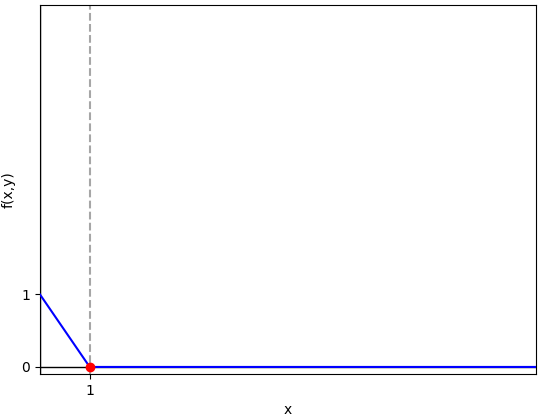}
				\hfill
				\includegraphics[width=0.49\textwidth]{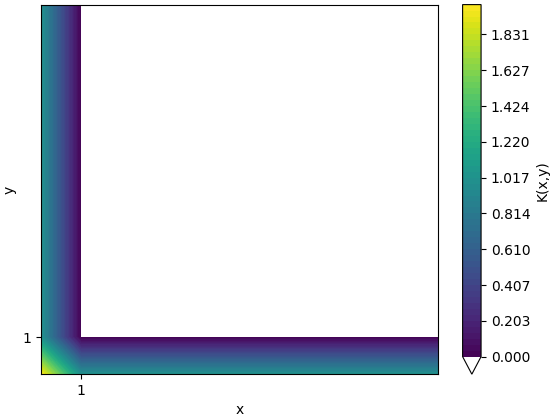}
				\caption{Functions $f(x,y)$ and $k(x,y)$.}
			\end{figure}
\end{example}
%	lo que por continuidad de $K$ implica $K(x,x)>0$ en $\Om$. Sin embargo, nosotros no necesitamos esta hipótesis. De hecho, hemos probado que es condición necesaria y suficiente para tener principio del máximo fuerte $\mathcal{N}_\mathcal{R}=\emptyset$. Cuando no se verifique esta propiedad, gracias al Teorema \ref{Thscdes}, basta con estudiar el problema por separado en $\mathcal{N}_\mathcal{R}$ y $\Om\setminus\mathcal{N}_\mathcal{R}$. \medskip
%	
%	Para ver que, efectivamente, nuestra hipótesis es más débil, 
%\end{remark}

To conclude this section, we show that thanks to Theorem \ref{pmax}, the sub and supersolutions method, can be extended as follows
\begin{theorem}[The method of sub and supersolutions]
	\label{Teosubsup} We consider $\underline u,\overline u\in L^2_\mu(\Om)$ and $F$ verifying (\ref{caratheodory}) such that
	\begin{enumerate}[labelsep=0.5em,label=(\roman*)]
		\item \begin{minipage}[t]{\linewidth}
			\vspace{-2.5ex}
			\beq\label{hipsumF}|F(x,\underline u)|+|F(x,\overline u)|\in L_\mu^2(\Om),\eeq
		\end{minipage}
		
		\item \begin{minipage}[t]{\linewidth}
			\vspace{-2.5ex}
			\beq\label{submsup}\underline u\leq \overline u\ \text{ a.e. in }\Om,\eeq
		\end{minipage}
		
		\item \begin{minipage}[t]{\linewidth}
			\vspace{-2.5ex}
			\beq\label{subysyp} (L\underline u)(x)\leq F(x,\underline u),\quad  (L\overline u)(x)\geq F(x,\overline u),\quad \text{a.e. } x\in \Om,\eeq
		\end{minipage}
		
		\item There exists $\ga\geq0$ such that
		\vspace{-1ex}
			\beq\label{hipcrF} F(x,s_1)+\ga s_1\leq F(x,s_2)+\ga s_2,\quad\underline u(x)\leq s_1\leq s_2\leq \overline u(x),\quad \text{a.e. }x\in\Om.\eeq		
	\end{enumerate}
	Then, there exists $u$ solution of
	\beq\label{ecunol} Lu(x)= F(x, u),\quad\underline u(x)\leq u(x)\leq \overline u(x),\quad \hbox{a.e. }x\in\Om\eeq
	Furthermore, there exist $u_\ast,u^\ast$ such that every solution $u$ of (\ref{ecunol}) verifies 
	$$u_\ast(x)\leq u(x)\leq u^\ast(x), \quad a.e.\hbox{ } x\in \Om.$$
\end{theorem}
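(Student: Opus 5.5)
The plan is the classical monotone iteration, carried out in $L^2_\mu(\Om)$ with no compactness available. The iteration is driven by the shifted operator $L+\ga I$ instead of $L$. Several preliminary facts about this operator are needed first.

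\emph{Spectral bound.} By (\ref{defautovppal}), $\la_p(L+\ga I)=\la_p(L)+\ga$. The weak maximum principle (Theorem~\ref{pmax}) needs this quantity to be positive. Rather than assuming $\la_p(L)>0$, I would enlarge $\ga$ if necessary, which is harmless because (\ref{hipcrF}) remains true for any larger $\ga$. Identity (\ref{identk}) gives $\la_p(L)\geq \operatorname*{ess\,inf} a_0\geq 0$, so in fact any $\ga>0$ works.

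\emph{Invertibility.} Lax--Milgram then makes $L+\ga I$ an isomorphism of $L^2_\mu(\Om)$.

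\emph{Order preservation.} Theorem~\ref{pmax}, applied to $L+\ga I$, shows that $(L+\ga I)^{-1}$ maps nonnegative functions to nonnegative functions.

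Next set $G(x,s):=F(x,s)+\ga s$, and truncate by $\tilde s=\max\{\underline u(x),\min\{s,\overline u(x)\}\}$. By (\ref{hipcrF}), $G$ is nondecreasing in $s$ on $[\underline u(x),\overline u(x)]$. Consequently, for any measurable $w$ with $\underline u\leq w\leq\overline u$,
\[
G(x,\underline u)\leq G(x,w)\leq G(x,\overline u).
\]
Together with (\ref{hipsumF}), this shows $G(\cdot,w)\in L^2_\mu(\Om)$. This is where hypothesis (i) is used; measurability of $G(\cdot,w)$ comes from the Carathéodory condition (\ref{caratheodory}).

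\emph{Iteration.} Define $\overline u_0=\overline u$, $\underline u_0=\underline u$, and
\[
(L+\ga I)\overline u_{n+1}=G(x,\overline u_n),\qquad (L+\ga I)\underline u_{n+1}=G(x,\underline u_n).
\]
By induction, combining (\ref{subysyp}), the monotonicity of $G$, and the order preservation of $(L+\ga I)^{-1}$, one gets
\[
\underline u\leq\underline u_n\leq\underline u_{n+1}\leq\overline u_{n+1}\leq\overline u_n\leq\overline u .
\]
For instance, $(L+\ga I)(\overline u-\overline u_1)\geq G(x,\overline u)-G(x,\overline u)=0$, so $\overline u_1\leq\overline u$. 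The sequences are monotone and trapped between two $L^2_\mu$ functions, so they converge a.e.\ and in $L^2_\mu(\Om)$ by dominated convergence. Call the limits $u^\ast$ and $u_\ast$.

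\emph{Passage to the limit.} Continuity of $F(x,\cdot)$ gives $G(x,\overline u_n)\to G(x,u^\ast)$ a.e. The sandwich bound above supplies the domination, so the convergence also holds in $L^2_\mu(\Om)$. Since $L+\ga I$ is bounded, I can let $n\to\infty$ in the defining equation and obtain $Lu^\ast=F(x,u^\ast)$; the same argument applies to $u_\ast$. This yields existence, with $\underline u\leq u_\ast\leq u^\ast\leq\overline u$.

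\emph{Minimal and maximal solutions.} Let $u$ be any solution of (\ref{ecunol}). Then $(L+\ga I)u=G(x,u)$, and induction with the same comparison gives $\underline u_n\leq u\leq\overline u_n$ for all $n$. Passing to the limit gives $u_\ast\leq u\leq u^\ast$.

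\emph{Main obstacle.} Since $(L+\ga I)^{-1}$ is not compact (Remark~\ref{rem:identk}), there is no Schauder-type argument, and convergence must come entirely from monotonicity plus dominated convergence. The delicate points are therefore twofold. First, the integrability bound on $G(\cdot,w)$ derived from (\ref{hipsumF}) must hold uniformly in $n$. Second, the choice of $\ga$ must make Theorem~\ref{pmax} available for $L+\ga I$.
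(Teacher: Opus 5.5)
Your proposal is correct and follows essentially the same route as the paper. The paper reduces to $\gamma=0$ and $\lambda_p(L)>0$ by passing to $L+\mu I$ and $F(x,s)+\mu s$, which is exactly your iteration with $L+\gamma I$ and $G(x,s)=F(x,s)+\gamma s$, followed by the same monotone induction, dominated-convergence passage to the limit, and comparison argument for the extremal solutions; your extra remark that $\lambda_p(L)\geq \operatorname*{ess\,inf} a_0\geq 0$ via (\ref{identk}), so any $\gamma>0$ suffices, is a valid small refinement of the paper's choice $\mu>-\lambda_p(L)$.
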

\begin{proof} 
As the proof is fairly standard, we’ll just outline it briefly. First, observe that it suffices to prove the result for $\gamma=0$ and under the assumption that $\lambda_p(L)>0$. 
	The general case then follows by applying the first result to $L+\mu I$ and $F(x,s)+\mu s$, 
	with $\mu\geq\gamma$ and $\mu>-\lambda_p(L)$.
	
	We define $\underline u_0=\underline u$, $\overline u_0=\overline u$. For each $n\geq 0$ we denote by   $\underline u_{n+1}$, $\overline u_{n+1}$ the solutions of the equations
	\beq\label{edpn} L\underline u_{n+1}=F(x,\underline u_n),\quad L\overline u_{n+1}=F(x,\overline u_n),\quad \hbox{ a.e. }\Om.\eeq
	These problems have a unique solution thanks to $\la_p(L)>0$, which allows to apply Lax-Milgram's Theorem.\par
	It is direct to prove that  for each $n\geq 0$, it is satisfied
	$$ L\underline u_n \leq F(x,\underline u_n)\ \hbox{ a.e. }\Om,\quad L \overline u_n\geq F(x,\overline u_n)\ \hbox{ a.e. }\Om,$$
	$$\underline u_n\leq \underline u_{n+1}\leq \overline u_{n+1}\leq \overline u_n\ \hbox{ a.e. } \Om.$$
	%Reasoning by induction, it is enough to show that if
	%$$ L\underline u_n \leq F(x,\underline u_n),\quad L \overline u_n\geq F(x,\overline u_n),\quad\underline u_n\leq \overline u_n\ \hbox{ a.e. }\Om,$$
	%then
	%$$ L\underline u_{n+1} \leq F(x,\underline u_{n+1}),\quad L \overline u_{n+1}\geq F(x,\overline u_{n+1})\ \hbox{ a.e. }\Om,$$
	%$$\underline u_n\leq \underline u_{n+1}\leq \overline u_{n+1}\leq \overline u_n\ \hbox{ a.e. }\Om.$$
	%In fact, it suffices to prove the last inequality. Indeed, thanks to (\ref{hipcrF}) said inequality implies that
	%$$L\underline u_{n+1}=F(x,\underline u_n)\leq F(x,\underline u_{n+1})\leq F(x,\overline u_{n+1})\leq F(x,\overline u_n)=L\overline u_{n+1}\ \hbox{ a.e. }\Om.$$\par
	%By definition of $\underline u_{n+1}$ and $\overline u_{n+1}$, and applying hypothesis (\ref{subysyp}) we have
	%$$L\underline u_{n+1}= F(x,\underline u_n)\geq L\underline u_n,\quad L\overline u_{n+1}= F(x,\overline u_n)\leq L\overline u_n,\ \hbox{ a.e. }\Om,$$
	%therefore
	%$$\underline u_{n+1}\geq \underline u_n,\quad \overline u_{n+1}\leq \overline u_n,\quad\hbox{ a.e. }\Om.$$
	%Moreover, by using
	%$$L\underline u_{n+1}=F(x,\underline u_n)\leq F(x,\overline u_n)=L\overline u_{n+1}\ \hbox{ a.e. }\Om,$$
	%we conclude that $\underline u_{n+1}\leq \overline u_{n+1}$ a.e. $\Om$, which finally proves the inequality.
	
	Since the sequences $\left\{\underline{u_n}\right\}_n$ and $\left\{\overline{u_n}\right\}_n$ are monotone and bounded, there exist $u_\ast,u^\ast:\Om\to \RR$ such that
	$$\underline u_n\nearrow u_\ast,\quad \overline u_n\searrow u^\ast \hbox{ a.e. }\Om,\quad \underline u_n\leq u_\ast\leq u^\ast\leq \overline u_n,\ \forall\, n\in\NN.$$
	Using  also
	 $$\underline{u}\leq\underline{u}_n\leq\overline u_n\leq\overline{u},\quad F(x,\underline{u})\leq F(x,\underline{u}_n)\leq F(x,\overline u_n) \leq F(x,\overline{u}),\quad\hbox{a.e. in }\Om,$$ 
	 and (\ref{hipsumF}), we can apply
	Lebesgue's dominated convergence theorem to deduce
	$$\underline u_n\to u_\ast,\ \ \overline u_n\to u^\ast,\ \ F(x,\underline u_n)\to F(x,u_\ast),\ \ F(x,\overline u_n)\to F(x,u^\ast)\quad\hbox{ in }L^2_\mu(\Om).$$
	This allows us to pass to the limit in (\ref{edpn}) to conclude that $u_\ast$ and $u^\ast$ are solutions of  (\ref{ecunol}).
	
	To end the proof, it remains to show that $u_*$, $u^*$ are, respectively, minimal and maximal solutions among all solution $u$ of (\ref{ecunol}) satisfying $\underline u\leq u\leq \overline u$. 
	Again by induction, we can prove that 
%	To this aim, for such $u$, let us prove by induction that
	$$\underline u_n\leq u\leq \overline u_n\ \hbox{ a.e. }\Om,$$
	which  implies that  $u_\ast\leq u\leq u^\ast$.
	
	%Indeed, the inequality is trivial for $n=0$. Assuming it holds for some $n$, we obtain
	%$$L\underline u_{n+1}=F(x,\underline u_n)\leq F(x,u)=Lu\leq F(x,\overline u_n)=L\overline u_{n+1}\ \hbox{ a.e. }\Om.$$
	%Thus, $\underline u_{n+1}\leq u\leq \overline u_{n+1}$ a.e. $\Om$, which concludes the proof. 
\end{proof}

\section{Applications}
In this section, we assume that conditions (\ref{hipk}) and (\ref{caratheodory}) hold. Under further assumptions, we give an existence result to solve some problems of the form (\ref{geneq2}) finding other solutions than the null function. Then, we will develop a nonexistence result. Finally, we present the logistic and Gompertz equations as models from population dynamics fitting into our abstract framework. Moreover, in the particular case of the logistic equation, we will also be able to find strictly positive solutions in each $\Om_l$ given by Definition \ref{omlN}. This extends uniqueness to each equivalence class so that there are multiple nonnegative solutions different from the null function.

First of all, we recall that it is satisfied
$$\Om=\mathcal{N}\bigcup_{l\geq1}\Om_l,$$
where $\Om_l$, $l\geq1$ and $\mathcal{N}$ are given by Definition \ref{omlN}.
From now on, without loss of generality, we can assume 
\beq\label{hipstromaxprin}\exists l\geq1 \hbox{ such that }\Om=\Om_l\ \hbox{ a.e in }\Om.\eeq
Otherwise, the problem can be studied independently in each $\Om_l$ and $\mathcal{N}$ similarly to Theorem \ref{Thscdes}. 

We begin showing the following existence theorem:
\begin{theorem}\label{thexistence}
	Let $\Om$ be such that $\mu(\Om)<+\infty$ and $k\in L^\infty_{\mu\otimes\mu}(\Om\times\Om)$. We consider $F:\Om\times[0,+\infty)\to\RR$ satisfying (\ref{caratheodory}) together with the following assumptions:
		\begin{enumerate}[labelsep=0.5em,label=(\roman*)]
		\item \begin{minipage}[t]{\linewidth}
			\vspace{-2.5ex}
			\beq\label{Fnoneg0}F(x,0)\geq0.\eeq
		\end{minipage}
		
		\item There exists $\alpha\in L^\infty_\mu(\Om)$ satisfying that $\la_p(L-\alpha I)<0$ and for every $\ep>0$ $\exists\delta(\ep)>0$ such that  if $0<s\leq\delta$, then
		\beq\label{hipsubsol}F(x,s)\geq(\alpha(x)-\ep)s\ \hbox{ a.e. }x\in\Om.\eeq
		
		\item For every $M>0$ there exists $R(M)>0$ such that if $s\geq R$, then
		\beq\label{hipsupsol}F(x,s)<-Ms \ \hbox{ a.e. }x\in\Om.\eeq
	\end{enumerate}
	Then, there exist a strictly positive solution of \eqref{geneq2} $u\in L^2_\mu(\Om)$ and minimal and maximal strictly positive solutions of \eqref{geneq2} $u_\star,u^\star\in L^2_\mu(\Om)$ such that $u\in L^2_\mu(\Om)$  satisfies $u_\star\leq u\leq u^\star$. 
	
	Furthermore, if $F$ satisfies
	\begin{equation}\label{hipunicidad}
		\frac{F(x,s_1)}{s_1}>\frac{F(x,s_2)}{s_2},\quad \forall s_1, s_2, \ 0<s_1< s_2<+\infty
	\end{equation}
	then, problem \eqref{geneq2} admits a unique strictly positive solution.
\end{theorem}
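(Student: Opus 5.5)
The plan is to apply the method of sub- and supersolutions, Theorem \ref{Teosubsup}, with a small multiple of a principal eigenfunction as subsolution and a large constant as supersolution. Strict positivity then comes from the strong maximum principle on the single class $\Om=\Om_l$ given by \eqref{hipstromaxprin}, Theorem \ref{ppmxc}.

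\textbf{Subsolution.} Since $\la_p(L-\alpha I)<0$, I fix $\ep>0$ with $\la_p(L-\alpha I)+\ep<0$. The first task is an eigenfunction $\varphi\geq0$, $\varphi\not\equiv0$, essentially bounded, such that $(L-\alpha)\varphi\leq(\la_p(L-\alpha I)+\ep/2)\varphi$.
\begin{itemize}
\item If $\la_p(L-\alpha I)<\operatorname*{ess\,inf}(a-\alpha)$, Theorem \ref{thautovautof} gives a genuine eigenfunction, and we may take it nonnegative via $|u|$. Boundedness follows from $\varphi=(a-\alpha-\la_p)^{-1}\int k\varphi$ together with $k\in L^\infty$, $\mu(\Om)<\infty$ and $\varphi\in L^2\subset L^1$. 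Positivity a.e. follows by the iteration argument of Theorem \ref{ppmxc}.
\item Otherwise, $\la_p$ may only be a spectral value in $R_a$. Then I would instead take $\varphi=\chi_A$ on a small set where $a-\alpha$ is near its essential infimum, or truncate a minimizing sequence. The Rayleigh quotient is then only approximately $\la_p$, which yields the needed inequality on the support with error at most $\ep/2$.
\end{itemize}
I expect this case distinction, i.e. producing a bounded nonnegative test function with $(L-\alpha)\varphi\leq-\tfrac{\ep}{2}\varphi$ pointwise, to be the main technical obstacle.

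With $\varphi$ normalized so that $\|\varphi\|_\infty=1$, set $\underline u=\eta\varphi$ with $\eta\leq\delta(\ep/2)$. Then \eqref{hipsubsol} gives $F(x,\underline u)\geq(\alpha-\ep/2)\underline u\geq L\underline u$ where $\underline u>0$. Where $\underline u=0$, \eqref{Fnoneg0} gives $F(x,0)\geq0\geq L\underline u$, because there $L\underline u=-\eta\int k\varphi\leq0$.

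\textbf{Supersolution.} Take $\overline u\equiv R$ constant with $R=R(M)$ for $M=\|a_0\|_\infty$. Then $L\overline u=a_0R\geq-MR>F(x,R)$ by \eqref{hipsupsol}. Since $\mu(\Om)<\infty$, constants lie in $L^2_\mu$, and $\underline u\leq\overline u$ once $\eta\leq R$.

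\textbf{Conditions (i) and (iv) of Theorem \ref{Teosubsup}.} Hypothesis (iv) is not assumed, so I will need $F$ locally Lipschitz in $s$ uniformly in $x$, or alternatively run the monotone iteration with a truncation. For (i) I need $F(x,\underline u),F(x,\overline u)\in L^2$, which follows if $F$ is bounded on $\Om\times[0,R]$. I would add the standing assumption that $F$ is locally bounded and one-sided Lipschitz on bounded sets, as is implicit in the applications (logistic, Gompertz). Theorem \ref{Teosubsup} then gives $u$, $u_\star$ and $u^\star$ between $\underline u$ and $\overline u$. Since the construction holds for every small $\eta$ and every large $R$, every positive bounded solution is captured, so $u_\star$ and $u^\star$ are extremal.

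\textbf{Strict positivity.} Writing $u=a^{-1}\big(F(x,u)+\gamma u+\int k u\big)$ after shifting, $u\geq\underline u\geq0$ and $u\not\equiv0$. The iteration $A_{i+1}$ of Theorem \ref{ppmxc} over the single class $\Om_l$ then gives $u>0$ a.e.

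\textbf{Uniqueness under \eqref{hipunicidad}.} Let $u_1,u_2$ be two positive solutions. Test the equation for $u_1$ with $(u_1^2-u_2^2)/u_1$ and the equation for $u_2$ with $(u_1^2-u_2^2)/u_2$, and subtract (a Picone/Brezis--Oswald type argument). By the identity \eqref{identk}, the diffusion part becomes
\[
\tfrac12\int\!\!\int k(x,y)\Big[\big(u_1(x)-u_1(y)\big)\Big(\tfrac{u_1^2-u_2^2}{u_1}(x)-\tfrac{u_1^2-u_2^2}{u_1}(y)\Big)-(\text{same with }u_2)\Big]\geq0,
\]
which is the discrete Picone inequality. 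The reaction part is
\[
\int\Big(\tfrac{F(x,u_1)}{u_1}-\tfrac{F(x,u_2)}{u_2}\Big)(u_1^2-u_2^2),
\]
which is $<0$ unless $u_1=u_2$, by \eqref{hipunicidad}. This forces $u_1=u_2$. To keep the test functions in $L^2$, I need the quotient $u_1/u_2$ to be bounded. For that I would use $u_i\geq\underline u$ with a uniform positive lower bound, or else replace $u_2$ by $u_2+\tau$ and let $\tau\to0$.
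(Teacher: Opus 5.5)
Your existence and positivity argument is essentially the paper's. You use the same case split according to whether $\la_p(L-\alpha I)<\operatorname*{ess\,inf}(a-\alpha)$, with $\tau\varphi$ or $\delta\chi_{A_\ep}$ as subsolution, a constant $R(M)$ as supersolution, and the strong maximum principle on the single class (the paper applies it to $\underline u_1$ rather than to $u$). A few corrections on this part:
\begin{enumerate}
\item In the second case no Rayleigh-quotient approximation is involved. The pointwise inequality follows directly from $A\subset\{a-\alpha<\la_p(L-\alpha I)+\ep/2\}$ and from discarding $-\int_A k\,d\mu\leq0$. Drop the ``truncated minimizing sequence'' option, which would not give a pointwise bound.
\item The extra hypotheses you impose (local boundedness of $F$ and a one-sided Lipschitz bound on bounded sets) are exactly what the paper uses tacitly when it chooses $\gamma$ in \eqref{defgammamon} and asserts \eqref{hipsumF}.
\item In the positivity step the representation should read $u=(a+\gamma)^{-1}\big(F(x,u)+\gamma u+\int_\Om ku\,d\mu\big)$.
\end{enumerate}

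Where you genuinely differ is uniqueness. The paper introduces $L_{u_i}v=\big(a-F(x,u_i)/u_i\big)v-\int_\Om kv\,d\mu$, uses $\la_p(L_{u_i})=0$, and compares Rayleigh quotients at $u_2$. This needs $u_1\leq u_2$, which the paper simply assumes; it could be justified by comparing any solution with a global maximal one, built from the subsolution $0$ and an a priori $L^\infty$ bound, but this is not done. Your Brezis--Oswald/Picone argument needs neither the ordering nor any spectral information. The $a_0$-terms cancel, the discrete Picone inequality makes the kernel term nonnegative, and \eqref{hipunicidad} makes the reaction term pointwise nonpositive, forcing $u_1=u_2$. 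The paper's route is shorter given its spectral machinery; yours is more robust but requires integrability bookkeeping.

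To carry out that bookkeeping:
\begin{enumerate}
\item Regularize both functions, $U_i=u_i+\tau$, test with $(U_1^2-U_2^2)/U_i$, and use $LU_i=F(x,u_i)+a_0\tau$.
\item Let $\tau\to0$ by dominated convergence. This works because every nonnegative solution is bounded, from $au-F(x,u)=\int_\Om ku\,d\mu\leq\|k\|_{L^\infty}\|u\|_{L^1_\mu}$ and \eqref{hipsupsol}.
\item It also uses $F(x,u_i)/u_i\in L^\infty_\mu(\Om)$: it is at most $a$, and bounded below by \eqref{hipunicidad} and local boundedness of $F$. The paper's route needs this bound as well, for $L_{u_i}$ to be bounded.
\end{enumerate}
Your other option, a uniform positive lower bound coming from $\underline u$, is not available: $\underline u$ vanishes on parts of $\Om$, and an arbitrary solution need not lie above it. Likewise, your claim that $u_\star,u^\star$ are extremal among all positive solutions is not justified at the lower end, since a positive solution need not dominate any $\eta\varphi$. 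The statement only requires extremality in $[\underline u,\overline u]$, which Theorem \ref{Teosubsup} provides.
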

\begin{proof}
	The idea is to apply the method of sub and supersolutions established in Theorem~\ref{Teosubsup}, since we seek a nonnegative and nontrivial solution. By hypothesis, we know that $\la_p(L-\alpha I)<0$. Taking Theorem \ref{thautovautof} into account, we distinguish two cases:
	\begin{itemize}
		\item Case $\la_p(L-\alpha I)<\operatorname*{ess\,inf}\{a-\alpha \}$. By Theorem \ref{thautovautof}, there exists $\varphi\geq0$, $\varphi\not\equiv0$ an eigenfunction in $L_\mu^\infty(\Om)$ thanks to
		 $k\in L^\infty_{\mu\otimes\mu}(\Om\times\Om)$. Let $\tau>0$, we define
		$$\underline{u}=\tau\varphi.$$
		For $\underline{u}$ to be a subsolution, it must be satisfied
		$$L\underline{u}\leq F(x,\underline u)\Leftrightarrow (L-\alpha)\underline{u}\leq F(x,\underline u)-\alpha\underline{u}\Leftrightarrow\tau\varphi\la_p(L-\alpha I)\leq F(x,\tau \varphi)-\alpha\tau\varphi,\ \hbox{a.e. }\Om.$$
		Let us consider $\ep>0$. If we choose $\tau$ such that $\tau\|\varphi\|_{L^\infty_\mu(\Om)}\leq\delta(\ep)$, then condition (\ref{hipsubsol}) implies
		$$F(x,\tau\varphi)\geq(\alpha-\ep)\tau\varphi\ \hbox{a.e. }\Om.$$
		It thus suffices to verify
		$$\tau\varphi\la_p(L-\alpha I)\leq\tau\varphi(\alpha-\ep-\alpha)\Leftrightarrow\varphi\la_p(L-\alpha I)\leq-\ep\varphi,\ \hbox{a.e. }\Om.$$
		This inequality trivially holds wherever $\varphi(x)=0$. On the other hand, if $\varphi(x)>0$, it is reduced to 
		$$\la_p(L-\alpha I)\leq-\ep,$$
		which is satisfied due to the assumption $\la_p(L-\alpha I)<0$ by taking $\ep$ small enough.
		\item Case $\la_p(L-\alpha I)=\operatorname*{ess\,inf}\{a-\alpha \}<0$. If we consider $\ep>0$, then (\ref{hipsubsol}) implies that there exists $\delta(\ep)>0$ such that $F(x,\delta)\geq(\alpha-\ep)\delta$. We denote by $A_\ep$ the set
		\beq\label{Ar}A_\ep=\{x\in\Om\, |\, a(x)-\alpha(x)<\la_p(L-\alpha I)+\ep\}.\eeq
		We observe that $\mu(A_\ep)>0$, hence we can define the following nonnegative and nontrivial function
		$$\underline{u}=\delta\chi_{A_\ep}.$$
		Similarly to the previous case, it must be satisfied
		$$(L-\alpha)\delta\chi_{A_\ep}\leq F(x,\delta \chi_{A_\ep})-\alpha\delta\chi_{A_\ep},\ \hbox{a.e. in }\Om,$$
		equivalently,
		$$(a-\alpha)\delta\chi_{A_\ep}-\delta\int_{A_\ep}k(x,y)\,dy\leq
		F(x,\delta \chi_{A_\ep})-\alpha\delta\chi_{A_\ep}.$$
		Taking into account $-\delta\int_{A_\ep}k(x,y)\,dy\leq0$, it suffices to show that
		$$(a-\alpha)\delta\chi_{A_\ep}\leq F(x,\delta \chi_{A_\ep})-\alpha\delta\chi_{A_\ep}.$$
		Almost every $x\in\Om$ such that $\chi_{A_\ep}(x)=0$ the inequality holds thanks to \eqref{Fnoneg0}. Furthermore, almost every $x\in\Om$ such that $\chi_{A_\ep}(x)=1$, by applying again \eqref{hipsubsol}, it is enough to prove
		$$a-\alpha\leq-\ep,\ \hbox{a.e. }A_\ep.$$
		Condition \eqref{Ar} implies
		$$a-\alpha<\la_p(L-\alpha I)+\ep,\ \hbox{a.e. }A_\ep,$$
		hence it suffices to show that
		$$2\ep+\la_p(L-\alpha I)\leq0.$$
			which is satisfied due to the assumption $\la_p(L-\alpha I)<0$ by taking $0<\ep\leq-\la_p(L-\alpha I)/2$.		
	\end{itemize}
	We have found nonnegative and nontrivial subsolutions in both cases. We look for a supersolution next. For this, we consider $M>0$, thus there exists $R(M)>0$ such that (\ref{hipsupsol}) holds. For $R(M)$ to be a supersolution, we need the following to be satisfied:
	$$L\overline u\geq F(x,\overline u)\Leftrightarrow aR-R\hat k\geq F(x,R),\ \hbox{ a.e. }x\in\Om.$$
	Since $F(x,R)<-MR$, it is enough to prove
	$$aR-R\hat k\geq-MR\Leftrightarrow M\geq\hat k-a,\ \hbox{a.e. in }\Om.$$
	As a result, we choose
	$$M\geq\max\left\{\|\hat k-a\|_{L^\infty_\mu(\Om)},\tau\|\varphi\|_{L^\infty_\mu(\Om)},-\frac{\la_p(L-\alpha I)}{2}\right\}\quad\overline u=R(M).$$
	Once $\underline u\leq\overline u$ have been found, we have that assumptions \eqref{hipsumF}, \eqref{submsup} and \eqref{subysyp} of Theorem \ref{Teosubsup} hold. To verify that \eqref{hipcrF} holds, we take
	\beq\label{defgammamon}\ga\geq\frac{F(x,s_1)-F(x,s_2)}{s_2-s_1},\quad\forall s_1,s_2\in[0,M],\quad s_1< s_2,\quad \hbox{a.e. }x\in\Om.\eeq
	Consequently, thanks to Theorem \ref{Teosubsup}, we conclude that there exists $u$ solution of \eqref{geneq2} with $\underline{u}\leq u\leq\overline{u}$ a.e. in $\Om$ and, hence, $u$ is nonnegative and nontrivial.
	
	Moreover, we prove that the nontrivial solution obtained is strictly positive. We remember that in the method of sub and supersolutions we build two monotone sequences $\{\underline{u}_n\}$ and $\{\overline{u_n}\}$ that converge to $u_\star$ (minimal solution) and $u^\star$ (maximal solution), respectively. In particular, we recall that $\underline{u}_0=\underline{u}$ and $\underline{u}_1$ is the solultion of
	$$L\underline{u}_{1}=F(x,\underline{u})$$
	On the other hand, we know that $F$ is monotone. By choosing $s_1=0$ and $s_2=\underline u$ in \eqref{defgammamon} and applying \eqref{Fnoneg0}, we have
	$$F(x,\underline{u})+\ga\underline{u}\geq F(x,0)\geq0,$$
	which implies
	$$L\underline{u}_1+\ga\underline{u}=F(x,\underline{u})+\ga\underline{u}\geq0.$$
	Hence, thanks to \eqref{hipstromaxprin} and Corollary \ref{ppmafes}, we can apply the strong maximum principle to conclude $\underline{u}_1>0$ a.e. in $\Om$. Since $u\geq\underline{u}_1$, the solution is strictly positive.
	
	To finish the proof, we assume \eqref{hipunicidad} and we consider $u_1$, $u_2$ strictly positive solutions of \eqref{geneq2} with $u_1\leq u_2$. We define the operators $L_{u_i}:L^2_\mu(\Om)\to L^2_\mu(\Om)$, $i=1,2$ as follows
	$$L_{u_i} v:=\left(a-\frac{F(x,u_i)}{u_i}\right)v-\into k(x,y)v(y)\, d\mu(y),\quad \forall v\in L^2_\mu(\Om),\ i=1,2.$$
	Since $u_1$ is solution of \eqref{geneq2}, it is a positive eigenfunction associated with the zero eigenvalue  of $L_{u_1}$. Thanks to the strong maximum principle, we conclude that $\la_p(L_{u_1})=0$. Analogously, we obtain $\la_p(L_{u_2})=0$. By using the definition of principal spectral value \eqref{defautovppal} and by applying \eqref{hipunicidad}, we have	
	$$0={\dis\into \left(a-\frac{F(x,u_1)}{u_1}\right)u_1^2(x)\,d\mu(x)-\int_{\Om\times\Om} k(x,y)u_1(y)d\mu(y) \,u_1(x)d\mu(x)\over \|u_1\|_{L_\mu^2(\Om)}^2}$$
	$$=\min_{v\not \equiv0}{\dis\into \left(a-\frac{F(x,u_1)}{u_1}\right)v^2(x)\,d\mu(x)-\int_{\Om\times\Om} k(x,y)v(y)d\mu(y) \,v(x)d\mu(x)\over \|v\|_{L_\mu^2(\Om)}^2}$$
	$$\leq{\dis\into \left(a-\frac{F(x,u_1)}{u_1}\right)u_2^2(x)\,d\mu(x)-\int_{\Om\times\Om} k(x,y)u_2(y)d\mu(y) \,u_2(x)d\mu(x)\over \|u_2\|_{L_\mu^2(\Om)}^2}$$
	$$<{\dis\into \left(a-\frac{F(x,u_2)}{u_2}\right)u_2^2(x)\,d\mu(x)-\int_{\Om\times\Om} k(x,y)u_2(y)d\mu(y) \,u_2(x)d\mu(x)\over \|u_2\|_{L_\mu^2(\Om)}^2}$$
	$$=\min_{v\not \equiv0}{\dis\into \left(a-\frac{F(x,u_2)}{u_2}\right)v^2(x)\,d\mu(x)-\int_{\Om\times\Om} k(x,y)v(y)d\mu(y) \,v(x)d\mu(x)\over \|v\|_{L_\mu^2(\Om)}^2}=0.$$
	This implies in particular
	$${\dis\into \left(a-\frac{F(x,u_1)}{u_1}\right)u_2^2(x)\,d\mu(x)-\int_{\Om\times\Om} k(x,y)u_2(y)d\mu(y) \,u_2(x)d\mu(x)\over \|u_2\|_{L_\mu^2(\Om)}^2}$$
	$$={\dis\into \left(a-\frac{F(x,u_2)}{u_2}\right)u_2^2(x)\,d\mu(x)-\int_{\Om\times\Om} k(x,y)u_2(y)d\mu(y) \,u_2(x)d\mu(x)\over \|u_2\|_{L_\mu^2(\Om)}^2}$$
	and hence
	$$\into \left(-\frac{F(x,u_1)}{u_1}+\frac{F(x,u_2)}{u_2}\right)u_2^2(x)\,d\mu(x)=0.$$
	Since $u_2>0$, we have
	$$\frac{F(x,u_1)}{u_1}=\frac{F(x,u_2)}{u_2}$$
	and therefore $u_1=u_2$. This concludes the proof
\end{proof}
\begin{remark}\label{multunic}
	We emphasize that the uniqueness result obtained in the previous theorem holds only when $\Omega=\Omega_\ell$. 
	If several sets $\Omega_\ell$ are present, the result applies separately to each of them. 
	Consequently, \uline{multiple nonnegative and nontrivial solutions of \eqref{geneq2} may exist}.
	
	More precisely, for each $\Omega_\ell$ satisfying the assumptions of Theorem~6.1, one may define $u$ either as the strictly positive solution given by the theorem or as the null function on $\Omega_\ell$. 
	Hence, different solutions can be constructed by choosing, independently on each $\Omega_\ell$, either the positive solution or zero. 
	Uniqueness therefore holds only within each class $\Omega_\ell$.
	
	Similarly, for every point $x\in \cal N$, one may take $u$ such that 
	\[
	au=F(x,u),
	\]
	which typically holds at least when $u=0$, since most biological models assume $F(x,0)=0$.
	
	%In order to obtain a nontrivial solution, $u$ must be nonzero on at least one set $\Omega_\ell$ or at some point of $\cal N$.
	
	%If one chooses $u$ to be positive whenever possible, this yields the existence of a unique strictly positive solution.	
\end{remark}

We also show the following nonexistence result:
\begin{theorem}\label{thnoexistence}
	Assume that \eqref{hipk} and \eqref{hipcrF} hold. Suppose that there exists 
	$\alpha \in L^\infty_\mu(\Omega)$ such that
	\begin{equation}\label{Falfanexist}
		F(x,u) \le \alpha(x)u, \quad \text{ a.e. } x \in \Omega .
	\end{equation}
	Then, the following holds:
	\begin{enumerate}[labelsep=0.5em,label=(\roman*)]
		\item If $\lambda_p(L-\alpha I)>0$, the unique solution of \eqref{geneq2} is $u\equiv 0$ a.e. $\Om$.
		\item If $\lambda_p(L-\alpha I)=0$ and
		\beq\label{medigcero}
		\mu\!\left(\left\{(x,u(x))\in\Omega\times L_\mu^2(\Omega):
		F(x,u(x))=\alpha(x)u(x)\right\}\right)=0,
		\eeq
		the unique solution of \eqref{geneq2} is $u\equiv 0$ a.e. $\Om$.
	\end{enumerate}
\end{theorem}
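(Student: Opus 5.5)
The argument tests the equation against the solution itself and compares the resulting energy with the variational characterization (\ref{defautovppal}) of $\la_p(L-\alpha I)$. Let $u\in L^2_\mu(\Om)$ be a solution of \eqref{geneq2}, so that $u\geq0$ and $Lu=F(x,u)$ a.e. in $\Om$. Since $u\geq0$, assumption \eqref{Falfanexist} gives $F(x,u)u\leq\alpha u^2$ a.e. If the pointwise bound in \eqref{Falfanexist} does not directly give $F(x,u)u\in L^1_\mu(\Om)$, I will use the identity $F(x,u)=Lu\in L^2_\mu(\Om)$, which supplies this integrability. Multiplying the equation by $u$ and integrating then yields
$$\into Lu\,u\,d\mu(x)=\into F(x,u)u\,d\mu(x)\leq\into \alpha u^2\,d\mu(x),$$
that is,
$$\left<(L-\alpha I)u,u\right>_{L^2_\mu(\Om)}=\into \big(F(x,u)-\alpha u\big)u\,d\mu(x)\leq 0 .$$

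For (i), the definition of the principal spectral value gives $\left<(L-\alpha I)u,u\right>\geq\la_p(L-\alpha I)\|u\|^2_{L^2_\mu(\Om)}$. Combining this with the previous inequality, we obtain $\la_p(L-\alpha I)\|u\|^2\leq0$. Since $\la_p(L-\alpha I)>0$, this forces $\|u\|_{L^2_\mu(\Om)}=0$, so $u\equiv0$ a.e. This part requires no use of \eqref{hipcrF} or of the connectivity structure.

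For (ii), set $\la_p(L-\alpha I)=0$. The same chain now gives
$$0\leq\left<(L-\alpha I)u,u\right>=\into\big(F(x,u)-\alpha u\big)u\,d\mu(x)\leq0,$$
so the integrand $(F(x,u)-\alpha u)u$, which is nonpositive, vanishes a.e. in $\Om$. Hence, for a.e. $x$, either $u(x)=0$ or $F(x,u(x))=\alpha(x)u(x)$. I read \eqref{medigcero} as saying that the set of points where $F(x,u(x))=\alpha(x)u(x)$ with $u(x)\neq0$ is $\mu$-null. With this reading, the set $\{u>0\}$ has measure zero, so $u\equiv0$ a.e.

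The main obstacle is interpreting \eqref{medigcero} correctly, since taken literally it is an odd set in $\Om\times L^2_\mu(\Om)$. At $u=0$ the equality $F(x,0)=0$ typically holds, so the condition must exclude the zero set, and I will state explicitly that this is the meaning used. If a literal reading forbids equality everywhere, including at points where $u(x)=0$, the argument becomes even more direct, because the integrand would then be strictly negative wherever $u\neq0$ and would have to vanish. A secondary point is the integrability of $F(x,u)u$, which is handled by $F(x,u)=Lu\in L^2_\mu(\Om)$ as above. Hypothesis \eqref{hipcrF} seems unnecessary for this argument, and I will only remark on that.
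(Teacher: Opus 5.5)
Your proof is correct and follows the paper's argument: you test the equation against $u$ and use $u\geq0$ together with \eqref{Falfanexist} and the variational definition \eqref{defautovppal} to get $\la_p(L-\alpha I)\|u\|^2_{L^2_\mu(\Om)}\leq\into (F(x,u)-\alpha u)u\,d\mu(x)\leq 0$, then split into the cases $\la_p(L-\alpha I)>0$ and $\la_p(L-\alpha I)=0$, exactly as the paper does. You also spell out three points the paper leaves implicit: that $F(x,u)u$ is integrable via $F(x,u)=Lu\in L^2_\mu(\Om)$, that \eqref{medigcero} should be read as excluding the set $\{u=0\}$, and that \eqref{hipcrF} is never used.
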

\begin{proof}
	We consider $u\in L^2_\mu(\Om)$ solution of \eqref{geneq2}. Then, we have
	$$Lu=F(x,u)\Leftrightarrow (L-\alpha)u=F(x,u)-\alpha u.$$
	Thus, by multiplying for u and integrating in $\Om$, we obtain
	$$\into (L-\alpha)u^2\,d\mu(x)=\into\big(F(x,u)-\alpha u\big)u\,d\mu(x).$$
	Moreover, thanks to the definition of first spectral value given in \eqref{defautovppal}, and by applying hipothesis \eqref{Falfanexist}, it is satisfied
	$$\la_p(L-\alpha I)\into u^2\, d\mu(x)\leq \into (L-\alpha)u^2\,d\mu(x)=\into\big(F(x,u)-\alpha u\big)u\,d\mu(x)\leq0.$$
	We distinguish the following cases:
	\begin{itemize}
		\item If $\lambda_p(L-\alpha I)>0$, then
		$$0\leq\into u^2\, d\mu(x)\leq0.$$
		Thus, $u\equiv 0$ a.e. $\Om$.
		\item If $\lambda_p(L-\alpha I)=0$, then
		$$0\leq\into\big(F(x,u)-\alpha u\big)u\,d\mu(x)\leq0.$$
		Hence, if \eqref{medigcero} holds, it implies $u\equiv 0$ a.e. $\Om$.
	\end{itemize}
\end{proof}
Finally, we prove a regularity result which allows us to ensure that the solution $u$ is continuous when $k$ and $F$ are also continuous:
\begin{theorem}\label{thregularity}
	Let $\Om$ be a locally compact Hausdorff space, $\mu$ a nonnegative Radon measure in  $\Om$, $a\in C^0(\Om)$, $k\in C^0(\Om\times\Om)\cap L^\infty_{\mu\otimes\mu}(\Om\times\Om)$ symmetric, nonnegative and $F\in C^0(\Om\times [0,\infty))$. Assume that 
	\beq\label{hipco1}{\rm card}\big\{s\in [0,\infty): \ a(x)s-F(x,s)=t\big\}\leq 1,\quad \forall\, (x,t)\in \Om\times (0,\infty),\eeq
	and that for every compact set $K\subset\Om$ it is satisfied
	\beq\label{hipco2} \lim_{s\to\infty} \big(a(x)s-F(x,s)\big)=\infty\ \hbox{ uniformly  in }K.\eeq
	Then, if there exists $u\in L^1(\Om)$ nonnegative solution of
	\beq\label{ecsol}Lu=a(x)u-\into k(x,y)u(y)\,d\mu(y)=F(x,u)\ \hbox{ a.e. }\Om,\eeq
	such that
	\beq\label{intopos}\into k(x,y)u(y)d\mu(y)>0,\quad\forall x\in\Om,\eeq
	$u$ is continuous in $\Om$.
\end{theorem}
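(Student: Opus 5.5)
Define $g(x):=\into k(x,y)u(y)\,d\mu(y)$ and $h(x,s):=a(x)s-F(x,s)$. Equation \eqref{ecsol} then reads $h(x,u(x))=g(x)$ for a.e.\ $x$, with $g(x)>0$ for every $x$ by \eqref{intopos}. The plan has three steps: show $g$ is continuous, show that $u$ agrees a.e.\ with the unique nonnegative root of $h(x,\cdot)=g(x)$, and show that this root depends continuously on $x$.

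\emph{Step 1: continuity of $g$.} Fix $x_0\in\Om$ and a sequence $x_n\to x_0$. The sequence version suffices if we work in a compact neighbourhood; for nets, the same dominated convergence argument applies with the uniform bound below. Since $k$ is continuous, $k(x_n,y)u(y)\to k(x_0,y)u(y)$ for every $y$. Moreover $|k(x_n,y)u(y)|\leq \|k\|_{L^\infty}|u(y)|$, which is integrable because $u\in L^1(\Om)$. This is exactly where $k\in L^\infty$ and $u\in L^1$ enter. Dominated convergence gives $g(x_n)\to g(x_0)$, so $g\in C^0(\Om)$ and $g>0$ everywhere.

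\emph{Step 2: definition of the candidate $v$.} For each $x$, put $t=g(x)>0$.
\begin{itemize}
\item Existence of a root: we need $h(x,0)\leq t$, i.e.\ $F(x,0)\geq -g(x)$. I do not see this following from the stated hypotheses alone. One can instead use the solution itself: $h(x,u(x))=g(x)$ holds a.e., so roots exist on a full-measure set. We then extend by continuity (Step 3), or argue via the intermediate value theorem using \eqref{hipco2}, which gives $h(x,s)>t$ for large $s$.
\item Uniqueness of the root: \eqref{hipco1} says $h(x,\cdot)=t$ has at most one nonnegative root.
\end{itemize}
So there is a well-defined $v(x)$ with $h(x,v(x))=g(x)$ wherever a root exists, and $u=v$ a.e.

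\emph{Step 3: continuity of $v$ (the main obstacle).} Take $x_n\to x_0$ inside a compact neighbourhood $K$ of $x_0$; local compactness makes this possible.
\begin{itemize}
\item Boundedness: $g$ is bounded on $K$, say by $T$. By \eqref{hipco2} there is $S$ with $h(x,s)>T$ for all $x\in K$ and $s\geq S$. Hence $v(x_n)<S$ for all $n$.
\item Identification of limits: let $s^*$ be any accumulation point of $v(x_n)$. Since $h$ is jointly continuous ($a$ and $F$ are continuous), passing to the limit gives $h(x_0,s^*)=g(x_0)>0$.
\item Conclusion: by uniqueness \eqref{hipco1}, $s^*=v(x_0)$. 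A bounded sequence whose only accumulation point is $v(x_0)$ converges to it, so $v$ is continuous.
\end{itemize}
The same compactness argument also settles existence of a root at every point. Given $x_0$, the set of points where a root exists has full measure. If $\mu$ charges every open set, or at least if points of full-measure sets accumulate at $x_0$, we can pick such points $x_n\to x_0$; their roots are bounded and any limit is a root at $x_0$.

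This is the delicate point: positivity of $g$ and the cardinality condition make the inversion well defined, while the uniform coercivity \eqref{hipco2} supplies the compactness. We conclude that $u$ coincides a.e.\ with the continuous function $v$, i.e.\ $u$ is continuous in $\Om$ up to choosing its representative.
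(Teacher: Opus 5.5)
Your proposal is correct and follows essentially the paper's route: invert $s\mapsto a(x)s-F(x,s)$ on $\{t>0\}$ via \eqref{hipco1}, use \eqref{hipco2} on a compact neighbourhood to confine the inverse to a bounded $s$-range, and compose with the continuous map $x\mapsto\into k(x,y)u(y)\,d\mu(y)$. Your cluster-point argument is a cleaner version of the paper's $\ep$--$\delta$ proof that $G^{-1}$ is continuous on $B$, and your density argument at points of the support of $\mu$ correctly handles the fact that \eqref{ecsol} holds only a.e., which the paper passes over by writing $(x,u(x))=G^{-1}\big(x,\into k(x,y)u(y)\,d\mu(y)\big)$ for every $x$.

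One small correction concerns Step~1. In a locally compact Hausdorff space that is not first countable, sequences do not suffice, even inside a compact neighbourhood, and dominated convergence fails along nets. So Step~1 does not yet give continuity of $x\mapsto\into k(x,y)u(y)\,d\mu(y)$. Instead, take a compact $K'$ with $\int_{\Om\setminus K'}|u|\,d\mu<\ep$ (by density of $C_c(\Om)$ in $L^1_\mu(\Om)$), and use joint continuity of $k$ to get $\sup_{y\in K'}|k(x,y)-k(x_0,y)|<\ep$ for $x$ near $x_0$. Step~3, by contrast, works verbatim with nets.
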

\begin{proof}
	We consider the map $G=(G_1,G_2):\Omega\times [0,\infty)\to \Omega\times\mathbb{R}$
	defined by
	\[
	G(x,s)=(x,a(x)s-F(x,s)), \qquad \forall\, (x,s)\in \Omega\times [0,\infty).
	\]
	
	Thanks to \eqref{hipco1}, by defining
	\[
	A=\big\{(x,s)\in \Omega\times [0,\infty):\ G_2(x,s)> 0\big\},
	\qquad 
	B=G(A),
	\]
	we deduce that $G$ is bijective from $A$ onto $B$, and consequently
	$G^{-1}:B\to A$ is well defined.
	
	We next prove that $G^{-1}$ is continuous on $B$. Let $(x_0,t_0)\in B$ and
	choose $s_0\geq 0$ such that
	\[
	a(x_0)s_0-F(x_0,s_0)=t_0,
	\]
	so that $(x_0,s_0)=G^{-1}(x_0,t_0)$. Let $U$ be a compact neighbourhood of $x_0$
	and let $\varepsilon>0$. To prove continuity, it suffices to show that there
	exists $\delta>0$ such that
	\begin{equation}\label{cont}
		G^{-1}\Big(\big(U\times(t_0-\delta,t_0+\delta)\big)\cap B\Big)
		\subset U\times(s_0-\varepsilon,s_0+\varepsilon).
	\end{equation}
	
	Taking
	$$\delta:=\inf\big\{|a(x)(s-s_0)-F(x,s)-F(x,s_0)|:\ x\in  U,\ |s-s_0|\geq \ep\big\}>0,$$
	let us first prove that this infimum is in fact a minimum. For this purpose, we use that by \eqref{hipco2}, there exists $M\geq0$ such that
	$$a(x)s-F(x,s)\geq 1+\delta,\quad\forall (x,s)\in U\times [0,\infty),\ \ s\geq  M.$$
	Hence,
	$$\delta=\inf\big\{|a(x)(s-s_0)-F(x,s)-F(x,s_0)|:\ x\in U,\ |s-s_0|\geq \ep,\ s\leq M\big\},$$
	and then the infimum is attained, thanks to the continuity of $a$ and $F$, and the compactness of $\{(x,s)\in U\times [0,M]:\ |s-s_0|\geq \ep\}.$

	Let $x\in U$, $t\in (t_0-\delta,t_0+\delta)$ such that $(x,t)\in B$,
	and define $s\geq 0$ by
	\[
	a(x)s-F(x,s)=t,
	\]
	so that $(x,s)=G^{-1}(x,t)$. If $|s-s_0|\geq \varepsilon$, then by the
	definition of $\delta$ we would have $|t-t_0|\geq \delta$, which contradicts
	the choice of $t$. Hence $|s-s_0|<\varepsilon$, and \eqref{cont} follows.
	
	Now, we observe that if
	$u$ is a solution of \eqref{ecsol}, then, thanks to \eqref{intopos}, we have
	\[
	(x,u(x))\in A, \quad
	(x,u(x)) = G^{-1}\Big(x,\int_\Omega k(x,y)u(y)\,d\mu(y)\Big),
	\quad \forall\,x\in \Omega.
	\]
	Thanks to the continuity of the integral term with respect to $x$, and the continuity of $G^{-1}$  we then have that $u$ is a continuous function
	on $\Omega$.
\end{proof}
Once we have proved these results, we show two biological models as examples where the results proved in this section can be applied. In both cases, we assume that $\Om$ is such that $\mu(\Om)<+\infty$ and $k\in L^\infty_{\mu\otimes\mu}(\Om\times\Om)$ satisfies \eqref{hipkeqgen}.

\subsection{Nonlocal logistic equation}
Let $\Om$ be such that $\mu(\Om)<+\infty$ and $k\in L^\infty_{\mu\otimes\mu}(\Om\times\Om)$ satisfying \eqref{hipkeqgen}. We consider the following problem of the form \eqref{geneq2}:
\beq\label{eqlog}Lu=a(x)u(x)-\into k(x,y)u(y)\,d\mu(y)=\lambda u-\nu u^p,\quad u\geq0,\ \hbox{ a.e. in }\Om,\eeq
where $\la\in L^\infty_\mu(\Om)$, $\nu\in L^\infty_\mu(\Om)$, $\operatorname*{ess\,inf}\, \nu>0$ and $p>1$.
In these conditions, a characterization of existence depending on $\la_p(L-\la I)$ holds in each $\Om_l\subset\Om$ given by Definition \ref{omlN}.
\begin{theorem}\label{TeoElo}
	There exists a unique strictly positive solution of \eqref{eqlog} in each $\Om_l\subset\Om$, $l\geq1$, if and only if $\la_p(L-\la I)<0$.
\end{theorem}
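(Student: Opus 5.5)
By the standing assumption \eqref{hipstromaxprin}, together with Theorem~\ref{Thscdes} and the Remark following it, we work on a single class $\Om=\Om_l$. Here the strong maximum principle of Corollary~\ref{ppmafes} is available. The nonlinearity is $F(x,s)=\la(x)s-\nu(x)s^p$. We prove the two implications separately.

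\emph{Sufficiency.} Assume $\la_p(L-\la I)<0$. We apply Theorem~\ref{thexistence} with $\alpha=\la\in L^\infty_\mu(\Om)$.
\begin{itemize}
\item Condition \eqref{Fnoneg0} holds since $F(x,0)=0$.
\item For \eqref{hipsubsol}, note $F(x,s)-(\la-\ep)s=s(\ep-\nu s^{p-1})\ge0$ whenever $0<s\le\delta(\ep):=(\ep/\|\nu\|_\infty)^{1/(p-1)}$.
\item For \eqref{hipsupsol}, write $F(x,s)\le(\|\la\|_\infty-\operatorname*{ess\,inf}\nu\, s^{p-1})s$. This is $<-Ms$ once $s^{p-1}>(M+\|\la\|_\infty)/\operatorname*{ess\,inf}\nu$, uniformly a.e. in $x$.
\item Condition \eqref{hipunicidad} holds because $F(x,s)/s=\la-\nu s^{p-1}$ is strictly decreasing in $s$, since $\nu>0$ and $p>1$.
\end{itemize}
Theorem~\ref{thexistence} then gives a strictly positive solution and uniqueness among strictly positive solutions. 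One point needs checking: the uniqueness argument there compares $u_1\le u_2$. For two arbitrary positive solutions, the same chain of inequalities works directly, since it only uses strict monotonicity of $F(x,s)/s$ and $\la_p(L_{u_i})=0$. Alternatively, $u_1$ and $u_2$ can be compared with the maximal solution $u^\star$ obtained from a large constant supersolution. Such a constant dominates every solution: a solution is essentially bounded because $\la u-\nu u^p\le Lu$ forces $u\le R$ by the weak maximum principle applied to $R-u$ after shifting.

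\emph{Necessity.} This is the step needing care. Suppose $u>0$ a.e. solves \eqref{eqlog}, and consider
\[
L_u v:=\big(a-\la+\nu u^{p-1}\big)v-\into k(x,y)v(y)\,d\mu(y).
\]
Then $L_u u=0$ with $u>0$. We claim $\la_p(L_u)=0$. Indeed, $\la_p(L_u)\le0$ by testing with $u$. If $\la_p(L_u)<0$, it is an eigenvalue by Theorem~\ref{thautovautof}, because $\la_p(L_u)<0<\operatorname*{ess\,inf}(a-\la+\nu u^{p-1})$; the last inequality follows from $(a-\la+\nu u^{p-1})u=\into k u\,d\mu>0$, which the strong maximum principle gives. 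Its nonnegative eigenfunction $\varphi$ is then positive by the strong maximum principle. The symmetry of $k$ gives $\int (L_u\varphi)u=\int \varphi L_u u=0$, whereas the left-hand side equals $\la_p(L_u)\int\varphi u<0$, a contradiction.

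Having $\la_p(L_u)=0$, we test the Rayleigh quotient of $L-\la I$ with $u$:
\[
\la_p(L-\la I)\|u\|^2\le \langle (L-\la)u,u\rangle=\langle L_u u,u\rangle-\into\nu u^{p+1}d\mu=-\into\nu u^{p+1}d\mu<0.
\]
Hence $\la_p(L-\la I)<0$. The calculation here is actually direct, so the only delicate point is justifying that $u\in L^\infty$ (so that $\nu u^{p-1}$ is bounded and $L_u$ is bounded) when one needs it. For this last inequality only $u\in L^{p+1}$ is needed, which follows from the equation itself.
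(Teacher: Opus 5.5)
Your proof is correct and matches the paper's: sufficiency is Theorem~\ref{thexistence} with $\alpha=\la$, and your necessity computation $\la_p(L-\la I)\|u\|^2\le\langle (L-\la)u,u\rangle=-\into\nu u^{p+1}\,d\mu<0$ is the argument of Theorem~\ref{thnoexistence} with $\alpha=\la$, done in one step instead of splitting into the cases $\la_p>0$ and $\la_p=0$. The detour proving $\la_p(L_u)=0$ is unnecessary, since only $L_uu=0$ enters the final identity; also, in that detour $a-\la+\nu u^{p-1}>0$ a.e.\ gives only $\operatorname*{ess\,inf}\ge0$, not $>0$, though $\ge 0$ still suffices to invoke Theorem~\ref{thautovautof} there.
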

\begin{proof}
	Clearly, $\la u-\nu u^p$ satisfies (\ref{Fnoneg0}),\eqref{hipsupsol} and \eqref{hipunicidad}. Moreover, by taking $\alpha=\lambda$ we also have \eqref{hipsubsol} and \eqref{Falfanexist}. Therefore, the result is a direct consequence of Theorems \ref{thexistence} and \ref{thnoexistence}.
\end{proof}
Furthermore, in $\cal N$ \eqref{eqlog} reduces to solving the equation
$$au(x)=\lambda(x) u(x)-\nu(x) u^p (x),\quad\hbox{a.e. }x\in\cal N.$$
In this case, the solution is 
$$u(x)=\left\{\ba{ll}\dis \left({\lambda(x)-a(x)\over \nu(x)}\right)^{1/(p-1)} & \hbox{ if }\lambda(x)-a(x)>0\\ \ecart\dis 0 &\hbox{ otherwise.}\ea\right.$$

We can apply Theorem \ref{thregularity} to obtain the following regularity result:
\begin{theorem}\label{threglog}
	We assume the hypothesis imposed at the begining of the section together with $\Om$  Hausdorff, locally compact, $\la\in C^0(\Om)$, $\nu\in C^0(\Om)$, $k\in C^0(\Om\times\Om)$ and $\mu$ a nonnegative Radon measure. We consider $\Om_l$, $l\geq 1$, the equivalence classes associated to the relation given by Definition \ref{defCEd1}. Then, in every $\Om_l\subset\Om$ such that there exists a positive solution $u$ in $\Om_l$, such solution satisfies $u\in C^0(\Om)$.
\end{theorem}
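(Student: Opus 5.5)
We will obtain Theorem~\ref{threglog} as a direct application of Theorem~\ref{thregularity} on each class $\Om_l$, with $F(x,s)=\la(x)s-\nu(x)s^p$. Fix $l\ge1$ such that a positive solution $u$ exists in $\Om_l$. Since the equivalence classes of Definition~\ref{defCEd1} are open when $k$ is continuous, $\Om_l$ with the restricted Radon measure is again a locally compact Hausdorff space. By Theorem~\ref{Thscdes}, $u$ restricted to $\Om_l$ solves
\[
a u-\int_{\Om_l}k(x,y)u(y)\,d\mu(y)=F(x,u).
\]
So it suffices to check the hypotheses of Theorem~\ref{thregularity} on $\Om_l$. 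The regularity assumptions are immediate: $F\in C^0(\Om\times[0,\infty))$ because $\la,\nu$ are continuous. Also $a=a_0+\hat k$ must be continuous; here I will use that $\hat k(x)=\int k(x,y)\,d\mu(y)$ is continuous, by dominated convergence, since $k$ is continuous and bounded and $\mu(\Om)<\infty$, together with the standing continuity of $a_0$ in this setting. Finally, $u\in L^2_\mu\subset L^1_\mu$ because $\mu(\Om)<\infty$.

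\textbf{Checking \eqref{hipco1} and \eqref{hipco2}.} Set $h_x(s)=a(x)s-F(x,s)=(a(x)-\la(x))s+\nu(x)s^p$. Since $\nu>0$ and $p>1$, $h_x$ is strictly convex on $[0,\infty)$ with $h_x(0)=0$. A strictly convex function vanishing at $0$ takes each value $t>0$ at most once on $[0,\infty)$: it is first nonpositive (possibly decreasing) and then strictly increasing once positive. This gives \eqref{hipco1}. For \eqref{hipco2}, on a compact $K$ the functions $a-\la$ and $\nu$ are bounded and $\nu\ge\operatorname*{ess\,inf}\nu>0$. Hence $h_x(s)\ge c s^p-Cs\to\infty$ uniformly on $K$.

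\textbf{Main step: condition \eqref{intopos}.} I expect this to be the main obstacle: showing $\int_{\Om_l}k(x,y)u(y)\,d\mu(y)>0$ for \emph{every} $x\in\Om_l$, not just almost every $x$. By Theorem~\ref{thexistence}, the positive solution is strictly positive, so $u>0$ a.e.\ in $\Om_l$. Fix $x\in\Om_l$. Since $x$ belongs to a class and is not in $\mathcal N'$, there is $y_0$ with $k(x,y_0)>0$. By continuity, $k(x,\cdot)>0$ on an open neighbourhood $V$ of $y_0$. Then:
\begin{itemize}
\item $V\subset\Om_l$, since $y\stackrel{\cal R'}\sim x$ for every $y\in V$;
\item $\mu(V)>0$, provided $\mu$ charges nonempty open sets. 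I would impose this by taking $\mu$ to have full support, or else by replacing $\Om_l$ with its intersection with the support of $\mu$.
\end{itemize}
Hence the integral is at least $\int_V k(x,y)u(y)\,d\mu(y)>0$.

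Theorem~\ref{thregularity} then gives that $u$ is continuous on $\Om_l$. Combining this on each class with the explicit formula on $\mathcal N$, which is continuous in $x$ where it is defined, yields the stated continuity.
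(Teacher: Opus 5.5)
Your proposal follows the paper's proof. You check \eqref{hipco1}, where your strict-convexity argument is a compact substitute for the paper's sign analysis of $\partial_s G_2$. You then check \eqref{hipco2} and \eqref{intopos}, and apply Theorem~\ref{thregularity} on $\Om_l$.

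The only real difference is at \eqref{intopos}, which the paper simply asserts. Your argument there is more careful, and an assumption such as full support of $\mu$ is genuinely what makes it work. Merely intersecting $\Om_l$ with $\operatorname{supp}\mu$ would not suffice, since $k(x,\cdot)$ may be positive only off the support. Your final gluing with $\mathcal N$ is unnecessary, because the intended conclusion is $u\in C^0(\Om_l)$.
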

\begin{proof}
	In order to prove that (\ref{hipco1}) is satisfied, we take $G_2:\Om\times [0,\infty)\to\RR$ as the second component of $G(x,s)=\Big(x,\big(a(x)-\la(x)\big)s+\nu(x) s^p\Big)$,
	and we observe that
	$$\partial_s G_2(x,s)=a(x)-\la(x)+p\nu(x)s^{p-1}.$$
	Therefore, for $x$ fixed we have:
	
	If $a(x)-\la(x)\geq0$, then $G_2(x,.)$ is strictly increasing. 
	
	Otherwise, $G_2(x,.)$ satisfies
	$$\ba{c} G_2(x,0)=0,\quad G_2(x,.)\hbox{ strictly decreasing in  }(0,s_x],\\ \ecart\dis G_2(x,.)\hbox{ strictly increasing in  }[s_x,\infty),\quad\lim_{s\to\infty}G(x,s)=\infty,\ea$$
	with
	$$s_x:=\left(\frac{\la(x)-a(x)}{p\nu}\right)^{\frac{1}{p-1}}.$$
	Therefore there exists $r_x>s_x$ such that $G_2(x,.)<0$ in $(0,r_x),$ $G_2(x,.)>0$ and strictly increasing in $(r_x,\infty)$. 
	
	Thus in both cases, for every $t>0$ and every $x\in \Om$, there exists a unique $s>0$ such that $G_2(x,s)=t$.
	
	Furthermore, let $K\subset\Om$ compact and $x\in K$. We have
	$$\big(a(x)-\la(x)\big)s+\nu(x) s^p\geq \Big(\min_{ K}\big(a-\la\big)s+\min_{K}\nu s^p\Big)\to \infty\ \hbox{ if }s\to\infty,$$
	so \eqref{hipco2} also holds.
	
	Finally, if we are in the conditions of Theorem \ref{TeoElo}, there exists a unique strictly positive solution $u\in L^2(\Om_l)$ which will satisfy 
	$$\int_{\Om_l}k(x,y)u(y)d\mu(y)>0.$$
	Then, Theorem \ref{thregularity} proves  $u\in C^0(\Om_l)$.
\end{proof}
We present some numerical simulations with this model (as it is one of the most classical models in the literature) in order to illustrate the large variety of problems covered by our framework. The simplest case to consider corresponds to the finite-dimensional setting. In particular, we have 
\begin{example}[Discrete case]
	Assume that $\Omega=\{1,2,\dots,n\}$ is a finite set. 
	We consider the discrete Radon measure given by $\mu(\{i\})=1$ for every $i=1,\dots,n$. 
	In this setting, any function $u:\Omega\to\mathbb{R}$ can be identified with a vector in $\mathbb{R}^n$, and the kernel 
	$k$ can be represented by a matrix $k=(k_{ij})\in\mathcal{M}^{n\times n}$.
	
	If the matrix $k$ is stochastic and symmetric, namely $k_{ij}=k_{ji}\ge0$ and 
	$\sum_{j=1}^n k_{ij}=1$, then $k_{ij}$ represents the probability of moving 
	from $i$ to $j$ per unit time, and the associated dispersal process is a stationary Markov chain with transition matrix $k$. 
	
	For each node $i$ (which may represent, for instance, a city or a spatial patch), 
	the equation becomes
	\[
	a(i)u(i)-\sum_{j=1}^n k(i,j)u(j)
	=
	\lambda(i)u(i)-\nu(i)u^2(i),
	\qquad i=1,\dots,n.
	\]
	
	Denoting
	\[
	a_i:=a(i), \quad 
	u_i:=u(i), \quad 
	\lambda_i:=\lambda(i), \quad 
	\nu_i:=\nu(i),
	\]
	the problem reduces to solving the nonlinear system
	\[
	a_i u_i-\sum_{j=1}^n k_{ij}u_j
	=
	\lambda_i u_i-\nu_i u_i^2,
	\qquad i=1,\dots,n.
	\]
	
	In matrix form, this can be written as
	$$\left(\ba{c}
	(a_1-\la_1)u_1\\\vdots\\(a_n-\la_n)u_n
	\ea\right)-
	\left(\ba{ccc}
	k_{11}&\hdots&k_{1n}\\
	\vdots&\ddots&\vdots\\
	k_{n1}&\hdots&k_{nn}
	\ea\right)
	\left(\ba{c}
	u_1\\\vdots\\u_n
	\ea\right)=-\left(\ba{c}
	\nu_1u^2_1\\\vdots\\\nu_nu^2_n
	\ea\right)$$
	
	Therefore, in the discrete framework the nonlocal problem reduces to a 
	finite-dimensional nonlinear system, which can be interpreted as a logistic-type 
	equation on a weighted graph. 
	
	For instance, to illustrate this, we consider a particular case where $n=8$ and the transition matrix $k$ is of the form
	$$k=\left(
	\ba{cccccccc}
	k_{11} & k_{12} & 0 & 0 & 0 & 0 & 0 & 0\\
	k_{12} & k_{22} & k_{23} & 0 & 0 & k_{26} & 0 & 0\\
	0 & k_{23} & k_{33} & 0 & 0 & 0 & k_{37} & 0\\
	0 & 0 & 0 & k_{44} & 0 & 0 & 0 & k_{48}\\
	0 & 0 & 0 & 0 & 0 & 0 & 0 & 0\\
	0 & k_{26} & 0 & 0 & 0 & k_{66} & 0 & 0\\
	0 & 0 & k_{37} & 0 & 0 & 0 & k_{77} & 0\\
	0 & 0 & 0 & k_{48} & 0 & 0 & 0 & k_{88}
	\ea
	\right).$$
	The system can be visualized through the associated 
	state graph:
	
	\begin{figure}[H] 
		\centering 
		\begin{tikzpicture}[->, %Líneas direccionadas
			>=stealth', %Puntas de flecha rellenas
			shorten >=1pt,
			auto,
			node distance=2cm,] %distancia mínima entre nodos
			
			\node[state] (1) {1};
			\node[state,right of = 1] (2) {2};
			\node[state,right of = 2] (3) {3};
			\node[state,right of = 3] (4) {4};
			\node[state,below of = 1] (5) {5};
			\node[state,right of = 5] (6) {6};
			\node[state,right of = 6] (7) {7};
			\node[state,right of = 7] (8) {8};
			
			\draw (1) edge[loop above] node{$k_{11}$} (1);
			\draw (2) edge[loop above] node{$k_{22}$} (2);
			\draw (3) edge[loop above] node{$k_{33}$} (3);
			\draw (4) edge[loop above] node{$k_{44}$} (4);
			
			%\draw (5) edge[loop below] node{$k_{55}$} (5);
			\draw (6) edge[loop below] node{$k_{66}$} (6);
			\draw (7) edge[loop below] node{$k_{77}$} (7);
			\draw (8) edge[loop below] node{$k_{88}$} (8);
			
			\draw (1) edge[bend right,above] node{$k_{12}$} (2);
			\draw (2) edge[bend right] (1);
			\draw (2) edge[bend right,above] node{$k_{23}$} (3);
			\draw (3) edge[bend right] (2);
			
			\draw (2) edge[bend right,right] node{$k_{26}$} (6);
			\draw (6) edge[bend right] (2);
			\draw (3) edge[bend right,right] node{$k_{37}$} (7);
			\draw (7) edge[bend right] (3);
			\draw (4) edge[bend right,right] node{$k_{48}$} (8);
			\draw (8) edge[bend right] (4);
		\end{tikzpicture}
		\caption{States diagram}
	\end{figure}
	In this case, 
	$$\Om=\mathcal{N}\cup\Om_1\cup\Om_2,$$
	where $\mathcal{N}=\{5\}$, $\Om_1=\{1,2,3,6,7\}$ and $\Om_2=\{4,8\}$. Thanks to Theorem \ref{Thscdes}, the problem can be solved independently in $\mathcal{N}$, $\Om_1$ and $\Om_2$. Therefore, the problem is decomposed as follows:
	$$L_{\mathcal{N}}-\la_\mathcal{N}I:=(a_5-\la_5)u_5=-\nu_5 u_5^2,$$
	$$L_{\Om_1}-\la_{\Om_1}I:=\left(\ba{l}
	(a_1-\la_1)u_1\\
	(a_2-\la_2)u_2\\
	(a_3-\la_3)u_3\\
	(a_6-\la_6)u_6\\
	(a_7-\la_7)u_7
	\ea\right)
	-
	\left(\ba{ccccc}
	k_{11}&k_{12}&0&0&0\\
	k_{12}&k_{22}&k_{23}&k_{26}&0\\
	0&k_{23}&k_{33}&0&k_{37}\\
	0&k_{26}&0&k_{66}&0\\
	0&0&k_{37}&0&k_{77}
	\ea\right)
	\left(\ba{l}
	u_1\\
	u_2\\
	u_3\\
	u_6\\
	u_7
	\ea\right)=-\left(\ba{l}\dis
	\nu_1u^2_1\\\dis
	\nu_2u^2_2\\\dis
	\nu_3u^2_3\\\dis
	\nu_6u^2_6\\\dis
	\nu_7u^2_7\\
	\ea\right),$$
	$$L_{\Om_2}-\la_{\Om_2}I:=\left(\ba{l}
	(a_4-\la_4)u_4\\
	(a_8-\la_8)u_8
	\ea\right)
	-
	\left(\ba{cc}
	k_{44}&k_{48}\\
	k_{48}&k_{88}
	\ea\right)
	\left(\ba{l}
	u_4\\
	u_8
	\ea\right)=-\left(\ba{l}\dis
	\nu_4u^2_4\\\dis
	\nu_8u^2_8
	\ea\right).$$
	
	\noindent\textbf{Case A}
	
	By assuming, for instance $a=(1,1.2,0.8,0.6,1,0.9,0.7,0.3)$, $\nu=(1,1,1,1,1,1,1,1),$ $\la=(2,-0.5,1.5,2,2,1.2,-0.3,-0.1)$ and
	$$k=\left(
	\ba{cccccccc}
	0.7 & 0.3 & 0 & 0 & 0 & 0 & 0 & 0\\
	0.3 & 0.4 & 0.2 & 0 & 0 & 0.1 & 0 & 0\\
	0 & 0.2 & 0.55 & 0 & 0 & 0 & 0.25 & 0\\
	0 & 0 & 0 & 0.65 & 0 & 0 & 0 & 0.35\\
	0 & 0 & 0 & 0 & 0 & 0 & 0 & 0\\
	0 & 0.1 & 0 & 0 & 0 & 0.9 & 0 & 0\\
	0 & 0 & 0.25 & 0 & 0 & 0 & 0.75 & 0\\
	0 & 0 & 0 & 0.35 & 0 & 0 & 0 & 0.65
	\ea
	\right),$$
	we can solve the equation for $u_5$ to obtain the nonnegative solutions $u_5=0$ and $u_5=1$. Furthermore, computing the principal eigenvalues for $\Omega_1$ and $\Omega_2$ yields
	\begin{itemize}
		\item $\la_p(L_{\Om_1}-\la_{\Om_1}I)\simeq-1.7308<0,$
		\item $\la_p(L_{\Om_2}-\la_{\Om_2}I)\simeq-2.1157<0.$
	\end{itemize}
	Therefore, according to Theorem \ref{TeoElo}, the problem admits unique strictly positive solutions on $\Om_1$ and $\Om_2$. 
	
	To verify this, we compute the sub and supersolutions method by solving a point-fixed problem in each step. Taking a constant supersolution as initialization, we obtain
	\begin{itemize}
		\item In $\Om_1:$ $\om_1:=(u_1,u_2,u_3,u_6,u_7)\simeq(1.787, 0.5181, 1.409, 1.2417, 0.4815)>0.$
		\item In $\Om_2:$ $\om_2:=(u_4,u_8)\simeq(2.2105,1.0134)>0$.
	\end{itemize}
	We represent the unique strictly positive solution:
	\begin{figure}[H]
		\centering
		\includegraphics[width=0.6\textwidth]{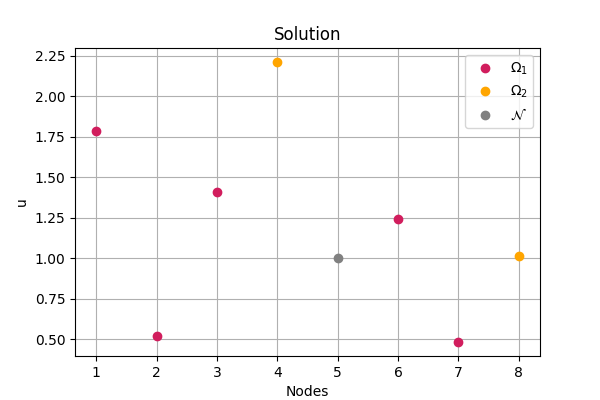}
		\caption{Strictly positive solution of the discrete logistic equation with a stationary Markov transition kernel. The domain is decomposed into equivalence classes with negative principal eigenvalue.}
	\end{figure}
	We observe that as previously mentioned, multiple nonzero solutions can be obtained by choosing $u_5$, $\om_1$ and $\om_2$ either as zero or as the strictly positive solution, with at least one of them being strictly positive.
	
	\noindent\textbf{Case B}
	
	To conclude this example, we now assume that $\la_4=\la_8=-1$, while $a$. $\nu$, $k$ and the remaining components of $\la$ are defined as in Case A. In this case, we obtain
	$$\la_p(L_{\Om_2}-\la_{\Om_2}I)\simeq0.4192>0.$$
	Solving the problem in $\Om_2$, we find
	$$\om_2=(u_4,u_8)\simeq(1.5656\times 10^{-10}, 2.3743\times 10^{-10})\simeq(0,0).$$
	Hence, when $\la_p(L_{\Om_2}-\la_{\Om_2})<0$, we have been able to obtain a strictly positive solution on $\Om_2$. In contrast, when $\la_p(L_{\Om_2}-\la_{\Om_2})>0$, the unique nonnegative solution is the null function. This behavior is fully consistent with Theorem \ref{TeoElo}.
	
	For the sake of clarity, Table \ref{tab:summary_results} collects the results for each of the regions discussed above across the analyzed cases.
	\begin{table}[h]
		\centering
		\setlength{\tabcolsep}{3pt}
		\renewcommand{\arraystretch}{1.1}
		\begin{tabular}{c|c|c|c}
			\textbf{Region} & \textbf{Case} & $\boldsymbol{\lambda_p}$ & \textbf{Choices} \\ \hline
			$\Omega_1$ & - & -1.7308 & $\big\{0,(1.787, 0.5181, 1.409, 1.2417, 0.4815)\big\}$ \\ 
			$\Omega_2$ & A & -2.1157 & $\big\{0,(2.2105,1.0134)\big\}$  \\ 
			$\Omega_2$ & B & +0.4192 & $\big\{0\big\}$  \\ 
			$\mathcal{N}$ & - & -1 & $\big\{0,1\big\}$  \\ 
		\end{tabular}
		\caption{Summary of the results in the different regions, depending on the sign of the principal eigenvalue in $\Omega_2$. We also report the corresponding sets of admissible choices leading to multiple positive and nontrivial solutions.}
		\label{tab:summary_results} % <--- Movido abajo del caption para que no falle el hipervínculo
	\end{table}
\end{example}
Another interesting case is the most commonly found in literature, which corresponds to the Lebesgue's measure and $\Om\subset\RR^N$. We study the following example:
\begin{example}[Continuous case]
	Let $\Omega=\mathbb{R}$. An interesting situation arises when 
	$k \in C^0(\overline{\Omega}\times\overline{\Omega})$ 
	is a symmetric convolution kernel supported in $(-c,c)$, with $c>0$. 
	For $0<s<c$, we define
	\beq\label{convker}
	k(x,y)=J(x-y):=\frac{3}{4}\frac{\big(s^2-(x-y)^2\big)^+}{s^3},
	\eeq
	which has unit integral and decreases with the distance of $x$ and $y$.
	
	Moreover, we assume $\gamma>0$, $\lambda \in C^0(\overline{\Omega})$, 
	and $a \in C^0(\overline{\Omega})$ given by
	\beq\label{contlamu}
	\lambda(x)=\frac{\gamma}{1+\gamma x^2}, 
	\qquad 
	a(x)=\hat{k}(x)=\int_{\Omega} k(x,y)\,dy=1,
	\qquad 
	\nu(x)=1 
	\quad \text{for all } x\in\overline{\Omega}.
	\eeq
	This $\la$ concentrates the population's growth rate around $x=0$.
	Within this framework, we consider the problem
	\beq\label{contlogeq}
	Lu:=\big(1-\lambda(x)\big)u(x)
	-\int_{\Omega} J(x-y)u(y)\,dy
	=-u(x)^2.
	\eeq
	
	Since every point of the domain is connected through the support of the kernel, 
	the maximum principle holds. In the particular case where $c=1$, $s=0.01$ and $\ga=20$, computing the principal eigenvalue  through a power iteration scheme, we obtain
	\[
	\lambda_p(L)\simeq -19.938 <0.
	\]
	
	Applying the sub-- and supersolution method, initialized with a constant 
	supersolution, we obtain the iterations and the corresponding solution 
	depicted in Figure~\ref{fig:continuous_example}.
	
	\begin{figure}[H]
		\centering
		\includegraphics[width=0.49\textwidth]{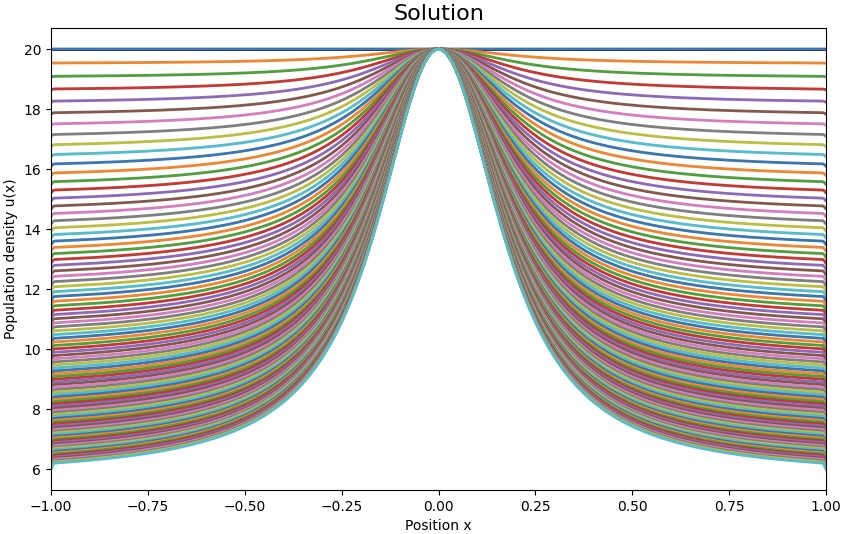}
		\hfill
		\includegraphics[width=0.49\textwidth]{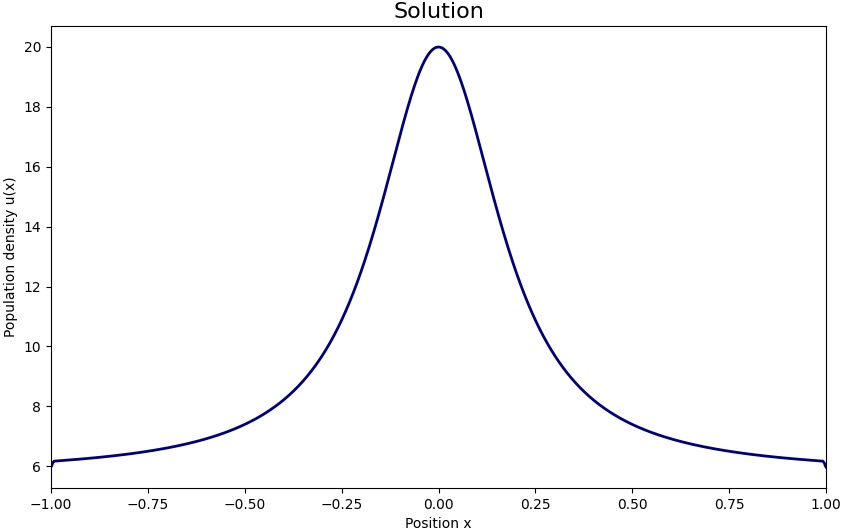}
		\caption{Sub- and supersolution iterations starting from a constant supersolution of the problem defined in \eqref{convker}--\eqref{contlogeq}, together with the resulting strictly positive solution.}
		\label{fig:continuous_example}
	\end{figure}
	
	The method converges to a strictly positive solution, in agreement with 
	the sign of the principal eigenvalue. Moreover, as it was hoped, the population's density is concentrated where $\la$ takes larger values.
	
	We repeat the test with the same linear operator $L$ and the same values for $c$, $s$ and $\ga$ to illustrate, for instance, the influence of the parameter $p$ for the logistic equation
	$$Lu=-u^p$$
	We have already studied the case where $p=2$. We compute the method of sub and supersolutions for larger values of $p$ and we compare the results obtained by representing them in the following graphs:
	
	\begin{figure}[H]
		\centering
		\includegraphics[width=0.49\textwidth,height=4.75cm]{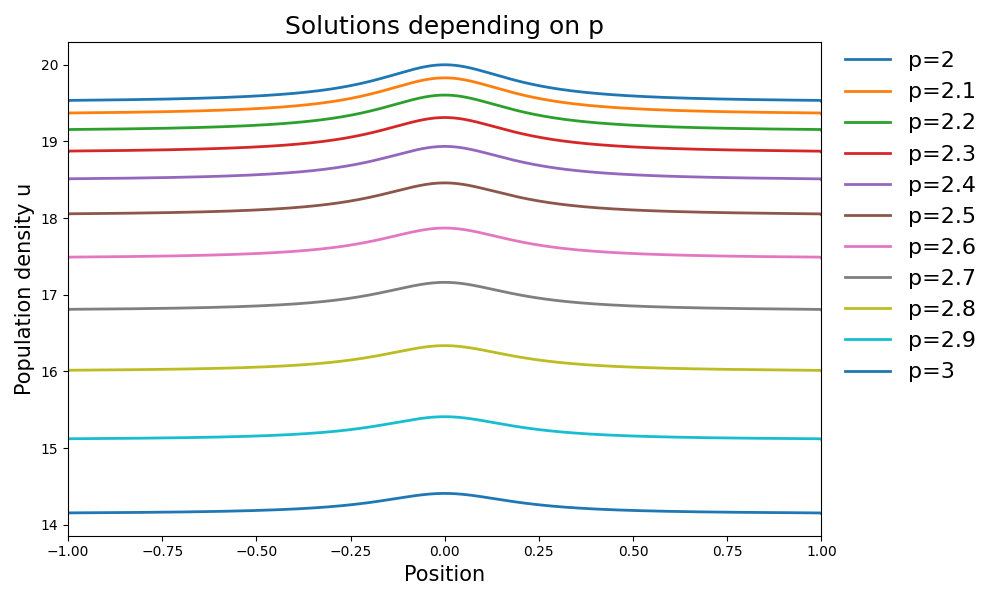}
		\hfill
		\includegraphics[width=0.49\textwidth,height=4.755cm]{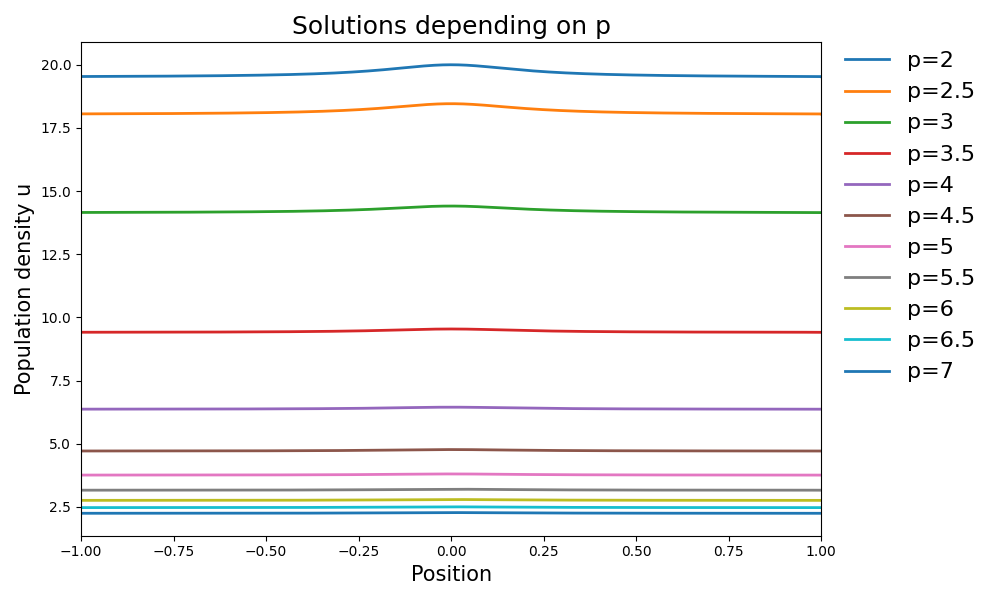}
		\caption{Comparison of solutions of the logistic equation for varying values of the parameter $p$. The left figure corresponds to $p=2,2.1,2.2,\ldots,3$, while the figure panel corresponds to $p=2,2.5,3,\ldots,7$.}
		\label{fig:continuous_example2}
	\end{figure}
	We observe the solutions are smaller the larger the value of $p$ chosen.
\end{example}
Another interesting situation in our framework is the interaction between species living in domains with different measures. We highlight two illustrative cases:
\begin{example}[A natural reserve divided by a river]
	We consider $\Om=(-1,1)\times(0,1)$ as an ecosystem (such as a natural reserve) divided by a resource supplier, a river for example, defined by $\Ga=\{(x,y)\in\Om:x=0\}$. Let $\mu$ be the Radon measure given by
	$$\mu=\mathcal{H}_{1|\Ga}+\mathcal{H}_2,$$
	where $\mathcal{H}_{1|\Ga}$ is the one-dimensional Hausdorff measure restricted to $\Ga$ and $\mathcal{H}_2$ is the two-dimensional Hausdorff measure.
	
	Moreover, we assume $k\in L_{\mu\otimes\mu}^\infty(\Om\times\Om)$ defined as
	\beq\label{kernel_rio}
	k\big((x_1,x_2),(y_1,y_2)\big)=\left\{
	\ba{l}
	k_1\big((x_1,x_2),(y_1,y_2)\big),\quad\hbox{if }x_1\neq0,y_1\neq0,\\\ecart\dis
	k_2\big((x_1,x_2),(0,y_2)\big),\quad\hbox{if }x_1\neq0,y_1=0,\\\ecart\dis
	k_3\big((0,x_2),(0,y_2)\big),\quad\hbox{if }x_1=0,y_1=0.
	\ea
	\right.
	\eeq
	
	where $k_1\in  L_{\mathcal{H}_2\otimes \mathcal{H}_2}^\infty(\Om\times\Om)$, $k_2\in L_{\mathcal{H}_2\otimes \mathcal{H}_{1|\Ga}}^\infty(\Om\times\Gamma)$, $k_3\in L_{\mathcal{H}_{1|\Ga}\otimes \mathcal{H}_{1|\Ga}}^\infty(\Ga\times\Ga)$.
	
	Let $\la\in L^\infty_\mu(\Om)$, $\nu\in L^\infty_\mu(\Om)$ with $\mu\big(\{\nu=0\}\big)=0$, we look for $u\in L^2_\mu(\Om)$ solution of
	\beq\label{rio_sist1}
	\left\{
	\ba{c}
	\dis\int_{\RR^2} k(x,y)(u(x)-u(y))d\mu(y)=\la u-\nu u^2,\quad x\in\Om,\\\ecart\dis
	u = 0 \ \hbox{ in }\ \RR^2\setminus\Om.
	\ea
	\right.
	\eeq 
	We can write $\la,\nu$ as
	$$\la=\chi_{\Ga}\la_\Ga+\chi_{\Om\setminus\Ga}\la_\Om,\quad\nu=\chi_{\Ga}\nu_\Ga+\chi_{\Om\setminus\Ga}\, \nu_\Om.$$
	Problem \eqref{rio_sist1} is equivalent to find $u\in L_{\mathcal{H}_2}^2(\Om)$, $\tilde u\in L_{\mathcal{H}_{1|\Ga}}^2(\Ga)$ such that
	\beq\label{rio_sist2}
	\left\{
	\ba{l}
	\dis\int_{\RR^2} k_1(x,y)(u(x)-u(y))d\mathcal{H}_2(y)\\\ecart\dis
	\hskip2cm+\int_{\RR}k_2(x,y)(u(x)-\tilde u(y))d\mathcal{H}_{1|\Ga}(y)=\la_\Om u-\nu_\Om u^2,\quad x\in\Om,\\\ecart\dis
	\dis\int_{\RR^2} k_2(y,x)(\tilde u(x)-u(y))d\mathcal{H}_2(y)\\\ecart\dis
	\hskip2cm+\int_{\RR}k_3(x,y)(\tilde u(x)-\tilde u(y))d\mathcal{H}_{1|\Ga}(y)=\la_{\Ga}u-\nu_\Ga u^2,\quad x\in\Ga,\\\ecart\dis
	u = 0 \ \hbox{ in }\ \RR^2\setminus\Om,\quad \tilde u = 0 \ \hbox{ in }\ \RR\setminus\Ga.
	\ea
	\right.
	\eeq 	
	For the numerical tests, we consider positive parameters 
	$\alpha$, $\beta$, $\gamma$, $a$, $b$, and $c$. 
	The kernels $k_1$, $k_2$, and $k_3$ in \eqref{kernel_rio} are chosen as
\begin{equation}\label{kernel_rio2}
			\begin{aligned}
					\dis k_1\big((x_1,x_2),(y_1,y_2)\big) &= e^{\dis-\alpha\big((x_1-y_1)^2+(x_2-y_2)^2\big)},\\\ecart\dis
					k_2\big((x_1,x_2),(0,y_2)\big) &= e^{\dis-\beta\big(x_1^2+(x_2-y_2)^2\big)},\\\ecart\dis
					k_3\big((0,x_2),(0,y_2)\big) &= e^{\dis-\gamma(x_2-y_2)^2}.
				\end{aligned}
		\end{equation}
	
	Moreover, we assume $\nu_\Omega=\nu_\Gamma=\lambda_{\Gamma}=1$ and
	\[
	\lambda_\Omega(x,y)=\frac{1}{1+d x^2}, \qquad d>0.
	\]
	
	We fix the parameter values $\alpha=15$, $\beta=10$ and $\gamma=20$. We compute the solutions $u$ and $\tilde u$ of \eqref{rio_sist2} for 
	$d=1$, $d=5$, and $d=10$. The corresponding numerical results are shown in 
	Figures~\ref{fig:continuous_example3}, \ref{fig:continuous_example4}, and 
	\ref{fig:continuous_example5}, respectively.
	\begin{figure}[H]
			\centering
			\includegraphics[width=0.525\textwidth]{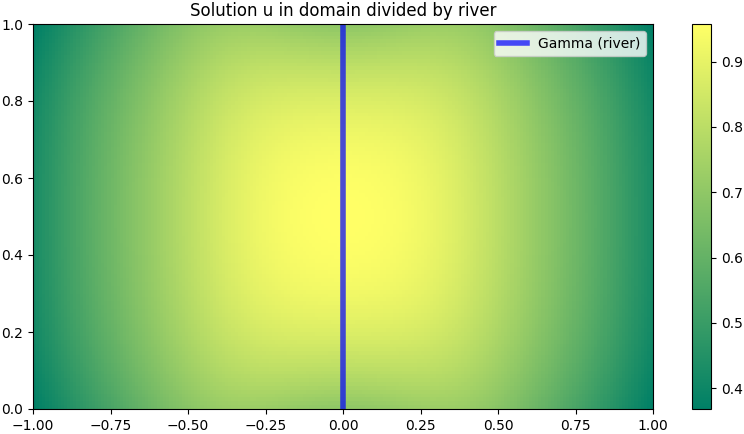}
			\hfill
			\includegraphics[width=0.465\textwidth]{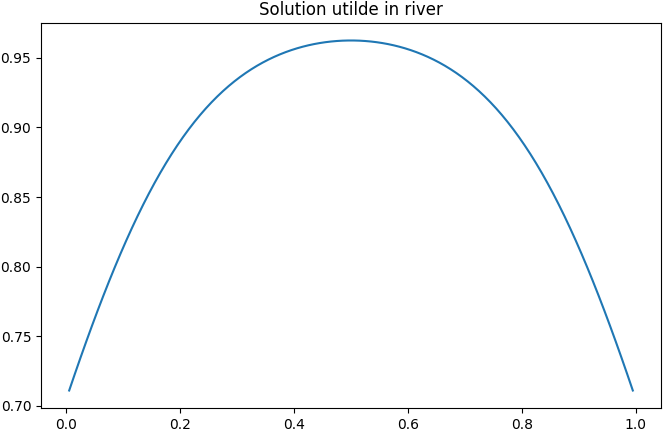}
			\caption{Solutions $u$ and $\tilde u$ of \eqref{rio_sist2} for $d=1$.}
			\label{fig:continuous_example3}
		\end{figure}
	\begin{figure}[H]
			\centering
			\includegraphics[width=0.525\textwidth]{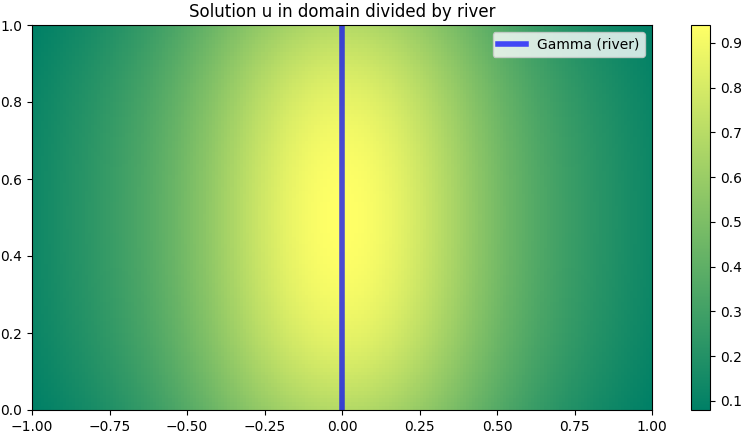}
			\hfill
			\includegraphics[width=0.465\textwidth]{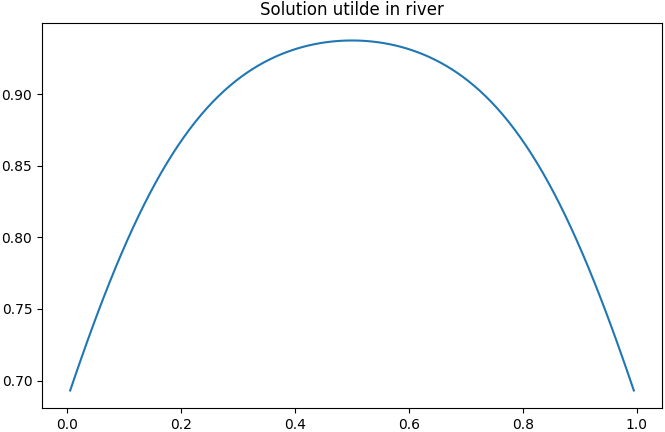}
			\caption{Solutions $u$ and $\tilde u$ of \eqref{rio_sist2} for $d=5$.}
			\label{fig:continuous_example4}
		\end{figure}
	\begin{figure}[H]
			\centering
			\includegraphics[width=0.525\textwidth]{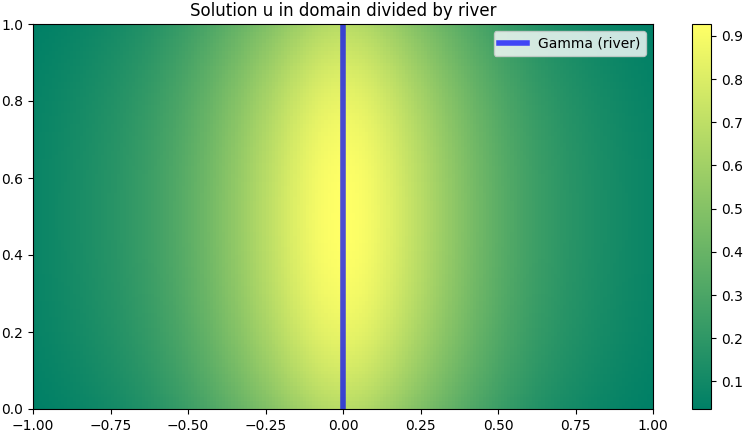}
			\hfill
			\includegraphics[width=0.465\textwidth]{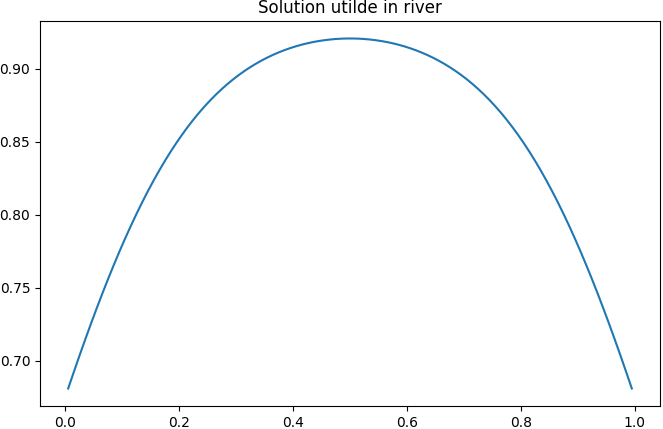}
			\caption{Solutions $u$ and $\tilde u$ of \eqref{rio_sist2} for $d=10$.}
			\label{fig:continuous_example5}
		\end{figure}
	We observe that, since the parameters have been chosen so that the principal eigenvalue is negative, the computed solutions are strictly positive.
	
	Moreover, the largest values of the solution in $\Omega$ are attained in the vicinity of the river. This is expected, as the river is taken into account in the model despite having lower dimensionality. This behaviour is consistent with ecological observations: species typically concentrate near rivers, where resources are more abundant. 
	
	Furthermore, due to the definition of $\lambda_\Omega$, increasing the parameter $d$ reduces the size of the region around the river where the species density is highest. 
	As a consequence, the overall magnitude of the solutions $u$ and $\tilde u$ decreases slightly as $d$ increases.
\end{example}

To finish we present a last example:
\begin{example}[Two cities in a rural area]
	We consider the domain $\Om=(-1,1)\times(0,1)$ which represents an ecosystem with two resource suppliers located at the points $\mathcal{C}_1=(-0.75,0.25)$ and $\mathcal{C}_2=(0.75,0.75)$. For instance, this situation may model two cities in a rural area, a desert with two oases, a natural reserve with two ponds, etc. Throughout this example, we refer to $\mathcal{C}_1$ and $\mathcal{C}_2$ as cities. 
	
	Let $\mu$ be the Radon measure given by
	$$\mu=\mu_{\mathcal{C}_1}+\mu_{\mathcal{C}_2}+\mathcal{H}_2,$$
	where $\mu_{\mathcal{C}_i}(\{\mathcal{C}_i\})=1$, $i=1,2$, and 0 elsewhere. As before, $\mathcal{H}_2$ denotes the two-dimensional Hausdorff measure.
	
	Moreover, we assume that $k\in L_{\mu\otimes\mu}^\infty(\Om\times\Om)$ is defined as
	\beq\label{kernel_ciudades}
	k(x,y)=\left\{
	\ba{l}
	k_1(x,y),\quad\hbox{if }x,y\notin\{\mathcal{C}_1,\mathcal{C}_2\},\\\ecart\dis
	k_2(x,\mathcal{C}_1),\quad\hbox{if }x\notin\{\mathcal{C}_1,\mathcal{C}_2\},\ y=\mathcal{C}_1,\\\ecart\dis
	k_3(x,\mathcal{C}_2),\quad\hbox{if }x\notin\{\mathcal{C}_1,\mathcal{C}_2\},\ y=\mathcal{C}_2,\\\ecart\dis
	k_{11},\quad\hbox{if }x=y=\mathcal{C}_1,\\\ecart\dis
	k_{22},\quad\hbox{if }x=y=\mathcal{C}_2,\\\ecart\dis
	k_{12},\quad\hbox{if }x=\mathcal{C}_1,\quad y=\mathcal{C}_2.
	\ea
	\right.
	\eeq	
	where $k_1\in L_{\mathcal{H}_2\otimes \mathcal{H}_2}^\infty(\Omega\times\Omega)$ is symmetric, 
	$k_2\in L_{\mathcal{H}_2\otimes \mu_{\mathcal{C}_1}}^\infty(\Omega\times\{\mathcal{C}_1\})$, 
	$k_3\in L_{\mathcal{H}_2\otimes \mu_{\mathcal{C}_2}}^\infty(\Omega\times\{\mathcal{C}_2\})$, 
	and $k_{11},k_{12},k_{22}>0$.
	
	As in the previous example, let 
	$\lambda\in L^\infty_\mu(\Omega)$ and 
	$\nu\in L^\infty_\mu(\Omega)$ with 
	$\mu\big(\{\nu=0\}\big)=0$. 
	We look for a function $u\in L^2_\mu(\Omega)$ satisfying
	\beq\label{ciudades_sist1}
	\left\{
	\ba{c}
	\dis\int_{\RR^2} k(x,y)(u(x)-u(y))d\mu(y)=\la u-\nu u^2,\quad x\in\Om,\\\ecart\dis
	u = 0 \ \hbox{ in }\ \RR^2\setminus\Om.
	\ea
	\right.
	\eeq 
	We can decompose $\lambda$ and $\nu$ as
	$$\la=\chi_{\mathcal{C}_1}\la_{\mathcal{C}_1}+\chi_{\mathcal{C}_2}\la_{\mathcal{C}_2}+\chi_{\Om\setminus\{\mathcal{C}_1,\mathcal{C}_2\}}\la_\Om,\quad\nu=\chi_{\mathcal{C}_1}\nu_{\mathcal{C}_1}+\chi_{\mathcal{C}_2}\nu_{\mathcal{C}_2}+\chi_{\Om\setminus\{\mathcal{C}_1,\mathcal{C}_2\}}\, \nu_\Om.$$
	Problem \eqref{ciudades_sist1} is equivalent to finding 
	$u\in L^2_{\mathcal{H}_2}(\Omega)$ together with scalar unknowns 
	$\tilde u_1,\tilde u_2\in\mathbb{R}$ corresponding to the atomic components of the measure,
	such that the following coupled system holds:
	\beq\label{ciudades_sist2}
	\left\{
	\ba{l}
	\dis\int_{\RR^2} k_1(x,y)(u(x)-u(y))d\mathcal{H}_2(y)+k_2(x,\mathcal{C}_1)(u(x)-\tilde u_1)\\\ecart\dis
	\hskip3cm+k_3(x,\mathcal{C}_2)(u(x)-\tilde u_2)=\la_\Om u-\nu_\Om u^2,\quad x\in\Om,\\\ecart\dis
	\dis\int_{\RR^2} k_2(y,\mathcal{C}_1)(\tilde u_1-u(y))d\mathcal{H}_2(y)+k_{12}(\tilde{u}_1-\tilde{u}_2)=\la_{\mathcal{C}_1}\tilde u_1-\nu_{\mathcal{C}_1}\tilde u_1^2,\\\ecart\dis
	\dis\int_{\RR^2} k_3(y,\mathcal{C}_2)(\tilde u_2-u(y))d\mathcal{H}_2(y)+k_{12}(\tilde{u}_2-\tilde{u}_1)=\la_{\mathcal{C}_2}\tilde u_2-\nu_{\mathcal{C}_2}\tilde u_2^2,\\\ecart\dis
	u = 0 \ \hbox{ in }\ \RR^2\setminus\Om.
	\ea
	\right.
	\eeq 
	For the numerical tests, we consider positive parameters 
	$\alpha$, $\beta$, $\gamma$, $a$, $b$, and $c$. 
	The kernels $k_1$, $k_2$, and $k_3$ in \eqref{kernel_ciudades} are chosen as
	\begin{equation}\label{kernel_ciudades2}
		\begin{aligned}
			k_1\big((x_1,x_2),(y_1,y_2)\big) &= \alpha\big(a^2-(x_1-y_1)^2-(x_2-y_2)^2\big),\\
			k_2\big((x_1,x_2),(-0.75,0.25)\big) &= \beta\big(b^2-(x_1+0.75)^2-(x_2-0.25)^2\big),\\
			k_3\big((x_1,x_2),(0.75,0.75)\big) &= \ga\big(c^2-(x_1-0.75)^2-(x_2-0.75)^2\big).
		\end{aligned}
	\end{equation}
	
	Moreover, we assume $\nu_\Omega=\nu_{\mathcal{C}_1}=\nu_{\mathcal{C}_2}=\la_\Om=1$ and fix the parameters $\alpha=1$, $\beta=1.5$, $\gamma=1.5$, $a=0.8$.
	
	Firstly, we perform some tests where we fix $b=c=0.5$ and we study the solutions for different values of $\la_{\mathcal{C}_1}$ and $\la_{\mathcal{C}_2}$:
	\begin{figure}[H]
		\centering
		\includegraphics[width=0.49\textwidth]{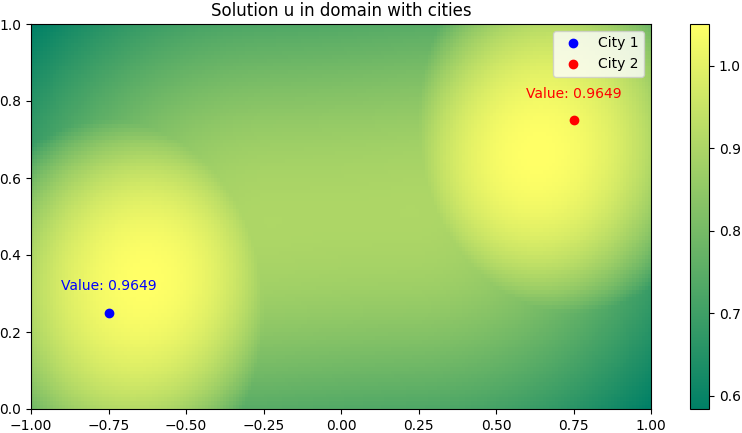}
		\hfill
		\includegraphics[width=0.49\textwidth]{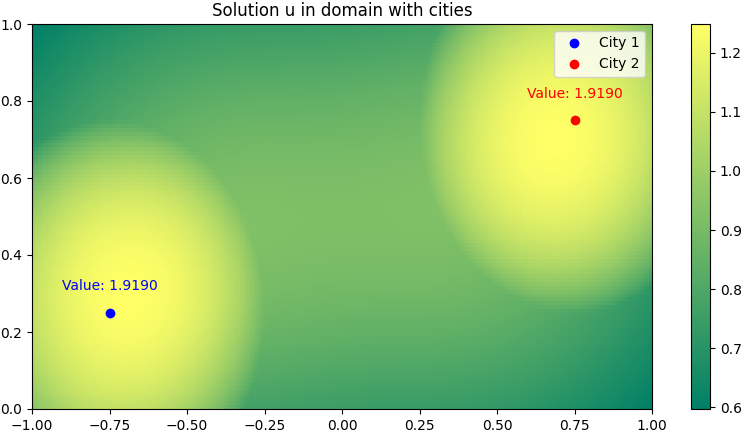}
		\caption{Solutions $u$, $\tilde u_1$ and $\tilde u_2$ of \eqref{ciudades_sist2} for $\la_{\mathcal{C}_1}=\la_{\mathcal{C}_2}=1$ and $\la_{\mathcal{C}_1}=\la_{\mathcal{C}_2}=2$.}
		\label{fig:continuous_example6}
	\end{figure}
	As in the previous example, the population concentrates around the two cities, even when  $\la_{\mathcal{C}_1}=\la_{\mathcal{C}_2}=\la_\Om=1$. Moreover, since $b=c$, $\beta=\ga$ and $\la_{\mathcal{C}_1}=\la_{\mathcal{C}_2}$, the solutions at both cities are identical, as expected.
	
	When $\la_{\mathcal{C}_1}=\la_{\mathcal{C}_2}=2$, the solutions in the cities increase, and the population density in $\Om$ surrounding the cities also rises. This reflects the higher influence of the cities, which provide more resources to the ecosystem.
	
	Now, we repeat the simulations for $\la_{\mathcal{C}_1}\neq\la_{\mathcal{C}_2}$, which represents the scenario where one city is more influential than the other.
	\begin{figure}[H]
		\centering
		\includegraphics[width=0.49\textwidth]{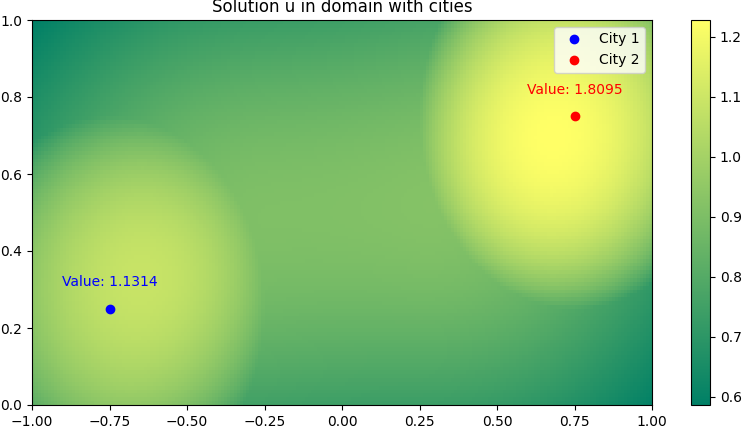}
		\hfill
		\includegraphics[width=0.49\textwidth]{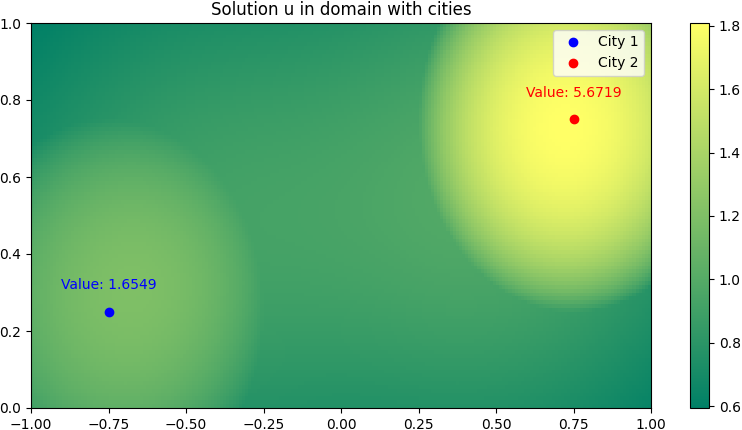}
		\caption{Solutions $u$, $\tilde u_1$ and $\tilde u_2$ of \eqref{ciudades_sist2} for $\la_{\mathcal{C}_2}=2$ when $\la_{\mathcal{C}_1}=1$ and $\la_{\mathcal{C}_1}=6$.}
		\label{fig:continuous_example7}
	\end{figure}
	As expected, the population density increases in the more influential city, while the less influential city supports a comparatively smaller population. The surrounding area in $\Om$ also reflects this contrast: regions near the stronger city exhibit higher population density, whereas regions near the weaker city remain less populated.
	
	Furthermore, we observe that if $\la_{\mathcal{C}_1}=6$, then $\tilde u_1$ is much larger than in the previous, since the contrast between both cities' influence is more noticeable. This also leads to a slightly larger value for $\tilde u_1$, since more individuals from city 2 can arrive to city 1.
	
	Finally, we fix $\la_{\mathcal{C}_1}=\la_{\mathcal{C}_2}=2$ and we compute the solutions for $b\neq c$. We highlight that these parameters represent the  maximum distance between points in the support of $k_2$ and $k_3$ relative to $\mathcal{C}_1$ and $\mathcal{C}_2$, respectively:
	\begin{figure}[H]
		\centering
		\includegraphics[width=0.49\textwidth]{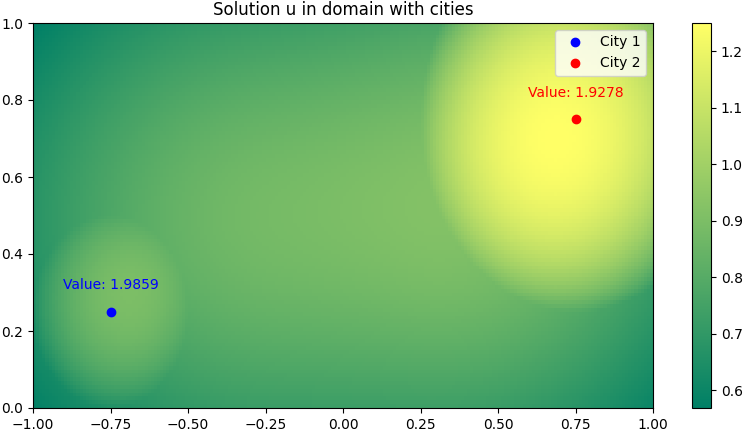}
		\hfill
		\includegraphics[width=0.49\textwidth]{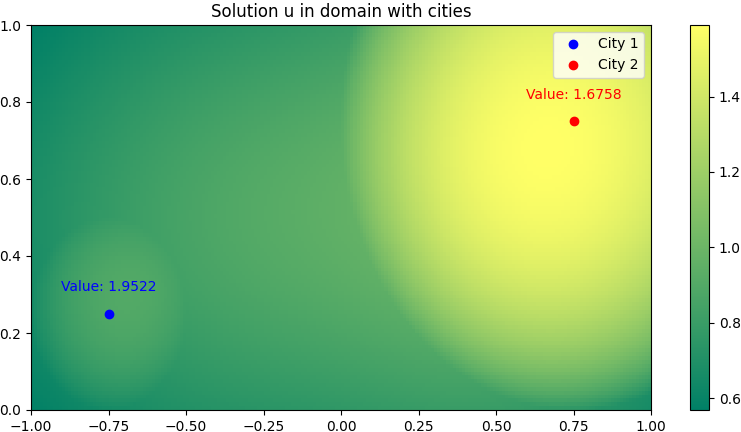}
		\caption{Solutions $u$, $\tilde u_1$ and $\tilde u_2$ of \eqref{ciudades_sist2} for $b=0.25$ when $c=0.5$ and $c=0.75$.}
		\label{fig:continuous_example8}
	\end{figure}
%	In both cases, $\tilde{u}_1$ is larger than when $b=0.5$, since the density of individuals in the surroundings of $\mathcal{C}_1$ decreases—there are fewer places to go—so they concentrate in the city. Another consequence is that individuals in $\Om$ who previously lived around $\mathcal{C}_1$ now inhabit the surroundings of $\mathcal{C}_2$. In said point, in the case $c=0.5$, the solution $\tilde u_2$ is slightly larger than in the case when $b=c=0.5$.
	
	However, for $c=0.75$, individuals tend to disperse more around $\mathcal{C}_2$. As a result, the density decreases in $\mathcal{C}_2$ itself, but increases within a neighborhood of radius $c$ around the city. This can model, for instance, towns located in the metropolitan area of a city that cannot sustain a higher population density.
\end{example}

In summary, these diverse numerical experiments successfully validate our theoretical findings for the logistic case, demonstrating the versatility of our framework to handle continuous, discrete, and highly complex hybrid configurations. This motivates the exploration of other types of restricted population growths within the same measure-theoretical setting, such as the classical model analyzed in the next subsection.

\subsection{Nonlocal Gompertz equation}
While the quadratic regulation of the logistic equation is widely used, many biological systems exhibit growth rates that slow down exponentially at high densities, or populations that are highly sensitive to overcrowding even at low saturation levels. To address these dynamics—frequently observed in restricted ecological habitats and tumor growth kinetics—we now verify that the nonlocal Gompertz growth model fits naturally into our framework. Specifically, we consider
 $\lambda\in L^\infty(\Omega)$ and $\nu\in L^\infty(\Omega)$ such that
 \beq\label{lamuestpos}
 \operatorname*{ess\,inf}_{x\in\Omega} \lambda(x):=\la_0 > 0,
 \qquad
 \operatorname*{ess\,inf}_{x\in\Omega} \nu(x) > 0.
 \eeq
 
 Let $F:\Omega\times[0,+\infty)\to\mathbb{R}$ be defined by
 \beq\label{defFGom}
 F(x,u)=
 \begin{cases}
 	\lambda(x)\,u\log\!\left(\dfrac{\nu(x)}{u}\right), & \text{if } u>0,\\[6pt]
 	0, & \text{if } u=0.
 \end{cases}
 \eeq
 This function has been continuously extended at $u=0$ so that $F$ is well defined and continuous on $[0,+\infty)$. Thus, it satisfies \eqref{caratheodory}.
 
 We consider the following problem of the form \eqref{geneq2}:
  \beq\label{gompertz}Lu=F(x,u),\quad u\geq0,\ \hbox{ a.e. in }\Om,\eeq
 which corresponds to the nonlocal Gompertz equation. In these conditions, we show there exists a unique strictly positive solution of \eqref{gompertz}.
 \begin{theorem}\label{TeoEGom}
 	There exists a unique strictly positive solution of \eqref{gompertz} in each $\Om_l\subset\Om$, $l\geq1$.
 \end{theorem}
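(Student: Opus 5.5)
The plan is to reduce Theorem \ref{TeoEGom} to Theorem \ref{thexistence}, applied separately on each class $\Om_l$. Assumption \eqref{hipstromaxprin} lets us work on a single class, so the strong maximum principle of Corollary \ref{ppmafes} is available there. It then suffices to check hypotheses \eqref{Fnoneg0}, \eqref{hipsubsol}, \eqref{hipsupsol} and \eqref{hipunicidad} for the Gompertz nonlinearity \eqref{defFGom}. The existence of a strictly positive solution and its uniqueness then follow directly from that theorem.

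The easy hypotheses come first. Condition \eqref{Fnoneg0} holds because $F(x,0)=0$ by definition. For \eqref{hipunicidad}, for $s>0$ we have
\[
\frac{F(x,s)}{s}=\lambda(x)\big(\log\nu(x)-\log s\big).
\]
Since $\lambda\geq\la_0>0$, this is strictly decreasing in $s$, which gives \eqref{hipunicidad} at once. For \eqref{hipsupsol}, fix $M>0$ and let $\overline\nu=\|\nu\|_{L^\infty_\mu(\Om)}$. Then $F(x,s)/s\leq\la_0(\log\overline\nu-\log s)$ wherever $\log\overline\nu-\log s<0$. Indeed, for $s\geq\overline\nu$ the factor $\log(\nu/s)$ is nonpositive, so $\lambda\log(\nu/s)\leq\la_0\log(\nu/s)\leq\la_0\log(\overline\nu/s)$. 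Choosing $R(M)\geq\overline\nu\, e^{(M+1)/\la_0}$ therefore yields $F(x,s)<-Ms$ for $s\geq R$.

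The key step, and the one I expect to need care, is \eqref{hipsubsol}. The reason is that $F(x,s)/s\to+\infty$ as $s\to0^+$, while Theorem \ref{thexistence} requires $\alpha\in L^\infty_\mu(\Om)$ with $\la_p(L-\alpha I)<0$. I would take $\alpha\equiv C$ constant, with $C$ large. The second half of \eqref{hipsubsol} then holds for every $\ep>0$: we need $F(x,s)\geq (C-\ep)s$ for $0<s\leq\delta$. Using the bounds $\lambda\geq\la_0$ and $\nu\geq\nu_0:=\operatorname*{ess\,inf}\nu>0$, for $s<\nu_0$ we get
\[
F(x,s)/s\geq\la_0\log(\nu_0/s),
\]
so $\delta=\min\{\nu_0,\nu_0 e^{-C/\la_0}\}$ works uniformly in $x$. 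To get $\la_p(L-CI)<0$, note that $\la_p(L-CI)=\la_p(L)-C$. Here $\la_p(L)$ is finite because $L$ is bounded: by the Rayleigh quotient \eqref{defautovppal}, $\la_p(L)\leq\operatorname*{ess\,inf} a\leq\|a\|_{L^\infty_\mu(\Om)}$ by Theorem \ref{thautovautof}. So any $C>\la_p(L_{\Om_l})$ suffices. This is a real improvement over the logistic case, since for Gompertz no spectral condition is needed: the blow-up of $F(x,s)/s$ at $0$ makes $0$ always unstable. The remaining hypotheses of Theorem \ref{thexistence}, namely $\mu(\Om)<\infty$ and $k\in L^\infty$, are standing assumptions of the section.

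There is one technical point to watch. Theorem \ref{Teosubsup} needs the monotonicity condition \eqref{hipcrF} on $[\underline u,\overline u]$, and in the proof of Theorem \ref{thexistence} this enters through \eqref{defgammamon} with $s_1=0$ allowed. But $\partial_sF=\lambda(\log(\nu/s)-1)\to+\infty$ as $s\to0^+$, so $F$ is increasing near $0$, not decreasing. Hence $F(x,s_1)-F(x,s_2)\leq0$ for small $s_1<s_2$, and a finite $\ga$ still exists: $F(x,\cdot)$ is bounded below on compact sets, and it is locally Lipschitz away from $0$ only in the favorable direction. The quantity in \eqref{defgammamon} is bounded by $\sup$ of $(-\partial_s F)^+$ on $[0,R]$, which is at most $\|\lambda\|_\infty(1+\log(R/\nu_0))$. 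This is finite, so the sub–supersolution argument applies verbatim, and the strict positivity and uniqueness conclusions of Theorem \ref{thexistence} hold on each $\Om_l$.
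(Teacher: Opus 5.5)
Your proposal is correct and follows the paper's proof. Like the paper, you reduce to Theorem \ref{thexistence} on each $\Om_l$, using $F(x,0)=0$, the strict decrease of $F(x,s)/s$, and a constant $\alpha>\la_p(L)$ for \eqref{hipsubsol} (the paper takes $\alpha=\la_p(L)+1$); your use of $\operatorname*{ess\,inf}\nu$ and $\|\nu\|_{L^\infty_\mu(\Om)}$ to make $\delta(\ep)$ and $R(M)$ uniform in $x$, and your check that a finite $\ga$ in \eqref{defgammamon} exists, fill in details the paper leaves implicit.
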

 \begin{proof}
 	Our aim is to apply Theorem \ref{thexistence}. Clearly, \eqref{Fnoneg0} is satisfied since $F(x,0)=0$. Moreover, we have
 	$$\lim_{u\to\infty}\la u\log\left(\frac{\nu}{u}\right)=-\infty,\qquad\partial_u\left(\frac{F(x,u)}{u}\right)=-\frac{\la}{u}<0,\ \forall u>0.$$
 	Hence, \eqref{hipsupsol} and \eqref{hipunicidad} also hold.
 	
 	On other side, to prove \eqref{hipsubsol}, we consider $\alpha=\la_p(L)+1$, which satisfies $\la_p(L-\alpha I)<0$. Let $\ep>0$. We need to find $\delta(\ep)$ such that if $0<s\leq\delta$, then
 	$$F(x,s)\geq(\alpha-\ep)s\ \hbox{ a.e. }x\in\Om.$$
 	If $s=0$, it is trivial. For $s>0$, it suffices to show
 	$$\la_0\log\!\left(\dfrac{\nu(x)}{s}\right)\geq\alpha-\ep\Leftrightarrow\log\!\left(\dfrac{\nu(x)}{s}\right)\geq\frac{\alpha-\ep}{\la_0}.$$
 	Since
 	$$\lim_{s\to0^+}\log\left(\frac{\nu}{s}\right)=+\infty,$$
 	we have that $\forall M>0$ $\exists\tilde\delta(M)$ such that if $0<s<\tilde\delta$, then $\log\left(\nu/s\right)>M$. Therefore, taking $M>0$, $M>(\alpha-\ep)/\la_0$ and $\delta(\ep)=\tilde\delta(M)$, we conclude \eqref{hipsubsol}. We can finally apply Theorem \ref{thexistence} to deduce the result.
\end{proof}
As we did for the logistic equation, Theorem \ref{thregularity} can be applied to obtain the following regularity result
\begin{theorem}
	We assume \eqref{lamuestpos} and \eqref{defFGom} together with $\Om$ Hausdorff, locally compact, $\la\in C^0(\Om)$, $\nu\in C^0(\Om)$, $k\in C^0(\Om\times\Om)$ and $\mu$ a nonnegative Radon measure. We consider $\Om_l$, $l\geq 1$, the equivalence classes associated to the relation given by Definition \ref{defCEd1}. Then, in every $\Om_l\subset\Om$ such that there exists a positive solution $u$ in $\Om_l$, said solution satisfies $u\in C^0(\Om)$.
\end{theorem}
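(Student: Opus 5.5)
The plan is to mimic the proof of Theorem \ref{threglog}, i.e. to verify the hypotheses \eqref{hipco1}, \eqref{hipco2} and \eqref{intopos} of Theorem \ref{thregularity} for the Gompertz nonlinearity \eqref{defFGom}. We work with
$$G_2(x,s)=a(x)s-F(x,s)=a(x)s-\la(x)s\log\!\Big(\frac{\nu(x)}{s}\Big)=\big(a(x)-\la(x)\log\nu(x)\big)s+\la(x)s\log s,\quad s>0,$$
with $G_2(x,0)=0$. We note that $G_2$ is continuous on $\Om\times[0,\infty)$: $s\log s\to0$ as $s\to0^+$, and $a$, $\la$, $\nu$ are continuous with $\nu>0$. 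Here $a=a_0+\hat k$ is continuous because we are in the continuous setting, exactly as assumed in Theorem \ref{threglog}.

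To check \eqref{hipco1}, we fix $x$ and compute $\partial_s G_2(x,s)=a(x)-\la(x)\log\nu(x)+\la(x)(1+\log s)$. Since $\la(x)\geq\la_0>0$, this is strictly increasing in $s$, so $G_2(x,\cdot)$ is strictly convex on $(0,\infty)$ and $G_2(x,0)=0$. There are two cases.
\begin{itemize}
\item If $\partial_sG_2(x,\cdot)\geq0$ on $(0,\infty)$, then $G_2(x,\cdot)$ is strictly increasing.
\item Otherwise $G_2(x,\cdot)$ decreases on $(0,s_x]$, where $s_x=\nu(x)e^{-1-a(x)/\la(x)}$, and increases on $[s_x,\infty)$. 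Since $G_2(x,\cdot)$ is negative just after $0$, it stays negative until some $r_x>s_x$ and is strictly increasing afterwards.
\end{itemize}
In both cases $G_2(x,s)\to+\infty$ as $s\to\infty$, because $s\log s$ dominates. Hence for each $t>0$ there is exactly one $s\geq0$ with $G_2(x,s)=t$, which is \eqref{hipco1}.

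For \eqref{hipco2}, on a compact $K$ the functions $a$, $\la$, $\nu$ are bounded, $\nu$ is bounded below by a positive constant, and $\min_K\la\geq\la_0>0$. This gives
$$G_2(x,s)\geq -C_Ks+\la_0 s\log s\to\infty$$
uniformly on $K$ for $s$ large.

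Finally, the existence of a strictly positive solution $u$ on $\Om_l$ comes from Theorem \ref{TeoEGom}, or from the hypothesis directly. Since $\Om_l$ is an equivalence class for $\stackrel{\cal R'}\sim$, it is open, and every $x\in\Om_l$ has some $y$ with $k(x,y)>0$. By continuity of $k$ this holds on a neighbourhood of $y$, so \eqref{intopos} follows, provided that neighbourhood has positive $\mu$-measure. Then Theorem \ref{thregularity}, applied on $\Om_l$, gives continuity of $u$.

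The main obstacle I expect is this last point, \eqref{intopos}. Continuity of $k$ gives positivity on an open set, but we still need that set to carry positive $\mu$-measure, which may require a support assumption on $\mu$. I would handle it as in Theorem \ref{threglog}, using that $u>0$ a.e. in $\Om_l$ and that $\mu$ charges the open neighbourhood where $k(x,\cdot)>0$. A minor additional check is that $a\in C^0$ requires $\hat k$ to be continuous, which follows from $k\in C^0\cap L^\infty$ with $\mu(\Om)<\infty$ by dominated convergence.
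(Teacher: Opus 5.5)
Your proposal is correct and follows the paper's route. You reduce to Theorem \ref{thregularity}, verify \eqref{hipco1} from the decrease-then-increase shape of $a(x)s-F(x,s)$ around $s_x=\nu(x)e^{-(1+a(x)/\la(x))}$, and verify \eqref{hipco2} by a lower bound of the form $-C_Ks+\min_K\la\, s\log s$ on compacts; your first case is actually vacuous, since $\partial_s\big(a(x)s-F(x,s)\big)\to-\infty$ as $s\to0^+$. The caveat you raise about \eqref{intopos} is legitimate and is not resolved in the paper either, which simply asserts it in the proof of Theorem \ref{threglog}; it genuinely needs $\mu$ to charge every nonempty open subset of $\Om_l$ (e.g.\ $\mu$ of full support), so it is an implicit hypothesis of the paper's argument rather than a defect specific to yours.
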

\begin{proof}Our aim is to apply Theorem \ref{thregularity}. 
The proof is similar to the one given for Theorem \ref{threglog}, but in this case
$$a(x)s-F(x,s)=\left\{\ba{l}
\dis a(x)s-\la(x) s\log\left(\frac{\nu(x)}{s}\right),\quad\hbox{if }s>0,\\
0,\quad\hbox{if }s=0.
\ea\right.$$
This function decreases in $\Big(0,\nu(x)e^{-\left(1+\frac{a(x)}{\la(x)}\right)}\Big)$ and increases in $\Big(\nu(x)e^{-\left(1+\frac{a(x)}{\la(x)}\right)},\infty\Big)$. Since it is zero in $s=0$, it is increasing and therefore injective, where it is positive. Condition \eqref{hipco2} is also easy to see taking into account that in every compact $K\subset\Om_l$ it holds
$$\ba{l}
as-\la s\log\left(\frac{\nu}{s}\right)=as-\la s\log(\nu)+\la s\log(s)\\\ecart\dis
\quad \qquad \qquad\qquad\geq\min_K (a)s-\max_K(\la\log(\nu))s+\min_K(\la)s\log(s)\to\infty,\ \hbox{ if }s\to\infty.
\ea$$
\end{proof}
\section*{Acknowledgements}
ACD, MMB and AS have been partially supported by Ministry of Science, Innovation and Universities of Spain under research project PID2023-149509NB-I00. AS also has been supported by University of Sevilla under grants SOL2024-31708 and SOL2024-31596.
The authors thank IMUS-María de Maeztu grant CEX2024-001517-M - Apoyo a
Unidades de Excelencia María de Maeztu for supporting this research, funded by
MICIU/AEI/ 10.13039/501100011033
\end{document}